\newcommand{\vel}{\bm{v}}
\newcommand{\p}{\mathcal{P}}
\newcommand{\R}{\mathcal{R}}
\newcommand{\Q}{\mathcal{Q}}
\newcommand{\E}{\mathcal{E}}
\newcommand{\con}{\bm{U}}
\newcommand{\Old}[1]{} % { #1}
\newcommand{\BNK}[2]{\textcolor{magenta}{{#1}}}
\newcommand{\prim}{\bm{Q}}
\def\lf{{\text{\sc l}}}
\def\rg{{\text{\sc r}}}
\def\En{E}
\def\Pres{\text{\sc p}}
\newcommand{\conl}{\con_{\lf}}
\newcommand{\conr}{\con_{\rg}}
\newcommand{\f}{\bm{F}}
\newcommand{\fx}{\bm{F}_1}
\newcommand{\fy}{\bm{F}_2}
\newcommand{\m}{\bm{m}}
\newcommand{\s}{\bm{S}}
\newcommand{\D}{\bm{D}}
\newcommand{\B}{\bm{B}}
\newcommand{\Diss}{\mathcal{D}}
\newcommand{\A}{\bm{A}}
\newcommand{\dt}{\Delta t}
\newcommand{\dx}{\Delta x}
\def\bt{\text{b}} % bottom topography
\def\coefa{\text{\sc a}} % used in eigenvalue
\def\coefb{\text{\sc c}} % used in eigenvalue
\def\newInv{\beta}
\newcommand{\uad}{\mathcal{U}_{\textrm{ad}}}
\newcommand{\detP}{\textrm{det}(\p)}
\newcommand{\hz}{\hat{z}}
\newcommand{\tz}{\tilde{z}}
\newcommand{\Afun}{\color{red}\mathfrak a\color{black}}
\newcommand{\Bfun}{\color{red}\mathfrak b\color{black}}
\newtheorem{theorem}{Theorem}
\newtheorem{lemma}{Lemma}
\newtheorem{remark}{Remark}
\title{Exact solution for Riemann problems of the shear shallow water model}
\date{}
\begin{document}
\author{Boniface Nkonga\footnote{Universit\'e C\^ote d'Azur, INRIA, CNRS and LJAD, 06108 Nice Cedex 2, France. email: {\tt boniface.nkonga@unice.fr}} \ and Praveen Chandrashekar\footnote{Centre for Applicable Mathematics, Tata Institute of Fundamental Research, Bangalore--560065, India. email: {\tt praveen@math.tifrbng.res.in}}}

%%% For elsarticle
%\begin{frontmatter}
%\title{Shear shallow water model: Entropy consistency and exact Riemann solver}
%%\author[1]{Praveen Chandrashekar}
%%\ead{praveen@math.tifrbng.res.in}
%%\author[2]{Boniface Nkonga}
%%\ead{boniface.nkonga@unice.fr}
%%\author[1]{Asha Kumari Meena}
%%\ead{asha@tifrbng.res.in}
%%\author[2]{Ashish Bhole}
%%\ead{ashish.bhole@unice.fr}
%%\address[1]{TIFR Centre for Applicable Mathematics, Bangalore-560065, India.}
%%\address[2]{Universit\'e C\^ote d'Azur, Inria, CNRS, LJAD, 06108 Nice Cedex 2, France.}
%\begin{abstract}
%\end{abstract}
%\begin{keyword}
%Shear shallow water model, non-conservative system, path conservative scheme,  approximate Riemann solver, finite volume method.
%\end{keyword}
%\end{frontmatter}

%\linenumbers
\maketitle
%-------------------------------------------------------------------------------
\begin{abstract}
The shear shallow water model is a higher order model for shallow flows which includes some shear effects that are neglected in the classical shallow models. The model is a non-conservative hyperbolic system which can admit shocks, rarefactions, shear and contact waves. The notion of weak solution is based on a path but the choice of the correct path is not known for this problem. In this paper, we construct exact solution for the Riemann problem assuming a linear path in the space of conserved variables, which is also used in approximate Riemann solvers. We compare the exact solutions with those obtained from a path conservative finite volume scheme on some representative test cases.
\end{abstract}
{\bf Keywords}:
Shear shallow water model, non-conservative system, path conservative scheme,  approximate Riemann solver, finite volume method.
%-------------------------------------------------------------------------------
\section{Introduction}
In the present paper we investigate the solutions of Riemann problems for a non-linear, non-conservative hyperbolic system of equations arising in the modeling of shear shallow water (SSW) flows. In the framework of non-conservative hyperbolic systems, the notion of weak solutions and associated jump conditions need to be revisited. Indeed, in this context, we have to deal with non-classical multiplication of distributions that prevent unique derivation of jump conditions. The path-conservative approach is now a useful tool for numerical approximation of non-conservative hyperbolic systems. The main principle behind this approach is to define the weak solution by assuming some path between two states and derive generalized jump conditions.  The paper~\cite{Volpert1967} is the first to formulate a meaning to non-conservative products using Borel measures. In~\cite{DalMaso1995}, the notion of path is introduced and generalizes the results of~\cite{Volpert1967}. The first numerical applications resulting from these theoretical analyses are realized in~\cite{Toumi1992} in the context of Roe scheme for real gases and two-phase flow model, and was  generalized under the designation of ``path conservative methods" in~\cite{Pares2006}. Since then, the ``path conservative methods" have been widely applied for the numerical solution of non-conservative hyperbolic problems~\cite{Dumbser2009,Castro2012,Dumbser2016,CastroDiaz2019,Schneider2021}. Nevertheless, contrary to the Lax-Wendroff theorem~\cite{Lax1960} for conservative hyperbolic systems, there is no adequate mathematical theory that can ensure the numerical convergence for any non-conservative system. In the presence of discontinuities, numerical approximations may not converge to the specified entropic weak solution.  The equivalent equation of a path conservative scheme based on Lax-Friedrich scheme is examined in~\cite{Castro2008a}. Quoting from~\cite{Castro2008a}, {\em the difficulty comes from the fact that, unlike the conservative case, the vanishing viscosity limits depend on the regularization of the problem. Even if, for simplicity, we have only calculated the modified equations corresponding to the Lax–Friedrichs scheme, the same difficulty would be present for any other scheme involving a numerical viscous term: the numerical solutions approximate the vanishing viscosity limit of a modified equation whose regularization terms depend both on the chosen family of paths and on the specific form of its viscous terms.} Therefore, there is always a doubt about the ability of numerical strategies to produce relevant numerical solutions that can converge, by mesh refinement, towards a single limit solution. In~\cite{Abgrall2010}, the following question was pointed out: {\em once a path is specified and a consistent path-conservative scheme designed, does the numerical solution converge to the assumed path}. They also point out that in some contexts there is clearly a failure of convergence upon grid refinement.  In order to clarify the questions that arise in numerical simulations of non-conservative hyperbolic problems, it is necessary to construct exact solutions for a fixed path.  From there, we can use the same path in a numerical approach and study if we have a convergence of the numerical solution to the analytical solution.  Let us note that attempts to answer these questions exist in the literature for a model of elastodynamics described by a $2 \times 2$ non-hyperbolic system.  For this model, theoretical analyses and numerical investigations are proposed \cite{Colombeau1988,Cauret1989}, and we even have exact solutions for the Riemann problem \cite{Joseph2003}.

In this paper, we consider an example of a non-conservative hyperbolic system, the shear shallow water model, for which approximate Riemann solver based methods have been developed in the literature~\cite{Gavrilyuk2018,Bhole2019,Chandrashekar2020}  and for which we construct the exact Riemann solution in this work.  Riemann solvers are an important building block of modern numerical schemes for hyperbolic systems. Therefore, there can be some confidence when using this approach for more complex data setting~\cite{Bhole2019, Gavrilyuk2018, Chandrashekar2020}. In the coming sections we will first describe the equations for shear shallow water flows written in a specific non-conservative form. This formulation uses the set of quasi-conservative variables which is very similar to the 10-moment equations of gas dynamics~\cite{Levermore1998}, but the system is genuinely non-conservative. Then the path-conservative jump conditions are recalled and used to derive an exact solution of a Riemann problem. Finally, we discuss the convergence of the numerical solution obtained from a path conservative approximate Riemann solver~\cite{Chandrashekar2020} toward the designed exact solution for some representative test cases.
%-------------------------------------------------------------------------------
\section{The SSW model}
The system describing multi-dimensional shear shallow water flow was derived by Teshukov in 2007~\cite{Teshukov2007} by depth averaging the incompressible Euler equations. This system of equations describes the evolution of the fluid depth $h$, the depth averaged horizontal velocity $\vel$ and the Reynolds tensor $\p$, and can be written as~\cite{Gavrilyuk2018}
\begin{eqnarray}
\nonumber
\df{h}{t} + \nabla\cdot(h \vel) &=&0 \\
\label{eq:ssw1}
\df{(h\vel)}{t} + \nabla\cdot\left( h \vel \otimes \vel + \half g h^2 I + h \p \right) &=& -gh \nabla\bt - C_f |\vel| \vel \\
\nonumber
\df{\p}{t} + \vel\cdot\nabla\p + (\nabla\vel)\p + \p (\nabla\vel)^\top &=& \Diss
\end{eqnarray}
The tensor $\p$ is symmetric and positive definite; it measures the distortion of the instantaneous horizontal velocity with respect to the depth average velocity $\vel$. The system derived in~\cite{Teshukov2007} was non-dissipative ($C_f = 0$, $\Diss = 0$); in~\cite{Gavrilyuk2018}, the modeling of dissipation process was introduced for the evolution of the momentum and the Reynolds stress tensor. The dissipation model provides a closure to the averaging process and was designed such as to preserve the positive definite-ness of the tensor $\p$. Recently~\cite{Chandrashekar2020}, the dissipative model proposed in~\cite{Gavrilyuk2018} has been reformulated for the evolution of the energy tensor $\En$. In this context, the SSW model can be written in an almost conservative form. To do this, we define the symmetric tensors
\[
\R_{ij} := h \p_{ij}, \qquad \E_{ij} := \half \R_{ij} + \half h v_i v_j, \qquad  1 \le i,j \le 2
\]
Then, the set of equations for the SSW model~\eqref{eq:ssw1} can be written as follows
\begin{equation}
\label{eq:tenmom}
\df{\con}{t} + \df{\fx}{x_1} + \df{\fy}{x_2} + \B_1 \df{h}{x_1} + \B_2  \df{h}{x_2} =  \s
\end{equation}
where
\[
\con = \begin{bmatrix}
h \\
h v_1 \\
h v_2 \\
\E_{11} \\
\E_{12} \\
\E_{22} \end{bmatrix}, \quad
\fx = \begin{bmatrix}
h v_1 \\
\R_{11} + h v_1^2 + \half g h^2 \\
\R_{12} + h v_1 v_2 \\
(\E_{11} + \R_{11}) v_1 \\
\E_{12} v_1 + \half (\R_{11} v_2 + \R_{12} v_1) \\
\E_{22} v_1 + \R_{12} v_2 \end{bmatrix}, \quad
\fy = \begin{bmatrix}
h v_2 \\
\R_{12} + h v_1 v_2 \\
\R_{22} + h v_2^2 + \half g h^2 \\
\E_{11} v_2 + \R_{12} v_1 \\
\E_{12} v_2 + \half (\R_{12} v_2 + \R_{22} v_1) \\
(\E_{22} + \R_{22}) v_2 \end{bmatrix}
\]
\[
\B_1 = \begin{bmatrix}
0 \\
0 \\
0 \\
g h v_1 \\
\half g h v_2 \\
0 \end{bmatrix}, \qquad
\B_2 = \begin{bmatrix}
0 \\
0 \\
0 \\
0 \\
\half g h v_1 \\
g h v_2 \end{bmatrix}, \qquad
\s = \begin{bmatrix}
0 \\
-g h \df{\bt}{x_1} - C_f |\vel| v_1 \\
-g h \df{\bt}{x_2} - C_f |\vel| v_2 \\
- g h v_1 \df{\bt}{x_1} + \half h \Diss_{11} - C_f |\vel| v_1^2 \\
- \half g h v_2 \df{\bt}{x_1} - \half g h v_1 \df{\bt}{x_2} + \half h \Diss_{12}  - C_f |\vel| v_1 v_2 \\
- g h v_2 \df{\bt}{x_2} + \half h \Diss_{22}  - C_f |\vel| v_2^2
\end{bmatrix}
\]
In the present work, we assume that the bottom topography $\bt\equiv \bt\left( x_1, x_2\right)$ is a given  smooth function. The solution must satisfy some positivity constraints which leads to the following solution space for physically admissible solutions
\[
\uad = \{ \con \in \re^6 : h > 0, \quad \R > 0 \}
\]
where $\R > 0$ means that the symmetric tensor $\R$ must be positive definite. We next consider some properties of this model.
%-------------------------------------------------------------------------------
\subsection{Total energy equation}
The additional conservation laws satisfied by the SSW model have been investigated in~\cite{Gavrilyuk2018}. The first one is related to the energy and can be derived as follows. Multiply $h$ equation by $g(h+\bt)$ and add it to the $\E_{11}$ and $\E_{22}$ equations to obtain
\begin{equation} \label{eq:toteeq}
\begin{aligned}
\df{E}{t} + \df{}{x_1}\left[ \left(E + \R_{11} + \half g h^2 \right) v_1 + \R_{12} v_2 \right] +& \df{}{x_2}\left[ \left(E + \R_{22} + \half g h^2 \right) v_2 + \R_{12} v_1 \right]  \\
=&  - C_f  |\vel|^3 + \half h \trace(\Diss)
\end{aligned}
\end{equation}
where the {\em total energy} is defined as
\begin{equation}\label{eq:totE}
E = \E_{11} + \E_{22} + \half g h^2 + g h \bt = \half \trace(\R) + \half h |\vel|^2 + \half g h^2 + g h \bt
\end{equation}
The quantity $E=E(\con)$ is a convex function but it is not a strictly convex function since it has no dependence on $\E_{12}$, and so it cannot serve as an entropy function.
%-------------------------------------------------------------------------------
\subsection{Entropy equation}
We can define the specific entropy
\begin{equation}
s = \frac{\det \p}{h^2}
\label{eq:spent}
\end{equation}
which satisfies the equation (\cite{Gavrilyuk2018}, Eq. 31)
\[
\df{s}{t} + \vel \cdot \nabla s  = \frac{1}{h^2}[ \trace(\p) \trace(\Diss) - \trace(\p\Diss)]
\]
The above equation can be rewritten as an entropy balance law,
\[
\df{\eta}{t} + \nabla \cdot (\vel \eta) =  -\frac{1}{hs} [ \trace(\p) \trace(\Diss) - \trace(\p\Diss)]
\]
where
\[
\eta = \eta(\con) = -h \log s = - h \log \left( \frac{\det \p}{h^2} \right)
\]
is a convex function of $\con$~\cite{Levermore1996}. Smooth solutions in the absence of dissipation $\Diss$ satisfy the entropy conservation law. In general, when the solution is not smooth, we require an entropy inequality
\[
\df{\eta}{t} + \nabla \cdot (\vel \eta) \le 0
\]
to hold in the sense of distributions. For a scalar problem, the entropy condition serves to enforce uniqueness of weak solutions but this is an open problem for systems of conservation laws. However, it is important to satisfy the entropy condition since it is a fundamental property of all natural systems. The availability of such an entropy condition for the SSW model~\eqref{eq:tenmom} indicates that it can serve as a useful mathematical form for the construction of numerical schemes.
%-------------------------------------------------------------------------------
\begin{remark}
For a different but related PDE model for shear shallow flows, we refer the reader to~\cite{Busto2021} where a new matrix variable $\Q$ is introduced such that $\p = \Q \Q^\top$. Unlike $\p$, the matrix $\Q$ is not assumed to be symmetric which introduces an extra variable into the model. An equation for $\Q$ is derived under some simplifying assumptions on the rotation of Reynolds tensor by friction forces, whose evolution ensures positivity of $\p$. The total energy $E$ becomes a convex function in terms of the new set of variables $(h, h\vel, h\Q)$, leading to a thermodynamically consistent model. A numerical approach based on path conservative idea is developed, which under the assumption of exact integration of some quantities, leads to a first order semi-discrete scheme which is shown to conserve the total energy, is consistent with the entropy inequality and with the vanishing viscosity limit of the model. In order to ensure the conservation of total energy in the inviscid case for the fully discrete scheme, a scaling of the variable $\Q$ is performed after each time step.
\end{remark}
%-------------------------------------------------------------------------------
\subsection{Hyperbolicity}
We will consider the 1-D SSW model which can be written as
\begin{equation}
\label{eq:ssw1d}
\df{\con}{t} + \df{\f(\con)}{x} + \B(\m) \df{h}{x} = \s(\con)
\end{equation}
where $\f = \fx$, $\B = \B_1$, $\m = h \vel$, and the source term is given by
\[
\s = \begin{bmatrix}
0 \\
-g h \df{\bt}{x} - C_f |\vel| v_1 \\
 - C_f |\vel| v_2 \\
-\alpha |\vel|^3 \p_{11} - g h v_1 \df{\bt}{x} - C_f |\vel| v_1^2 \\
-\alpha |\vel|^3 \p_{12} - \half g h v_2 \df{\bt}{x}  - C_f |\vel| v_1 v_2 \\
-\alpha |\vel|^3 \p_{22}  - C_f |\vel| v_2^2
\end{bmatrix}
\]
Ignoring the source term in~\eqref{eq:ssw1d} for the moment as they do not contain derivatives of $\con$, let us write the non-conservative system~\eqref{eq:ssw1d} in quasi-linear form as
\begin{equation} \label{eq:ssw1db}
\df{\con}{t} + \A(\con) \df{\con}{x} = 0, \qquad \A = \f'(\con) +
\begin{bmatrix}
0 & 0 & 0 & 0 & 0 & 0 \\
0 & 0 & 0 & 0 & 0 & 0 \\
0 & 0 & 0 & 0 & 0 & 0 \\
g h v_1 & 0 & 0 & 0 & 0 & 0 \\
\half g h v_2 & 0 & 0 & 0 & 0 & 0 \\
0 & 0 & 0 & 0 & 0 & 0
\end{bmatrix}
\end{equation}
For simplicity of notation, we will sometimes write the velocity components as  $(u,v)= (v_1,v_2)$. The system of equations~\eqref{eq:ssw1db} is a hyperbolic system with eigenvalues of $\A$ being given by~\cite{Gavrilyuk2018,Berthon2002}
\[
\lambda_1 = u - \sqrt{gh + 3 \p_{11}}, \quad \lambda_2 = u - \sqrt{\p_{11}},    \quad \lambda_3 = \lambda_4 = u, \quad \lambda_5 = u+ \sqrt{\p_{11}}, \quad     \lambda_6 = u + \sqrt{g h + 3 \p_{11}}
\]
The first and last eigenvalues correspond to genuinely non-linear               characteristic fields in the sense of Lax~\cite{Godlewski1996}, while the       remaining eigenvalues correspond to linearly degenerate characteristic          fields~\cite{Gavrilyuk2018}. Hence $\lambda_1, \lambda_6$ are associated with   shock/rarefaction waves while the remaining eigenvalues give rise to shear/contact waves. To study the hyperbolicity, it is useful to transform the equations in terms of primitive variables
\[
\prim = [h, \ v_1, \ v_2, \ \p_{11}, \ \p_{12}, \ \p_{22}]
\]
as the independent variables. Define
\[
\coefa = \sqrt{gh + 3 \p_{11}}, \qquad \coefb = \sqrt{\p_{11}}
\]
Then the eigenvectors in terms of the primitive variables are give as follows.
\paragraph{1-wave: shock/rarefaction, $\lambda_1 = u-\coefa$}
\[
\bm{r}_1 = \left[ h(\coefa^2-\coefb^2), \ -\coefa(\coefa^2-\coefb^2), \ -2\coefa\p_{12}, \ 2\coefb^2(\coefa^2-\coefb^2), \ (\coefa^2+\coefb^2)\p_{12}, \ 4\p_{12}^2 \right]
\]
\paragraph{2-shear wave: $\lambda_2 = u-\coefb$}
\[
\bm{r}_2 = [ 0, \ 0, \ -\coefb, \ 0, \ \coefb^2, \ 2 \p_{12} ]^\top
\]

\paragraph{3,4-contact wave: $\lambda_3 = \lambda_4 = u$}
\[
\bm{r}_3 = [0, \ 0, \ 0, \ 0, \ 0, \ 1]^\top
\]
\[
\bm{r}_4 = [-h, \ 0, \ 0, \ gh + \p_{11}, \ \p_{12}, \ 0]
\]

\paragraph{5-shear wave: $\lambda_5 = u+\coefb$}
\[
\bm{r}_5 = [ 0, \ 0, \ \coefb, \ 0, \ \coefb^2, \ 2 \p_{12} ]^\top
\]

\paragraph{6-wave: shock/rarefaction, $\lambda_6 = u+\coefa$}
\[
\bm{r}_6 = \left[ h(\coefa^2-\coefb^2), \ \coefa(\coefa^2-\coefb^2), \ 2\coefa\p_{12}, \ 2\coefb^2(\coefa^2-\coefb^2), \ (\coefa^2+\coefb^2)\p_{12}, \ 4\p_{12}^2 \right]
\]
The waves and their ordering are illustrated in Figure~\ref{SSW_RP_Waves}.  Note that, when $\p_{11}$ goes to zero, we have $\coefb \rightarrow 0$ and the system is no more hyperbolic. Indeed, the eigenvectors $\bm{r}_2 $, $\bm{r}_3 $ and $\bm{r}_5$ become dependent. Moreover, even if for $\p_{11} =\p_{12}=\p_{22}=0$ the system \eqref{eq:ssw1db} can be formally reduced to a conservative formulation, this change in the nature of the model is accompanied here by an eigenvalue whose multiplicity becomes four but asymptotically associated to only two independent eigenvectors.
%-------------------------------------------------------------------------------
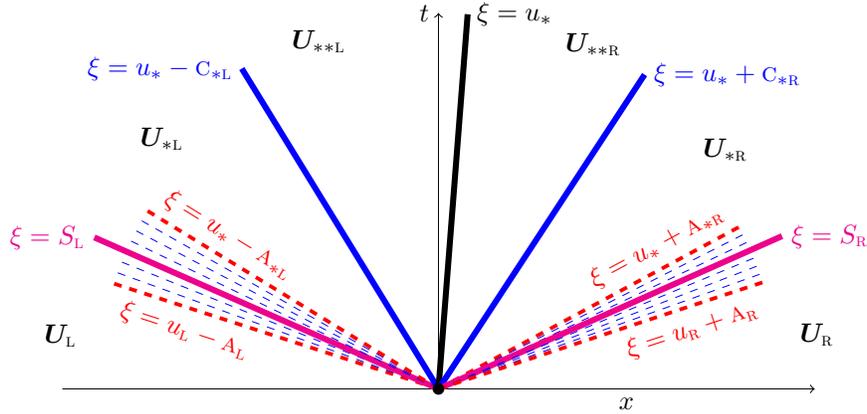
\begin{figure}
\begin{center}
 \begin{tikzpicture}[scale=4.0]
 \coordinate (X0) at (   0.00000000     ,   0.00000000     );
 \coordinate (Xm) at (  -1.25000000     ,   0.00000000     );
 \coordinate (Xp) at (   1.25000000     ,   0.00000000     );
 \coordinate (Xn) at (   0.00000000     ,   1.25000000     );
 \coordinate (Xt001) at (  -1.09377158     ,  0.355195343     );
 \coordinate (Xt002) at (  -1.08120501     ,  0.391785026     );
 \coordinate (Xt003) at (  -1.06741500     ,  0.427931309     );
 \coordinate (Xt004) at (  -1.14393139     ,  0.503905654     );
 \coordinate (Xt005) at (  -1.03622770     ,  0.498730540     );
 \coordinate (Xt006) at (  -1.01886570     ,  0.533303380     );
 \coordinate (Xt007) at (  -1.00035071     ,  0.567272663     );
 \coordinate (Xt008) at ( -0.980703592     ,  0.600600004     );
 \coordinate (Xt009) at ( -0.653674722     ,   1.06546199     );
 \coordinate (Xt010) at (   9.70319808E-02 ,   1.24622822     );
 \coordinate (Xt011) at (  0.685138464     ,   1.04550719     );
 \coordinate (Xt012) at (   1.01225865     ,  0.545740366     );
 \coordinate (Xt013) at (   1.02570510     ,  0.520027936     );
 \coordinate (Xt014) at (   1.03849852     ,  0.493984491     );
 \coordinate (Xt015) at (   1.14199018     ,  0.508289754     );
 \coordinate (Xt016) at (   1.06209445     ,  0.440970927     );
 \coordinate (Xt017) at (   1.07288182     ,  0.414034545     );
 \coordinate (Xt018) at (   1.08298612     ,  0.386834592     );
 \coordinate (Xt019) at (   1.09240103     ,  0.359388381     );
 \coordinate (Xw001) at (  -1.17188573     ,  0.177597672     );
 \coordinate (Xw002) at ( -0.817189157     ,  0.833030999     );
 \coordinate (Xw003) at ( -0.278321385     ,   1.15584517     );
 \coordinate (Xw004) at (  0.391085207     ,   1.14586771     );
 \coordinate (Xw005) at (  0.848698556     ,  0.795623779     );
 \coordinate (Xw006) at (   1.17120051     ,  0.179694191     );
 \draw[line width=0.1mm,->]  (Xm) -- (Xp)  node[near end,sloped,below] {$x$};
 \draw[line width=0.1mm,->]  (X0) -- (Xn);
 \draw (Xn) node[anchor=east]{$t$};
 \draw[red,line width=0.5mm, dashed]  (X0) -- (Xt001)  node[near end,sloped,below] {$\xi = u_{\lf} - \coefa_{\lf}$};
 \draw[blue,line width=0.1mm, dashed]  (X0) -- (Xt002);
 \draw[blue,line width=0.1mm, dashed]  (X0) -- (Xt003);
 \draw[magenta,line width=0.8mm]  (X0) -- (Xt004);
 \draw[magenta] (Xt004) node[anchor=east] {$\xi = S_{\lf}$};
 \draw[blue,line width=0.1mm, dashed]  (X0) -- (Xt005);
 \draw[blue,line width=0.1mm, dashed]  (X0) -- (Xt006);
 \draw[blue,line width=0.1mm, dashed]  (X0) -- (Xt007);
 \draw[red,line width=0.5mm, dashed]  (X0) -- (Xt008)  node[near end,sloped,above] {$\xi = u_{*} - \coefa_{*\lf}$};
 \draw[blue,line width=0.8mm]  (X0) -- (Xt009);
 \draw[blue] (Xt009) node[anchor=east]{$\xi =  u_{*} - \coefb_{*\lf}$};
 \draw[line width=0.8mm]  (X0) -- (Xt010);
 \draw (Xt010) node[anchor=west]{$\xi =  u_{*} $};
 \draw[blue,line width=0.8mm]  (X0) -- (Xt011);
 \draw[blue] (Xt011) node[anchor=west]{$\xi =  u_{*} + \coefb_{*\rg}$};
 \draw[red,line width=0.5mm, dashed]  (X0) -- (Xt012) node[near end,sloped,above] {$\xi = u_{*} + \coefa_{*\rg}$};
 \draw[blue,line width=0.1mm, dashed]  (X0) -- (Xt013);
 \draw[blue,line width=0.1mm, dashed]  (X0) -- (Xt014);
 \draw[magenta,line width=0.8mm]  (X0) -- (Xt015);
 \draw[magenta] (Xt015) node[anchor=west] {$\xi = S_{\rg}$};
 \draw[blue,line width=0.1mm, dashed]  (X0) -- (Xt016);
 \draw[blue,line width=0.1mm, dashed]  (X0) -- (Xt017);
 \draw[blue,line width=0.1mm, dashed]  (X0) -- (Xt018);
 \draw[red,line width=0.5mm, dashed]  (X0) -- (Xt019) node[near end,sloped,below] {$\xi = u_{\rg} + \coefa_{\rg}$};
 \draw (Xw001) node[anchor=east] {$\Large \con_{\lf}$};
 \draw (Xw002) node[anchor=east] {$\Large \con_{*\lf}$};
 \draw (Xw003) node[anchor=east] {$\Large \con_{**\lf}$};
 \draw (Xw004) node[anchor=west] {$\Large \con_{**\rg}$};
 \draw (Xw005) node[anchor=west] {$\Large \con_{*\rg}$};
 \draw (Xw006) node[anchor=west] {$\Large \con_{\rg}$};
 \filldraw [opacity=1] (X0) circle (0.5pt);
\end{tikzpicture}
\end{center}
   \caption{ Shear shallow water model: Wave pattern  for the 1-D
     Riemann problem. Plain lines are used for discontinuities and
     dashed lines for rarefaction waves. For the first and the last
     waves, we need to estimate  whether it is a shock or a
     rarefaction  wave. Waves speeds are defined with  the
     self-similar variable $\xi =x/t$.  }
 \label{SSW_RP_Waves}
 \end{figure}

%-------------------------------------------------------------------------------
\section{Concept of weak solution}
If we have discontinuous solutions for~\eqref{eq:ssw1db}, then we have to give a proper mathematical meaning to the spatial derivative term which is based on a weak formulation using integration by parts if $\A$ is the gradient of a flux function as in case of conservation laws. If $\A$ is not the gradient of a flux, then the non-conservative product is interpreted as a Borel measure~\cite{DalMaso1995}. This definition requires the choice of a smooth path $\Psi : [0,1] \times \uad \times \uad \to \uad$ connecting the two states $\conl, \conr$ across the jump discontinuity at $x=x_0$ such that
\[
\Psi(0; \conl, \conr) = \conl, \qquad \Psi(1; \conl, \conr) = \conr
\]
where $\uad$ is the set of admissible states. Then the non-conservative product is defined as the Borel measure~\cite{DalMaso1995,Gosse2001}
\[
\mu(x_0) = \left[ \int_0^1 \A(\Psi(\xi; \conl, \conr)) \dd{\Psi}{\xi}(\xi;\con_{\lf},\con_{\rg}) \ud \xi \right] \delta(x_0)
\]
where $\delta$ is the Dirac delta function. The quantity inside the square brackets will be referred to as the {\em fluctuation} and plays an important role in the construction of approximate Riemann solvers. This viewpoint is equivalent to the definition of non-conservative product proposed by Volpert~\cite{Volpert1967}. Using this notion, a theory of weak solutions can be developed based on which the Riemann problem has usual structure as for conservative systems, leading to shocks or rarefaction waves corresponding to genuinely non-linear characteristic fields and contact waves corresponding to linearly degenerate fields. Across a point of discontinuity moving with speed $S$, a weak solution has to  satisfy the generalized Rankine-Hugoniot jump condition
\[
\int_0^1 \left[ \A(\Psi(\xi; \conl, \conr)) - S I \right] \dd{\Psi}{\xi}(\xi;\con_{\lf},\con_{\rg}) \ud \xi = 0
\]
The choice of the correct path is a difficult question and has to be derived from a regularized model motivated from the physical background of the problem. In many applications, the choice of the correct path is not known and in practice, it is usual to consider the linear path in state space
\begin{equation}
\Psi(\xi; \conl, \conr) = \conl + \xi (\conr - \conl)
\label{eq:path}
\end{equation}
Then the jump condition for our model~\eqref{eq:ssw1d} becomes
\begin{equation}
\label{eq:rh}
\int_0^1 \A(\Psi(\xi; \conl, \conr))  \dd{\Psi}{\xi} \ud \xi = \f_R - \f_L + \B(\m_L, \m_R) (h_R - h_L) = S(\conr - \conl)
\end{equation}
where
\[
\B(\m_L, \m_R) = \B\left( \frac{\m_L + \m_R}{2} \right)
\]
The source term $\s$ does not make any contribution to the jump conditions since it does not contain derivative of $\con$.
%-------------------------------------------------------------------------------
\subsection{Rankine-Hugoniot  jump conditions}
In the following, we will assume that $\con_{\lf},\con_{\rg}$ are the left and right states in a Riemann problem. Let us define the average and jump operators by
\[
\avg{\cdot} = \frac{ (\cdot)_{\lf} + (\cdot)_{\rg}}{2}, \qquad \jump{ \cdot } = (\cdot)_{\rg} - (\cdot)_{\lf}
\]
Then the jump conditions~\eqref{eq:rh} across a discontinuity moving with speed $S$ lead to the following set of generalized Rankine-Hugoniot conditions.
\begin{subequations}
\begin{align}
\label{eq:rh1}
\jump{h u} &= S \jump{h} \\
\label{eq:rh2}
\jump{\R_{11} + h u^2 + \half g h^2} &= S \jump{h u} \\
\label{eq:rh3}
\jump{\R_{12} + h u v} &= S \jump{h v} \\
\label{eq:rh4}
\jump{\E_{11} u + \R_{11} u} + g \avg{h u} \jump{h} &= S \jump{\E_{11}} \\
\label{eq:rh5}
\jump{\E_{12} u + \half (\R_{11} v + \R_{12} u)} + \half g \avg{h v} \jump{h} &= S \jump{\E_{12}} \\
\label{eq:rh6}
\jump{\E_{22} u + \R_{12} v} &= S \jump{\E_{22}}
\end{align}
\end{subequations}
Moreover, the total energy equation~\eqref{eq:toteeq} also has an associated jump condition.
%-------------------------------------------------------------------------------
\begin{lemma}\label{lem:totE}
For the linear path~\eqref{eq:path}, the jump conditions~\eqref{eq:rh1}-\eqref{eq:rh6} are consistent with the jump conditions of the total energy equation~\eqref{eq:toteeq}.
\end{lemma}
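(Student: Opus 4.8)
The plan is to verify directly that the total-energy jump condition implied by \eqref{eq:toteeq} is an algebraic consequence of the six conditions \eqref{eq:rh1}--\eqref{eq:rh6}, mirroring at the level of jumps the manipulation used to derive \eqref{eq:toteeq} at the PDE level (multiply the $h$-equation by $g(h+\bt)$ and add the $\E_{11}$ and $\E_{22}$ equations). Since the source carries no derivative of $\con$ it does not contribute to the jump, so in one space dimension the target condition is
\[
\jump{\left(E + \R_{11} + \half g h^2\right)u + \R_{12} v} = S\jump{E},
\]
with $E$ given by \eqref{eq:totE}. First I would dispose of the bottom topography: because $\bt$ is a smooth function of position it takes a single value at the point of discontinuity, so $\jump{g h \bt\, u} = g\bt\jump{hu}$ and $\jump{gh\bt} = g\bt\jump{h}$. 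Using \eqref{eq:rh1} in the form $\jump{hu}=S\jump{h}$, these two contributions cancel between the two sides, and it suffices to prove the identity for the topography-free energy $\tilde{E} = \E_{11} + \E_{22} + \half g h^2$ and its flux.

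Next I would expand both sides. Writing out the energy flux and collecting terms gives
\[
\jump{\tilde{E} u + \R_{11}u + \half g h^2 u + \R_{12}v} = \jump{\E_{11}u + \R_{11}u} + \jump{\E_{22}u + \R_{12}v} + g\jump{h^2 u},
\]
while on the right, using $\jump{h^2}=2\avg{h}\jump{h}$,
\[
S\jump{\tilde{E}} = S\jump{\E_{11}} + S\jump{\E_{22}} + g\avg{h}\,S\jump{h}.
\]
Substituting \eqref{eq:rh4} for $S\jump{\E_{11}}$, \eqref{eq:rh6} for $S\jump{\E_{22}}$, and \eqref{eq:rh1} for $S\jump{h}$, the first two brackets match the corresponding brackets on the left and cancel, leaving the single scalar identity
\[
\jump{h^2 u} = \avg{hu}\jump{h} + \avg{h}\jump{hu}
\]
to be established.

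The crux of the argument, and the only place where the choice of path enters, is this last identity. It is precisely the product rule for jumps, $\jump{ab} = \avg{a}\jump{b} + \avg{b}\jump{a}$, applied with $a=h$ and $b=hu$; a one-line expansion of the averages and jumps confirms it. I expect this to be the conceptual heart of the proof rather than a computational obstacle: the step that really does the work is recognizing that the linear path \eqref{eq:path} is exactly what produces the $\B$-fluctuation in \eqref{eq:rh4} with the \emph{arithmetic-mean} coefficient $g\avg{hu}$, and that this mean is the one the product rule demands in order to reconstruct $\jump{h^2u}$. For a nonlinear path the integral $\int_0^1 \A(\Psi)\,\Psi'\,\ud\xi$ would replace these arithmetic means by path-dependent averages, the identity would fail, and total-energy balance would not in general survive across discontinuities. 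Once the identity is in hand the lemma follows, and I would note that only the $h$, $\E_{11}$ and $\E_{22}$ conditions are used, consistent with the way \eqref{eq:toteeq} is assembled.
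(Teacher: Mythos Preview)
Your proof is correct and follows essentially the same approach as the paper: both use only \eqref{eq:rh1}, \eqref{eq:rh4}, \eqref{eq:rh6} and reduce the verification to the product-rule identity $\jump{ab}=\avg{a}\jump{b}+\avg{b}\jump{a}$ (the paper applies it to $\jump{(gh^2+gh\bt)u}$ in one step rather than splitting off the $\bt$ contribution first, but the content is identical). Your remark that the linear path is precisely what makes the non-conservative fluctuation in \eqref{eq:rh4} appear with the arithmetic mean $g\avg{hu}$, and hence mesh with the product rule, is a nice addition that the paper leaves implicit.
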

\begin{proof}
We will show that the jump conditions~\eqref{eq:rh1}-\eqref{eq:rh6} imply that
\begin{equation}
\jump{ \left(E + \R_{11} + \half g h^2 \right) u + \R_{12} v } = S \jump{E}
\label{eq:jumptotE}
\end{equation}
which is the jump condition for the total energy equation. Adding the jump conditions from $\E_{11}, \E_{22}$ equations
\begin{equation}
\jump{(\E_{11} + \E_{22})u + \R_{11} u + \R_{12}v } + g \avg{h u} \jump{h} = S \jump{\E_{11} + \E_{22}}
\label{eq:jumpE1}
\end{equation}
Also
\begin{eqnarray}
\nonumber
\jump{(gh^2 + gh\bt)u}  &=& g \avg{hu} \jump{h} + g \avg{h} \jump{hu} + g \bt \jump{hu} \\
\nonumber
&=& g \avg{hu} \jump{h} + S g \avg{h} \jump{h} + S g \bt \jump{h}, \qquad \textrm{using~\eqref{eq:rh1}} \\
\label{eq:jumpE2}
&=& g \avg{hu} \jump{h} + S \jump{\half g h^2  + g h \bt }
\end{eqnarray}
Adding~\eqref{eq:jumpE1} and~\eqref{eq:jumpE2}, we obtain~\eqref{eq:jumptotE}.
\end{proof}
%-------------------------------------------------------------------------------
\section{ Properties and structures of single waves.}
%-------------------------------------------------------------------------------
We will  focus in this paper on the derivation of an exact solution for the Riemann problem associated to  the 1-D SSW model where the source term is set to zero. The spectral analysis of this genuinely non-conservative hyperbolic system was proposed in \cite{Gavrilyuk2018}. Within the {\em path conservative} framework, generalized  jump conditions for  this non-conservative system was derived in~\cite{Chandrashekar2020}.  These results  have been recalled in the previous section. In order to define the strategy that will allow us to obtain the  exact solution of the Riemann problem, we first need to characterize the properties of waves associated  with each eigenvalue. The  first and the sixth characteristic fields, respectively associated to $\lambda_1 = u -\coefa$ and $\lambda_6 = u + \coefa$,  are genuinely non-linear and can develop either shock (discontinuous) or rarefaction (continuous) waves.  The other characteristic fields are associated to linearly degenerate waves. We will name {\em contact wave}  the field associated to the eigenvalue $\lambda_3 =\lambda_4 =u$ and {\em shear waves} the field associated to $\lambda_2 = u -\coefb$ and $\lambda_5 = u + \coefb$. Asymptotically,  the contact and the shear waves will collapse to a single wave when $\coefb$ goes to zero,  which will be  the  case when the variable $\p_{11}$ goes to zero.  Since  the tensor $\p$ is always symmetric and positive definite, there is a strict ordering of the eigenvalues
\[ u -\coefa   <  u -\coefb   <  u   <  u +\coefb  <  u + \coefa
\]
However, when approximations are applied with small values of $\p_{11}$, we can face some numerical inconsistencies. Riemann invariants are constant across linearly degenerate waves and rarefaction waves, whereas for shock waves, generalized jump conditions should be satisfied. In the subsequent subsections, we will derive the Riemann invariants or the relations to be satisfied for each single wave connecting two different states: the state $\con_{\lf}$ on the left and $\con_{\rg}$ on the right of the wave.
\subsection{Rarefaction waves}
The eigenvalues $\lambda_1 = u -\coefa$ and $\lambda_6 = u + \coefa$ are genuinely non-linear and may give rise to rarefaction waves.
%-------------------------------------------------------------------------------
\begin{theorem}
For the 1-rarefaction wave, the Riemann invariants are given by
\begin{equation}\label{eq:rareinv1}
\frac{\p_{11}}{h^2}, \qquad u + \Afun(h,c), \qquad \frac{\detP}{h^2}, \qquad \frac{\p_{12}}{g h + 2 \p_{11}}, \qquad v + \frac{2\p_{12}}{g h + 2 \p_{11}} \Afun(h,c)
\end{equation}
and for the 6-rarefaction wave, the Riemann invariants are given by
\begin{equation}\label{eq:rareinv6}
\frac{\p_{11}}{h^2}, \qquad u - \Afun(h,c), \qquad \frac{\detP}{h^2}, \qquad \frac{\p_{12}}{g h + 2 \p_{11}}, \qquad v - \frac{2\p_{12}}{g h + 2 \p_{11}} \Afun(h,c)
\end{equation}
where
\begin{equation}
\Afun(h,c) = \sqrt{g h + 3 c h^2} + \frac{g}{\sqrt{3c}} \sinh^{-1}\sqrt{ \frac{3 c h}{g}}, \qquad c = \frac{\p_{11}}{h^2}
\label{eq:Afun}
\end{equation}
\end{theorem}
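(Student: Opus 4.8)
A function $w(\prim)$ is a Riemann invariant of the $1$-field precisely when it is constant along the integral curves of $\bm{r}_1$, i.e. when $\nabla_{\prim} w \cdot \bm{r}_1 = 0$. Since $\lambda_1$ is a simple eigenvalue of a $6\times 6$ system, a complete set of Riemann invariants consists of five functionally independent solutions of this scalar first-order linear PDE. The plan is therefore to verify that each of the five listed functions annihilates $\bm{r}_1$ and that the five are independent. Throughout I would abbreviate $\coefa^2-\coefb^2 = gh+2\p_{11}$ and $\coefa^2+\coefb^2 = gh+4\p_{11}$, so that the components of $\bm{r}_1$ in the variables $(h,u,v,\p_{11},\p_{12},\p_{22})$ read $\big(h(gh+2\p_{11}),\,-\coefa(gh+2\p_{11}),\,-2\coefa\p_{12},\,2\p_{11}(gh+2\p_{11}),\,(gh+4\p_{11})\p_{12},\,4\p_{12}^2\big)$.

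First I would dispose of the three purely algebraic invariants $\p_{11}/h^2$, $\detP/h^2$, and $\p_{12}/(gh+2\p_{11})$ by direct substitution. For each one I compute its gradient, form the dot product with $\bm{r}_1$, and factor out the obvious common denominator; what remains is in every case a polynomial identity that collapses to zero. For instance, for $\p_{11}/h^2$ the $h$- and $\p_{11}$-contributions are $-2\p_{11}(gh+2\p_{11})/h^2$ and $+2\p_{11}(gh+2\p_{11})/h^2$, which cancel; the checks for $\detP/h^2$ and $\p_{12}/(gh+2\p_{11})$ are of the same flavour, each reducing to a bracket of the form $[\,\cdots\,]=0$ after clearing $1/h^2$ or $1/(gh+2\p_{11})$.

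The two invariants containing $\Afun$ are where the real content lies. The key observation is that $c=\p_{11}/h^2$ and $\p_{12}/(gh+2\p_{11})$ are already known to be invariant, hence constant along the integral curve of $\bm{r}_1$; consequently, along that curve, $\Afun(h,c)$ varies only through $h$. When I expand $\nabla_{\prim}(u+\Afun)\cdot\bm{r}_1$, the two terms proportional to $\partial_c\Afun$ cancel exactly (this is just the invariance of $c$ re-expressed), and the condition reduces to the single ordinary differential equation $h\,\partial_h\Afun|_c = \coefa$, that is $\partial_h\Afun|_c = \coefa/h = \sqrt{(g+3ch)/h}$. I would then verify by direct differentiation that the closed form in~\eqref{eq:Afun} satisfies this ODE: the derivative of $\sqrt{gh+3ch^2}$ contributes $(g+6ch)/2\sqrt{h(g+3ch)}$, the $\sinh^{-1}$ term contributes $g/2\sqrt{h(g+3ch)}$, and these sum to $\sqrt{(g+3ch)/h}$ as required, the $\sinh^{-1}$ piece being precisely the antiderivative that makes the two contributions combine. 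For the fifth invariant I write $\phi = 2\p_{12}/(gh+2\p_{11})$, which is constant along the curve; then $\tfrac{d}{ds}(v+\phi\,\Afun) = -2\coefa\p_{12} + \phi\,(\partial_h\Afun)\,h(gh+2\p_{11}) = -2\coefa\p_{12}+2\coefa\p_{12}=0$, using the ODE just established.

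Finally, the $6$-field follows with no extra work: $\bm{r}_6$ differs from $\bm{r}_1$ only by a sign change in its $u$- and $v$-components, so the three algebraic invariants are unchanged while the sign of the $\Afun$ term reverses, producing exactly~\eqref{eq:rareinv6}. Independence of the five functions is immediate since their gradients are linearly independent (they activate distinct combinations of the six primitive variables). I expect the single genuine obstacle to be the ODE step for $\Afun$: recognizing that invariance of $c$ and of $\p_{12}/(gh+2\p_{11})$ decouples the problem into $\partial_h\Afun|_c=\coefa/h$, and checking that the stated $\sinh^{-1}$ closed form integrates this exactly. The remaining verifications are routine once the definitions $\coefa^2=gh+3\p_{11}$ and $\coefb^2=\p_{11}$ are substituted.
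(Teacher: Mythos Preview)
Your proof is correct and follows essentially the same approach as the paper: both analyse the integral curves of $\bm{r}_1$, the paper by writing the characteristic system $\frac{dh}{r_1^h}=\frac{du}{r_1^u}=\cdots$ and integrating pairwise, you by checking $\nabla_{\prim}w\cdot\bm{r}_1=0$ directly. Your organisation---first establishing the algebraic invariants $c$ and $\p_{12}/(gh+2\p_{11})$, then using their constancy to reduce the $\Afun$-checks to the single ODE $\partial_h\Afun|_c=\coefa/h$---mirrors exactly the paper's order of integration, so the two arguments are the same up to the construction-versus-verification distinction.
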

%-------------------------------------------------------------------------------
\begin{proof}
We find the Riemann invariants by analyzing the integrals curves of the eigenvector fields. The integral curve corresponding to $\bm{r}_1$ satisfy the following set of equations
\begin{equation}
\frac{\ud h}{h(\coefa^2-\coefb^2)} = \frac{\ud u}{-\coefa(\coefa^2 - \coefb^2)} = \frac{\ud v}{-2\coefa \p_{12}} = \frac{\ud \p_{11}}{2 \coefb^2(\coefa^2 - \coefb^2)} = \frac{\ud \p_{12}}{(\coefa^2 + \coefb^2)\p_{12}} = \frac{\ud \p_{22}}{4 \p_{12}^2}
\label{eq:rareintcurve}
\end{equation}
Using the first and fourth terms in \eqref{eq:rareintcurve}, we get
\[
\frac{\ud h}{h}  = \frac{\ud \p_{11}}{2 \p_{11}} \quad\implies\quad \frac{\p_{11}}{h^2} = \frac{\R_{11}}{h^3} = \textrm{constant} = c
\]
which is the first invariant. Using the first and second terms in \eqref{eq:rareintcurve}, we get
\[
\ud u = - \frac{\coefa}{h} \ud h = - \frac{ \sqrt{ g h + 3 c h^2}}{h} \ud h
\]
where we used the first invariant. Integrating this, we obtain the second invariant $u + \Afun(h,c)$, with
\[
\Afun(h,c)  = \int \frac{1}{h} \sqrt{ g h + 3 c h^2} \ud h \equiv \sqrt{g h + 3 c h^2} + \frac{g}{\sqrt{3c}} \sinh^{-1}\sqrt{ \frac{3 c h}{g}}
\]
By definition of the determinant, we have $\detP = \p_{11} \p_{22} - \p_{12}^2$. Therefore,
\[
\begin{array}{rclcl}
\ud[\detP] &=& \p_{22} \ud\p_{11} + \p_{11} \ud \p_{22} - 2 \p_{12} \ud \p_{12} & &\\
&=& \displaystyle
\p_{22} \frac{2 \coefb}{h} \ud h + \p_{11} \frac{4 \p_{12}^2}{h(\coefa^2-\coefb^2)} \ud h - 2 \p_{12} \frac{(\coefa^2+\coefb^2) \p_{12}}{h(\coefa^2-\coefb^2)} \ud h
&= &2 \dfrac{\detP}{h} \ud h
\end{array}
\]
Hence, the enstrophy $\frac{\detP}{h^2}$ is conserved across rarefaction waves, which is the third  Riemann invariant.
%\[
%\ud \left( \frac{\detP}{h^2} \right) = 0 \quad\implies\quad \frac{\detP}{h^2}  = \textrm{constant}
%\]
From the first and fifth terms in \eqref{eq:rareintcurve}, we get
\[
\ud \ln \p_{12} = \frac{1}{h} \frac{\coefa^2 + \coefb^2}{\coefa^2 - \coefb^2} \ud h = \frac{1}{h} \frac{gh + 4 \p_{11}}{g h + 2 \p_{11}} \ud h =  \frac{g + 4c h}{g h  + 2 c h^2} \ud h =  \ud \ln(g h + 2 c h^2)
\]
and we obtain the fourth invariant, for convenience denoted as $\newInv =  \frac{\p_{12}}{g h + 2 c h^2}$.
% defined as
%\[
%\newInv = \frac{\p_{12}}{g h + 2 c h^2}
%\]
Finally, from the first and third terms in~\eqref{eq:rareintcurve}, we get
\[
\ud v = - \frac{2 \coefa \p_{12}}{h(\coefa^2-\coefb^2)} \ud h = -2 \newInv  \frac{\sqrt{gh + 3 c h^2}}{h} \ud h
\]
and integrating this we obtain the fifth invariant: $v - 2 \newInv \Afun(h,c)$.
The proof for the 6-rarefaction is similar except for some sign differences.
\end{proof}
%-------------------------------------------------------------------------------
The two states $\con_{\lf},\con_{\rg}$ can be connected by a
rarefaction wave provided they satisfy the Lax condition; for a 1-rarefaction, they must satisfy
\begin{equation} \label{eq:laxrare}
\lambda_1(\con_{\lf}) < \lambda_1(\con_{\rg})
\end{equation}
and a similar condition must be satisfied in case of a 6-rarefaction wave.
%-------------------------------------------------------------------------------
\begin{lemma}
The set of admissible left and right states $\con_{\lf},\con_{\rg}$ that can be connected by
\begin{enumerate}
\item a 1-rarefaction must satisfy $h_{\rg} < h_{\lf}$
\item a 6-rarefaction must satisfy $h_{\lf} < h_{\rg}$
\end{enumerate}
\end{lemma}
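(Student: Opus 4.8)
The plan is to combine the Lax condition~\eqref{eq:laxrare} with the Riemann invariants~\eqref{eq:rareinv1} to show that, along a 1-rarefaction, the eigenvalue $\lambda_1 = u - \coefa$ is a strictly decreasing function of the depth $h$; the sign of the height jump is then forced by the Lax inequality. The first invariant fixes $c = \p_{11}/h^2$ across the wave, so along the integral curve I can regard $\coefa = \sqrt{gh + 3\p_{11}} = \sqrt{gh + 3ch^2}$ as a function of $h$ alone with $c$ frozen. Note first that $h$ is a legitimate monotone parameter along the curve: the first ratio in~\eqref{eq:rareintcurve} carries the factor $h(\coefa^2 - \coefb^2) = h(gh + 2\p_{11}) > 0$ for admissible states, so $h$ varies strictly monotonically along the wave.

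The key step is to differentiate $\lambda_1$ along the curve. Using the second invariant $u + \Afun(h,c) = \text{const}$ gives $\dd{u}{h} = -\dd{\Afun}{h}$, and from the definition~\eqref{eq:Afun} of $\Afun$ as $\int \frac1h \sqrt{gh+3ch^2}\,\ud h$ I read off $\dd{\Afun}{h} = \coefa/h$, whence $\dd{u}{h} = -\coefa/h$. Differentiating $\coefa$ directly gives $\dd{\coefa}{h} = (g+6ch)/(2\coefa)$. Therefore
\[
\dd{\lambda_1}{h} = \dd{u}{h} - \dd{\coefa}{h} = -\frac{\coefa}{h} - \frac{g+6ch}{2\coefa} < 0,
\]
the strict negativity following from $h>0$, $g>0$, $c=\p_{11}/h^2>0$ and $\coefa>0$ for admissible states. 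Since $\lambda_1$ strictly decreases with $h$, the Lax condition $\lambda_1(\conl) < \lambda_1(\conr)$ forces $h_\rg < h_\lf$, which is the first claim.

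For the 6-rarefaction I would run the identical argument with the invariants~\eqref{eq:rareinv6}, where the second invariant is $u - \Afun(h,c)$ so that $\dd{u}{h} = +\coefa/h$. This yields
\[
\dd{\lambda_6}{h} = \dd{u}{h} + \dd{\coefa}{h} = \frac{\coefa}{h} + \frac{g+6ch}{2\coefa} > 0,
\]
so $\lambda_6$ is strictly increasing in $h$, and the corresponding Lax inequality $\lambda_6(\conl) < \lambda_6(\conr)$ gives $h_\lf < h_\rg$.

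I do not anticipate a genuine obstacle here: once $c$ is frozen by the first invariant, both genuinely non-linear eigenvalues depend on the state only through $u$ and $h$, and the monotonicity computation is elementary. The only point needing explicit care is the verification that $h$ is a monotone parameter along the integral curve, noted above, which is what licenses reading the direction of the depth jump off the sign of $\dd{\lambda_1}{h}$ and $\dd{\lambda_6}{h}$.
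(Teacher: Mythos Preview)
Your proof is correct and follows essentially the same approach as the paper: both combine the first two Riemann invariants with the Lax inequality to reduce the question to the monotonicity in $h$ of $\lambda_1 = u - \coefa$ (equivalently, of $\coefa + \Afun(h,c)$) along the integral curve with $c$ frozen. Your differential formulation, computing $\dd{\lambda_1}{h} = -\coefa/h - (g+6ch)/(2\coefa) < 0$ directly, is a bit cleaner than the paper's manipulation of the explicit formula for $\Afun$, but the substance is identical.
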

%-------------------------------------------------------------------------------
\begin{proof}
(1) Using the Riemann invariants $c = \frac{\p_{11}}{h^2}$ and $u + \Afun(h, c) $
%\[
%c = \frac{\p_{11}^{\lf}}{h_{\lf}^2} = \frac{\p_{11}^{\rg}}{h_{\rg}^2}, \qquad u_{\lf} + \Afun(h_{\lf}, c) = u_{\rg} + \Afun(h_{\rg}, c)
%\]
the difference of the velocity  can be written as
\begin{eqnarray*}
u_{\lf} - u_{\rg} &=& \Afun(h_{\rg}, c) - \Afun(h_{\lf},c) \\
&=& \sqrt{g h_{\rg} + 3 \p_{11}^{\rg}} - \sqrt{g h_{\lf} + 3 \p_{11}^{\lf}} + \frac{g}{\sqrt{3c}} \left[ \sinh^{-1}\sqrt{ \frac{3 c h_{\rg}}{g}} - \sinh^{-1}\sqrt{ \frac{3 c h_{\lf}}{g}} \right] %\\&\le& \sqrt{g h_{\lf} + 3 \p_{11}^{\lf}} - \sqrt{g h_{\rg} + 3 \p_{11}^{\rg}}
\end{eqnarray*}
For a 1-rarefaction wave, the characteristic speeds must satisfy the condition~\eqref{eq:laxrare}, which leads to
\[
u_{\lf} - \sqrt{g h_{\lf} + 3 \p_{11}^{\lf}} < u_{\rg} - \sqrt{g h_{\rg} + 3 \p_{11}^{\rg}} \quad\implies\quad u_{\lf} - u_{\rg} < \sqrt{g h_{\lf} + 3 \p_{11}^{\lf}} - \sqrt{g h_{\rg} + 3 \p_{11}^{\rg}}
\]
Combining the above two relations, we get
\[
2 \sqrt{g h_{\rg} + 3 c h_{\rg}^2}  + \frac{g}{\sqrt{3c}} \sinh^{-1}\sqrt{ \frac{3 c h_{\rg}}{g}} \le  2 \sqrt{g h_{\lf} + 3 c h_{\lf}^2}  + \frac{g}{\sqrt{3c}} \sinh^{-1}\sqrt{ \frac{3 c h_{\lf}}{g}}
\]
 On the other hand, the function
$f(h,c) = 2 \sqrt{g h + 3 c h^2}  + \frac{g}{\sqrt{3c}} \sinh^{-1}\sqrt{ \frac{3 c h}{g}}$ is a increasing function of $h$ for $c$ fixed and $g>0$ a given constant.
Therefore, the conditions on the characteristic speeds %( $u_{\lf} - u_{\rg} < \sqrt{g h_{\lf} + 3 \p_{11}^{\lf}} - \sqrt{g h_{\rg} + 3 \p_{11}^{\rg}}$ )
is satisfied if and only if $h_{\rg} < h_{\lf}$. The proof is similar for the case of a 6-rarefaction wave.
\end{proof}
%-------------------------------------------------------------------------------
%\paragraph{Remark}
%Since we know $h, \p_{11}, \p_{12}, \detP/h^2$ we can compute $\p_{22}$
%\[
%\p_{22} = \frac{1}{\p_{11}} \left[\p_{12}^2 + h^2 \frac{\detP_{\lf}}{h_{\lf}^2} \right]
%\]
%-------------------------------------------------------------------------------
\subsection{Internal structure of 1-rarefaction}
The solution inside the rarefaction is self-similar and depends only on the ratio $x/t$. The slope of the characteristics is
\[
\xi = \frac{x}{t} = u - \coefa \quad \Longrightarrow \quad
-\frac{\coefa}{h} \ud h = \ud u = \ud\xi + \ud \coefa
\]
where equation  \eqref{eq:rareintcurve}  has been used. From this we obtain the relation
\[
\ud\xi = - \frac{3 g + 12 c h}{2\sqrt{gh + 3 c h^2}} \ud h
\qquad\text{ where } \qquad  c = \frac{\p_{11}^{\lf}}{h_{\lf}^2}
\]
We can integrate this ODE with the initial condition:
$
h(\xi_{\lf}) = h_{\lf}$ and  $\xi_{\lf} = u_{\lf} - \sqrt{g h_{\lf} + 3 \p_{11}^{\lf}}$.\\
We then obtain
\[
\xi - \xi_{\lf} = -\left (\rule{0mm}{4mm} \Bfun(h) - \Bfun(h_{\lf}) \right )
\qquad\text{ with } \qquad
  \Bfun(h) =  2\sqrt{g h + 3 c h^2} + \frac{g}{\sqrt{3c}} \sinh^{-1}\sqrt{ \frac{3 c h}{g}}
\]
This equation implicitly defines the function $h(\xi)$ in the internal structure of the 1-rarefaction. Once $h = h(\xi)$ is obtained, we can use the Riemann invariants to compute all the other variables inside the rarefaction wave leading to the complete solution $\con(\xi)$  for $u_{\lf} - \coefa_{\lf} \leq \xi \leq  u_{\rg} - \coefa_{\rg}$.
%-------------------------------------------------------------------------------
\subsection{Shear waves}
\label{sec:jumps}
The eigenvalues $\lambda_2, \lambda_5$ and the associated eigenvectors give rise to shear waves. Across a shear wave, the water depth $h$ and normal velocity $u$ are continuous while the transverse velocity $v$ may have a jump as shown by the Riemann invariants.
\begin{theorem}
For the 2-shear wave, the Riemann invariants are given by
\[
h, \qquad u, \qquad \p_{11}, \qquad v \sqrt{\p_{11}} + \p_{12}, \qquad \detP
\]
while for the 5-shear wave, they are given by
\[
h, \qquad u, \qquad \p_{11}, \qquad v \sqrt{\p_{11}} - \p_{12}, \qquad \detP
\]
\end{theorem}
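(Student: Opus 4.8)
The plan is to proceed exactly as in the proof for the rarefaction waves, by integrating the ODE system that defines the integral curves of the eigenvector field $\bm{r}_2$ (respectively $\bm{r}_5$) in the primitive variables $\prim = [h,\ u,\ v,\ \p_{11},\ \p_{12},\ \p_{22}]$. For the 2-shear wave the integral curve satisfies
\[
\frac{\ud h}{0} = \frac{\ud u}{0} = \frac{\ud v}{-\coefb} = \frac{\ud \p_{11}}{0} = \frac{\ud \p_{12}}{\coefb^2} = \frac{\ud \p_{22}}{2\p_{12}}
\]
The three vanishing components of $\bm{r}_2$ immediately force $h$, $u$ and $\p_{11}$ to be constant along the curve, which gives the first three Riemann invariants; in particular $\coefb = \sqrt{\p_{11}}$ is constant along the integral curve.

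Next I would obtain the fourth invariant from the relation between $\ud v$ and $\ud\p_{12}$. Equating the third and fifth ratios gives $\coefb^2\,\ud v = -\coefb\,\ud\p_{12}$, and since $\coefb$ is constant this integrates to $\ud(\coefb v + \p_{12}) = 0$, i.e.\ $v\sqrt{\p_{11}} + \p_{12}$ is conserved. For the fifth invariant I would differentiate $\detP = \p_{11}\p_{22} - \p_{12}^2$: using $\ud\p_{11}=0$ together with the relation $\ud\p_{22} = (2\p_{12}/\coefb^2)\,\ud\p_{12} = (2\p_{12}/\p_{11})\,\ud\p_{12}$ coming from the last two ratios, the two surviving terms cancel and $\ud[\detP]=0$, so $\detP$ is the fifth invariant. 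Since a linearly degenerate field of this $6$-dimensional system admits exactly five independent Riemann invariants, this list is complete.

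Finally, for the 5-shear wave the only change is the sign of the third component of the eigenvector ($\bm{r}_5$ carries $+\coefb$ in place of $-\coefb$), so the identical computation yields $\ud(\coefb v - \p_{12})=0$ and hence $v\sqrt{\p_{11}} - \p_{12}$ in place of $v\sqrt{\p_{11}} + \p_{12}$, while $h$, $u$, $\p_{11}$ and $\detP$ remain invariant as before. The whole argument is routine; the only step requiring a little care is the cancellation in $\ud[\detP]$, where one must substitute the integral-curve expression for $\ud\p_{22}$ (and use $\coefb^2=\p_{11}$) before concluding that the differential vanishes.
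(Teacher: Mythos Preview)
Your proof is correct and follows essentially the same route as the paper's: both read off $h,u,\p_{11}$ from the vanishing components of $\bm{r}_2$, combine the $\ud v$ and $\ud\p_{12}$ ratios to get $v\sqrt{\p_{11}}+\p_{12}$, and use the remaining relation (together with $\ud\p_{11}=0$) to show $\ud[\detP]=0$. The only cosmetic difference is that you differentiate $\detP$ and substitute, whereas the paper writes the relation $-2\p_{12}\,\ud\p_{12}+\coefb^2\,\ud\p_{22}=0$ directly and then recognizes it as $\ud[\detP]=0$.
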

\begin{proof}
The integral curve corresponding to the eigenvector $\bm{r}_2$ satisfies the equations
\[
\frac{\ud h}{0} = \frac{\ud u}{0} = \frac{\ud v}{-\coefb} = \frac{\ud \p_{11}}{0} = \frac{\ud \p_{12}}{\coefb^2} = \frac{\ud \p_{22}}{2 \p_{12}}
\]
We immediately see that $h, u, \p_{11}$ are invariants. From the third and fifth terms, we obtain
\[
\coefb \ud v + \ud \p_{12} = 0 \qquad\implies\qquad v \sqrt{\p_{11}} + \p_{12} = \textrm{constant}
\]
Finally from the fifth and sixth terms, we obtain
\[
- 2 \p_{12} \ud \p_{12} + \coefb^2 \ud \p_{22} = 0 \qquad\implies\qquad \detP = \textrm{constant}
\]
The proof for the 5-shear wave is similar.
\end{proof}
%-------------------------------------------------------------------------------
\begin{remark}
Note that, as $h$ and $\p_{11}$ are Riemann invariants of shear waves, the total pressure $\Pres = \frac{gh^2}{2} + \R_{11}$ is also invariant across shear waves. Moreover, across the 2-shear wave, the eigenvalue $\lambda_2 = u - \sqrt{\p_{11}}$ is an invariant, and across the 5-shear wave, the eigenvalue $\lambda_5 = u + \sqrt{\p_{11}}$ is an invariant. It can be checked that any two states $\con_\lf, \con_\rg$ which satisfy the Riemann invariants will satisfy all the jump conditions for the shear waves, with the speed of the discontinuity being $\lambda_2$ or $\lambda_5$.
\end{remark}
%-------------------------------------------------------------------------------
\subsection{Contact waves }
\label{sec:jumpc}
The eigenvalues $\lambda_3, \lambda_4$ and the corresponding eigenvectors give rise to contact waves. Across such a wave the velocity is continuous but the water depth may possibly have a jump discontinuity.
%-------------------------------------------------------------------------------
\begin{theorem}
For the contact wave, the Riemann invariants are given by
\[
u, \qquad  v, \qquad  \R_{12} \quad \text{ and } \quad
 \Pres = \frac{gh^2}{2} + \R_{11}
 \]
 where $\Pres$ is defined as the total pressure.
\end{theorem}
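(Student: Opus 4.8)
The plan is to follow the same integral-curve strategy used above for the shear and rarefaction waves, adapted to the fact that the contact field is linearly degenerate with a \emph{double} eigenvalue $\lambda_3=\lambda_4=u$. A Riemann invariant is a function $w(\prim)$ that is constant along the two-dimensional integral surface generated by $\bm{r}_3$ and $\bm{r}_4$, equivalently one satisfying $\nabla_{\prim} w\cdot\bm{r}_3=0$ and $\nabla_{\prim} w\cdot\bm{r}_4=0$. Since the field has multiplicity two in a six-component system, I expect exactly $6-2=4$ functionally independent invariants, which is precisely the number claimed.

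First I would exploit the simple form $\bm{r}_3=[0,0,0,0,0,1]^\top$, which is just the $\p_{22}$-direction: the condition $\nabla_{\prim}w\cdot\bm{r}_3=0$ forces every invariant to be independent of $\p_{22}$. This reduces the problem to finding the invariants of the single vector field $\bm{r}_4=[-h,\,0,\,0,\,gh+\p_{11},\,\p_{12},\,0]$ acting on the remaining five coordinates $(h,u,v,\p_{11},\p_{12})$. Because the $u$- and $v$-components of $\bm{r}_4$ vanish, $u$ and $v$ are immediately invariant, which accounts for the continuity of the velocity across the contact.

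Next I would write down the characteristic ODE chain coupling the surviving variables,
\[
\frac{\ud h}{-h}=\frac{\ud\p_{11}}{gh+\p_{11}}=\frac{\ud\p_{12}}{\p_{12}}.
\]
Pairing the first and third ratios gives $\ud\ln\p_{12}=-\,\ud\ln h$, hence $h\p_{12}=\R_{12}$ is constant, the third invariant. Pairing the first and second ratios gives the linear ODE $\frac{\ud\p_{11}}{\ud h}+\frac{\p_{11}}{h}=-g$; multiplying by the integrating factor $h$ turns the left side into $\frac{\ud}{\ud h}(h\p_{11})=-gh$, and integrating yields $h\p_{11}+\half g h^2=\text{constant}$, i.e.\ $\Pres=\half g h^2+\R_{11}$ is conserved. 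This is the one place where a little care is needed: the $h$--$\p_{11}$ relation is not separable, so the invariant must be recovered through the integrating factor rather than by the direct quadrature used for the other variables.

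Finally I would confirm that $u$, $v$, $\R_{12}$, and $\Pres$ are functionally independent for $h>0$ (their gradients with respect to $(h,u,v,\p_{11},\p_{12})$ have full rank four), so that together with the established independence of $\p_{22}$ they constitute a \emph{complete} set of four Riemann invariants for the double field. The main conceptual obstacle is the multiplicity-two structure: one must argue that invariants of $\bm{r}_4$ avoiding $\p_{22}$ are automatically invariant along $\bm{r}_3$ as well, so that constancy on individual integral curves upgrades to constancy on the whole integral surface. The splitting in which $\bm{r}_3$ is simply the $\p_{22}$-direction makes this decoupling transparent and reduces the genuinely two-dimensional problem to the familiar single-field computation.
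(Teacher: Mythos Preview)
Your proposal is correct and follows essentially the same integral-curve approach as the paper: reduce via the two eigenvectors to the chain $\frac{\ud h}{-h}=\frac{\ud\p_{11}}{gh+\p_{11}}=\frac{\ud\p_{12}}{\p_{12}}$, read off $u,v$ immediately, and integrate the remaining pairings to obtain $\R_{12}$ and $\Pres$. Your treatment is slightly more explicit than the paper's in two respects---you spell out why $\bm{r}_3$ forces independence from $\p_{22}$ before passing to $\bm{r}_4$, and you recover the $\Pres$ invariant via an integrating factor rather than by recognizing the exact differential $\ud(\tfrac12 g h^2 + h\p_{11})$ directly---but these are presentational refinements, not a different argument.
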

\begin{proof}
The contact wave is associated to the eigenvalue  $u$ with a multiplicity of two with two linearly independent eigenvectors. As the multiplicity is two, we cannot expect more than four Riemann invariants. Indeed, the invariants should satisfy  the following equations due to the two eigenvectors $\bm{r}_3, \bm{r}_4$,
\[
\frac{\ud h}{0} = \frac{\ud u}{0} = \frac{\ud v}{0} = \frac{\ud \p_{11}}{0} = \frac{\ud \p_{12}}{0} = \frac{\ud \p_{22}}{1}
\]
and
\[
\frac{\ud h}{-h} = \frac{\ud u}{0} = \frac{\ud v}{0} = \frac{\ud \p_{11}}{gh+\p_{11}} = \frac{\ud \p_{12}}{\p_{12}} = \frac{\ud \p_{22}}{0}
\]
As a consequence, the Riemann invariants for contact waves are defined
by the following equalities
\[
\frac{\ud h}{-h} = \frac{\ud u}{0} = \frac{\ud v}{0} = \frac{\ud \p_{11}}{gh+\p_{11}} = \frac{\ud \p_{12}}{\p_{12}}
\]
From the second and the third terms of these equalities we obtain the invariants $u$ and $v$. Combining the first and the fifth terms we find that $\R_{12}$ is the third invariant. Finally, the first and the fourth terms give
\[
gh \ud h  + \p_{11} \ud h +  h \ud\p_{11}  = 0 \qquad\Longrightarrow\qquad
\ud\left (  g\frac{h^2}{2} + h\p_{11}
\right ) = 0
\]
so that $ \Pres = \frac{gh^2}{2} + \R_{11}$ is the fourth invariant.
\end{proof}
%-------------------------------------------------------------------------------
\begin{remark}
By definition, the gradients of these Riemann invariants, with respect to
the primitive variable $\prim$, are orthogonal to the plane spanned by
eigenvectors $\bm{r}_3$ and $\bm{r}_4$,
\[
\left \{
\begin{array}{rl}
\bm{r}_3  \cdot \dfrac{ \partial u}{\partial \prim} & = 0 \\[3mm]
\bm{r}_4 \cdot \dfrac{ \partial u}{\partial \prim} & =  0
\end{array}
\right .
, \quad
\left \{
\begin{array}{rl}
\bm{r}_3  \cdot \dfrac{ \partial v}{\partial \prim}  & =  0 \\[3mm]
\bm{r}_4 \cdot \dfrac{ \partial v}{\partial \prim} & =  0
\end{array}
\right .
, \quad
\left \{
\begin{array}{rl}
\bm{r}_3 \cdot \dfrac{ \partial \R_{12}} {\partial \prim} & =  0 \\[3mm]
\bm{r}_4 \cdot \dfrac{ \partial \R_{12}}{\partial \prim}& = 0
\end{array}
\right .
\text{ and }
\left \{
\begin{array}{rl}
\bm{r}_3 \cdot \dfrac{ \partial \Pres}{\partial \prim}  & =  0 \\[3mm]
\bm{r}_4 \cdot \dfrac{ \partial \Pres}{\partial \prim} & = 0
\end{array}
\right .
\]
which can be verified. Moreover, we see that the eigenvalues $\lambda_3 = \lambda_4 = u$ is an invariant.
\end{remark}
%-------------------------------------------------------------------------------
\begin{remark}
Let us examine the jump conditions for the contact wave. The speed of the contact wave is equal to the common fluid velocity $u_\lf = u_\rg = u = \lambda_3 = \lambda_4$ which are linearly degenerate. Then the jump conditions lead to the following set of conditions
\begin{equation}
\jump{\R_{11} + \half g h^2} = 0, \qquad \jump{\R_{12}} = 0, \qquad             \jump{\R_{11} v} + g \avg{hv} \jump{h} = 0, \qquad \jump{\R_{12} v} = 0
\label{eq:jumpmid}
\end{equation}
From the first condition, the total pressure $\Pres$ is constant across this wave. From the first and third conditions, we obtain
\[
\left( \avg{\R_{11}} + \frac{1}{4} g \jump{h}^2 \right) \jump{v} = 0
\]
Since we $\R_{11}$ must be strictly positive, the first factor cannot
be zero and   hence we require that $\jump{v} = 0$, so that both
velocity components are      continuous across the contact wave. The
second condition of~\eqref{eq:jumpmid}   shows that $\R_{12}$ is also
continuous across the middle wave. These results are consistent with
the Riemann invariants derived in the previous theorem.
\end{remark}
%-------------------------------------------------------------------------------
\subsection{Shock wave, Hugoniot curve and entropy condition}
The states $\con_{\lf}, \con_{\rg}$ can be connected by a shock wave
only if they satisfy the Lax condition, i.e., the characteristics must
intersect into the shock wave . For the 1-shock, this condition is
given by
\begin{equation} \label{eq:laxshock}
\lambda_1(\con_{\lf}) > S > \lambda_1(\con_{\rg})
\end{equation}
where $S$ is the shock speed, with a similar condition for the 6-shock wave. Before using this condition, we derive the Hugoniot relation between the two states which follows from the generalized jump conditions after eliminating the velocity.
%-------------------------------------------------------------------------------
\begin{theorem}
The set of states $\con_{\lf}, \con_{\rg}$ which can be connected by a shock lie on the Hugoniot curve given by
\begin{equation}
\frac{3}{2} \jump{\tau \R_{11}} - \avg{\tau} \jump{\R_{11}} + \frac{g \jump{\tau}^3}{4 \tau_{\lf}^2 \tau_{\rg}^2}  = 0
\quad \text{ with } \quad  \tau = \frac{1}{h}
\label{eq:hugoniot}
\end{equation}
%where $\tau = 1/h$.
\end{theorem}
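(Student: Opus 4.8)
The plan is to work only with the three jump conditions \eqref{eq:rh1}, \eqref{eq:rh2} and \eqref{eq:rh4}. The crucial observation is that, since $\E_{11} = \half \R_{11} + \half h u^2$, these three relations involve exclusively the ``normal'' variables $h$, $u$, $\R_{11}$ together with the shock speed $S$; the transverse conditions \eqref{eq:rh3}, \eqref{eq:rh5}, \eqref{eq:rh6} carry $v$, $\R_{12}$, $\R_{22}$ and decouple from them. Hence eliminating $u$ and $S$ from this closed triple should leave a single algebraic relation between $h$ (equivalently $\tau = 1/h$) and $\R_{11}$ on the two sides, which is precisely the sought Hugoniot curve. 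I would treat the $1$-shock and $6$-shock together, since the derivation does not distinguish the branch (the Lax condition selects it afterwards).

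First I would pass to the shock frame through the relative normal velocity $w = u - S$. Condition \eqref{eq:rh1} states that the mass flux $m := h w = h(u-S)$ is continuous, so $m$ is a single constant and $\jump{u} = \jump{w} = m\jump{\tau}$. Substituting this into \eqref{eq:rh2}, and using $hu^2 - Shu = hu(u-S) = m u$ together with the constancy of $m$, I expect \eqref{eq:rh2} to collapse to the compact momentum relation
\[
\jump{\R_{11} + \half g h^2} = -m\,\jump{u} = -m^2\,\jump{\tau}.
\]
This plays the role of the classical relation $m^2 = -\jump{p}/\jump{\tau}$ and is the tool for removing $m$ at the end.

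Next I would rewrite the energy condition \eqref{eq:rh4}. Using $(\E_{11}+\R_{11})u - S\E_{11} = \E_{11} w + \R_{11}u$, the identity $\half h u^2 w = \half m u^2$, and $\avg{hu}\jump{h} = m\jump{h} + \half S\jump{h^2}$ (from $hu = m + Sh$), the condition should reduce, after repeated use of the bilinear identity $\jump{ab} = \avg{a}\jump{b} + \avg{b}\jump{a}$, to a form in which every occurrence of $\avg{u}$ and $S$ is eliminated: the $\avg{u}$-terms combine through the momentum relation into $-\half g\avg{u}\jump{h^2}$, while $S - \avg{u} = -\avg{w} = -m\avg{\tau}$ absorbs the residual $S$. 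Dividing by $m \ne 0$ should then give
\[
\half\jump{\tau\R_{11}} + \avg{\R_{11}}\jump{\tau} - \half g\avg{\tau}\jump{h^2} + g\jump{h} = 0.
\]
Rewriting $\avg{\R_{11}}\jump{\tau} = \jump{\tau\R_{11}} - \avg{\tau}\jump{\R_{11}}$ turns the first two terms into $\frac{3}{2}\jump{\tau\R_{11}} - \avg{\tau}\jump{\R_{11}}$, matching the first two terms of \eqref{eq:hugoniot}.

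The final step is to show the gravity contribution collapses to the stated cubic. Writing $h = 1/\tau$ gives $-\half g\avg{\tau}\jump{h^2} + g\jump{h} = \frac{g(\tau_\lf - \tau_\rg)}{\tau_\lf^2\tau_\rg^2}\bigl[\tau_\lf\tau_\rg - \frac{1}{4}(\tau_\lf+\tau_\rg)^2\bigr]$, and the bracket factors as $-\frac{1}{4}(\tau_\lf-\tau_\rg)^2$, so the whole term equals $\frac{g\jump{\tau}^3}{4\tau_\lf^2\tau_\rg^2}$, yielding \eqref{eq:hugoniot}. I expect the main obstacle to be purely the algebraic bookkeeping in the energy step --- tracking which average/jump products cancel once the momentum relation is substituted --- rather than anything conceptual; the structural fact that \eqref{eq:rh1}, \eqref{eq:rh2}, \eqref{eq:rh4} close among the normal variables is what makes the elimination possible at all.
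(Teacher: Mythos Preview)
Your proposal is correct and follows essentially the same route as the paper: you use exactly the same three ``normal'' jump conditions \eqref{eq:rh1}, \eqref{eq:rh2}, \eqref{eq:rh4}, introduce the constant mass flux $m = h(u-S)$, eliminate the velocity from the energy relation via the momentum relation, and then simplify the gravity remainder using $\avg{\tau}^2 - \tau_\lf\tau_\rg = \tfrac14\jump{\tau}^2$. The only cosmetic difference is that the paper passes to the stationary shock frame at the outset (so $S$ never appears and $m = hu$ directly), which shortens the energy bookkeeping you flag as the main obstacle; your version carries $S$ and $w$ separately and eliminates them later, but the substance is identical.
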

\begin{proof}
Let us change to a coordinate frame in which the shock is stationary. The jump conditions for the continuity, $x$-momentum and $x$ component of energy equation are
\[
\jump{hu}=0, \qquad \jump{\R_{11} + h u^2 + \half g h^2} = 0, \qquad \jump{(\E_{11} + \R_{11})u} + g \avg{hu} \jump{h} = 0
\]
Let $m = h_{\lf} u_{\lf} = h_{\rg} u_{\rg}$, then the second and third conditions can be written as
\[
\jump{\R_{11}} + m \jump{u} + g \avg{h} \jump{h} = 0, \qquad \frac{3}{2} \jump{\p_{11}} + \avg{u} \jump{u} + g \jump{h} = 0
\]
Let $\tau = 1/h$; then
\[
\avg{h} = \avg{1/\tau} = \frac{1}{\tau_{\lf} \tau_{\rg}} \avg{\tau}, \qquad \jump{h} = \jump{1/\tau} = - \frac{1}{\tau_{\lf} \tau_{\rg}} \jump{\tau}
\]
The two jump conditions become
\[
\jump{\R_{11}} + m \jump{u} - \frac{g}{\tau_{\lf}^2 \tau_{\rg}^2} \avg{\tau} \jump{\tau} = 0, \qquad \frac{3}{2} \jump{\tau \R_{11}} + \avg{u} \jump{u} - \frac{g}{\tau_{\lf} \tau_{\rg}} \jump{\tau} = 0
\]
Using the first equation, we eliminate $\jump{u}$ from the second equation
\[
\frac{3}{2} \jump{\tau \R_{11}} + \frac{\avg{u}}{m} \left( \frac{g}{\tau_{\lf}^2 \tau_{\rg}^2} \avg{\tau} \jump{\tau} - \jump{\R_{11}} \right) - \frac{g}{\tau_{\lf} \tau_{\rg}} \jump{\tau} = 0
\]
But since $\avg{u}/m = \avg{\tau}$, we get
\[
\frac{3}{2} \jump{\tau \R_{11}} - \avg{\tau} \jump{\R_{11}} + \frac{g}{\tau_{\lf} \tau_{\rg}} \jump{\tau} \left( \frac{\avg{\tau}^2}{\tau_{\lf} \tau_{\rg}} - 1 \right) = 0
\]
which upon simplification of the last term yields the Hugoniot curve~\eqref{eq:hugoniot}.

\end{proof}
%-------------------------------------------------------------------------------
We now find some constraints on the two states imposed by the Lax entropy condition if they have to be connected by a shock wave.
%-------------------------------------------------------------------------------
\begin{theorem} Any given admissible left and right states $\con_{\lf}, \con_{\rg}$ can be connected by a
\begin{itemize}
\item  1-shock wave if $h_{\rg} \in (h_{\lf}, 2 h_{\lf})$.
\item  6-shock wave if $h_{\lf} \in (h_{\rg}, 2 h_{\rg})$.
\end{itemize}
\end{theorem}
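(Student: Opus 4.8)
The plan is to work in the frame where the shock is stationary and to use only the three longitudinal jump conditions (continuity, $x$-momentum, $x$-energy), since the bounds on $h$ involve only $h$ and $\p_{11}$; these are exactly the relations from which the Hugoniot curve~\eqref{eq:hugoniot} was obtained. First I would regard the left state as fixed and solve~\eqref{eq:hugoniot} for the downstream longitudinal stress $\R_{11}^{\rg}$ as a function of the density ratio $x = h_{\lf}/h_{\rg} = \tau_{\rg}/\tau_{\lf}$, where $\tau = 1/h$. Since~\eqref{eq:hugoniot} is \emph{linear} in $\R_{11}^{\rg}$ with leading coefficient proportional to $(2\tau_{\rg}-\tau_{\lf})$, this yields an explicit rational expression $\R_{11}^{\rg}=\R_{11}^{\rg}(x)$ whose denominator vanishes exactly at $x=\tfrac12$, i.e. at $h_{\rg}=2h_{\lf}$.

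The upper bound is then a positivity statement. Admissibility requires $\R_{11}^{\rg}>0$ (necessary for $\R_{\rg}$ to be positive definite). A short sign analysis of $\R_{11}^{\rg}(x)$ shows that its numerator stays positive for $x\le 1$ while the denominator changes sign at $x=\tfrac12$; consequently $\R_{11}^{\rg}\to+\infty$ as $h_{\rg}\uparrow 2h_{\lf}$ and $\R_{11}^{\rg}<0$ once $h_{\rg}>2h_{\lf}$. Hence any admissible 1-shock must satisfy $h_{\rg}<2h_{\lf}$. This step is routine once the explicit formula is available.

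The lower bound $h_{\rg}>h_{\lf}$ (the shock must be compressive) is where the Lax condition~\eqref{eq:laxshock} enters, and I expect this to be the main obstacle. Writing $\hat u = u-S$ and $m=h\hat u$ (constant across the shock), the Lax inequalities for the $1$-field read $\hat u_{\lf}>\coefa_{\lf}$ and $\hat u_{\rg}<\coefa_{\rg}$, so in particular $m>0$. I would argue by contradiction: assume the shock is expansive, $\tau_{\rg}>\tau_{\lf}$ (that is $h_{\rg}<h_{\lf}$). Multiplying by $m>0$ gives $\hat u_{\rg}=m\tau_{\rg}>m\tau_{\lf}=\hat u_{\lf}>\coefa_{\lf}$, which with $\hat u_{\rg}<\coefa_{\rg}$ forces $\coefa_{\rg}>\coefa_{\lf}$; since $h_{\rg}<h_{\lf}$ this can only hold if $\p_{11}^{\rg}>\p_{11}^{\lf}$. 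On the other hand, the $x$-energy jump relation used in the proof of~\eqref{eq:hugoniot} can be rewritten in the stationary frame as
\[
\tfrac{3}{2}\jump{\p_{11}} = \jump{\tau}\left( \frac{g}{\tau_{\lf}\tau_{\rg}} - m^2\avg{\tau} \right).
\]
The supersonic bound $\hat u_{\lf}>\coefa_{\lf}$ gives $m^2>\coefa_{\lf}^2/\tau_{\lf}^2>g/\tau_{\lf}^3$, while in the expansive case $\avg{\tau}\,\tau_{\rg}>\tau_{\lf}^2$; together these yield $m^2\avg{\tau}>g/(\tau_{\lf}\tau_{\rg})$, so the bracket is negative. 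Since $\jump{\tau}>0$, this forces $\jump{\p_{11}}<0$, i.e. $\p_{11}^{\rg}<\p_{11}^{\lf}$, contradicting $\p_{11}^{\rg}>\p_{11}^{\lf}$. Therefore the admissible $1$-shock is necessarily compressive, $h_{\rg}>h_{\lf}$.

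Combining the two bounds gives $h_{\rg}\in(h_{\lf},2h_{\lf})$; conversely every $h_{\rg}$ in this interval determines, through $\R_{11}^{\rg}(x)$ and the remaining jump conditions, an admissible state, and one checks that the characteristic and shock speeds satisfy the strict Lax inequalities on this compressive branch. Finally, the $6$-shock statement follows from the reflection symmetry $x\mapsto -x$, $u\mapsto -u$ of the SSW system, which interchanges the $1$- and $6$-fields and the left and right states, mapping $h_{\rg}\in(h_{\lf},2h_{\lf})$ to $h_{\lf}\in(h_{\rg},2h_{\rg})$.
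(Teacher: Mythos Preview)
Your argument is correct and shares the upper-bound step with the paper (solve the Hugoniot curve for $\R_{11}^{\rg}$ and read off positivity), but your treatment of the lower bound $h_{\rg}>h_{\lf}$ is genuinely different. The paper encodes the second Lax inequality as a lower-bound function $\R_{11}^\star(\tau)=\tfrac{\tau}{3}\bigl(g/\tau_{\lf}^3-g/\tau^3+3\R_{11}^{\lf}/\tau_{\lf}\bigr)$, shows that on $(\tfrac12\tau_{\lf},\infty)$ the Hugoniot function $\R_{11}(\tau)$ is strictly decreasing while $\R_{11}^\star(\tau)$ is strictly increasing, and that the two coincide at $\tau=\tau_{\lf}$; this monotonicity comparison yields the \emph{if and only if} statement $\R_{11}(\tau)>\R_{11}^\star(\tau)\Leftrightarrow\tau\in(\tfrac12\tau_{\lf},\tau_{\lf})$ in one stroke. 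Your route instead runs a direct contradiction from the two Lax inequalities and the stationary-frame energy jump $\tfrac32\jump{\p_{11}}=\jump{\tau}\bigl(g/(\tau_{\lf}\tau_{\rg})-m^2\avg{\tau}\bigr)$, which is cleaner and avoids differentiating the Hugoniot formula, but it gives only the necessary direction; the sufficiency (``one checks that the strict Lax inequalities hold on the compressive branch'') is left as an assertion, whereas in the paper it falls out of the same monotone-crossing picture. Either way the content is the same; your symmetry argument for the $6$-shock is also fine, while the paper simply says the $6$-shock case ``follows similarly.''
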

\begin{proof}
Given the left state $(\tau_{\lf}, \R_{11}^{\lf})$ the Hugoniot curve gives the set of right states $(\tau_{\rg}, \R_{11}^{\rg})$ that can be connected to it by a shock. Using the Hugoniot curve, we can  obtain the stress component at the right state as
\[
\R_{11}^{\rg} = \frac{1}{2 \tau_{\rg} - \tau_{\lf}} \left[ (2 \tau_{\lf} - \tau_{\rg}) \R_{11}^{\lf} - \frac{g \jump{\tau}^3}{2\tau_{\lf}^2 \tau_{\rg}^2} \right]
= R_{11}(\tau_{\rg}; \tau_{\lf}, \R_{11}^{\lf})
\]
where
\begin{equation}
\R_{11}(\tau; \tau_{\lf}, \R_{11}^{\lf}) = \frac{1}{2 \tau - \tau_{\lf}} \left[ (2 \tau_{\lf} - \tau) \R_{11}^{\lf} - \frac{g (\tau - \tau_{\lf})^3}{2\tau_{\lf}^2 \tau^2} \right]
\label{eq:r11shock}
\end{equation}
When $\tau_{\rg} = \half \tau_{\lf}$ we have $\R_{11}^{\rg} = \infty$ and moreover $\R_{11}^{\rg} < 0$ for $\tau_{\rg} < \half \tau_{\lf}$. Hence from positivity requirement, the admissible range of values for $\tau_{\rg}$ is such that $ \tau_{\rg} > \half \tau_{\lf}$.

The Lax entropy condition says that characteristics must enter into the shock curve which means that, if $S$ is the shock speed, we have
\[
u_{\lf} - \sqrt{g h_{\lf} + 3 \p_{11}^{\lf}} \quad > \quad S \quad > \quad u_{\rg} - \sqrt{g h_{\rg} + 3 \p_{11}^{\rg}}
\]
from which we obtain two Lax inequalities
\[
u_{\lf} - S > \sqrt{g h_{\lf} + 3 \p_{11}^{\lf}} > 0 \qquad  \text{ and } \qquad u_{\rg} - S < \sqrt{g h_{\rg} + 3 \p_{11}^{\rg}}
\]
The first Lax inequality shows that the left state is the pre-shock state, since the velocity relative to the shock is positive. Using the jump condition of the continuity equation, $h_{\lf}(u_{\lf}-S) = h_{\rg}(u_{\rg} - S)$, we get
\[
u_{\rg} - S > \frac{h_{\lf}}{h_{\rg}} \sqrt{g h_{\lf} + 3 \p_{11}^{\lf}}
\]
Combining this with the second Lax inequality, we get
\[
g h_{\lf}^3 + 3 h_{\lf} \R_{11}^{\lf} < g h_{\rg}^3 + 3 h_{\rg} \R_{11}^{\rg} \qquad \Longrightarrow \qquad
\R_{11}^{\rg} > \R_{11}^\star(\tau_{\rg}; \tau_{\lf}, \R_{11}^{\lf})
\]
where the function $\R_{11}^\star(\tau; \tau_{\lf}, \R_{11}^{\lf}) $ is defined by
\[
\R_{11}^\star(\tau; \tau_{\lf}, \R_{11}^{\lf}) =  \frac{\tau}{3} \left[ \frac{g}{\tau_{\lf}^3} - \frac{g}{\tau^3} + 3 \frac{\R_{11}^{\lf}}{\tau_{\lf}} \right]
\]
The entropy condition (second Lax inequality) requires that
\[
\R_{11}^{\rg} = \R_{11}(\tau_{\rg}; \tau_{\lf}, \R_{11}^{\lf}) > \R_{11}^s(\tau_{\rg}; \tau_{\lf}, \R_{11}^{\lf})
\]
Now\footnote{We are not interested in the case $\tau \le \half\tau_{\lf}$.}
\[
\dd{}{\tau}\R_{11}(\tau; \tau_{\lf}, \R_{11}^{\lf}) = - \frac{g (\tau - \tau_{\lf})^2 (4 \tau - \tau_{\lf}) + 6 \tau_{\lf}^4 \R_{11}^{\lf}}{2 \tau_{\lf}^3 (2 \tau - \tau_{\lf})^2} < 0, \qquad \tau > \half \tau_{\lf}
\]
and
\[
\dd{}{\tau}\R_{11}^\star(\tau; \tau_{\lf}, \R_{11}^{\lf}) = \frac{g}{3}\left( \frac{2}{\tau^3} + \frac{1}{\tau_{\lf}^3} \right) + \frac{\R_{11}^{\lf}}{\tau_{\lf}} > 0, \qquad \tau > 0
\]
This shows that for $\tau > \half \tau_{\lf}$, $\R_{11}(\tau; \tau_{\lf}, \R_{11}^{\lf})$ is a decreasing function and $\R_{11}^\star(\tau; \tau_{\lf}, \R_{11}^{\lf})$ is an increasing function; moreover $\R_{11}(\half\tau_{\lf}; \tau_{\lf}, \R_{11}^{\lf}) = \infty > \R_{11}^\star(\half\tau_{\lf}; \tau_{\lf}, \R_{11}^{\lf})$ and $\R_{11}(\tau_{\lf}; \tau_{\lf}, \R_{11}^{\lf}) = \R_{11}^\star(\tau_{\lf}; \tau_{\lf}, \R_{11}^{\lf})$. Hence
\[
\R_{11}(\tau; \tau_{\lf}, \R_{11}^{\lf}) > \R_{11}^\star(\tau; \tau_{\lf}, \R_{11}^{\lf}), \qquad \textrm{if and only if } \tau \in \left(\half \tau_{\lf}, \tau_{\lf}\right)
\]
The admissible range of values for $\tau_{\rg}$ is $\left(\half \tau_{\lf}, \tau_{\lf} \right)$ and hence $h_{\rg} \in (h_{\lf}, 2 h_{\lf})$. Across a shock wave, the water depth $h$ can at most increase by a factor of less than two.

The proof for the 6-shock case follows similarly.
\end{proof}
%-------------------------------------------------------------------------------
\begin{lemma} \ \\
(1) If the left and right states $\con_{\lf},\con_{\rg}$ are connected by a 1-shock, then:
$
u_{\rg} < u_{\lf} \quad \text{ and } \quad \Pres_{\rg} > \Pres_{\lf}
$ \\
(2) If the left and right states $\con_{\lf},\con_{\rg}$ are connected by a 6-shock, then:
$
u_{\lf} > u_{\rg} \quad \text{ and } \quad \Pres_{\lf} > \Pres_{\rg}
$\\
(3) Moreover, in either case, we have
\begin{equation}
u_{\lf} - u_{\rg} = \sqrt{ \frac{(h_{\rg}-h_{\lf})(\Pres_{\rg} - \Pres_{\lf})}{h_{\rg} h_{\lf}} }
\label{eq:shockulur}
\end{equation}
\end{lemma}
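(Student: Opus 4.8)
The plan is to work in the frame moving with the shock, exactly as in the proof of the Hugoniot relation~\eqref{eq:hugoniot}, so that the continuity condition becomes $m := h_\lf(u_\lf - S) = h_\rg(u_\rg - S)$ with $\tau = 1/h$. First I would rewrite the normal--momentum jump condition $\jump{\R_{11} + h u^2 + \half g h^2} = 0$ using $h u^2 = m^2/h = m^2 \tau$, which collapses to $\jump{\Pres} + m^2 \jump{\tau} = 0$, i.e. $m^2 = -\jump{\Pres}/\jump{\tau}$, since $\Pres = \half g h^2 + \R_{11}$ and hence $\jump{\Pres} = \half g \jump{h^2} + \jump{\R_{11}}$. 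Because both the velocity difference $u_\lf - u_\rg$ and the total pressure $\Pres$ are Galilean invariant, this identity transfers back to the original frame unchanged.

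Next I would establish part (3): from continuity $u_\lf - u_\rg = m(\tau_\lf - \tau_\rg) = -m\jump{\tau}$, so squaring and inserting $m^2 = -\jump{\Pres}/\jump{\tau}$ gives $(u_\lf - u_\rg)^2 = -\jump{\Pres}\jump{\tau}$. Writing $\jump{\tau} = (h_\lf - h_\rg)/(h_\lf h_\rg)$ turns this into precisely~\eqref{eq:shockulur}, up to the choice of sign in the square root.

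For the sign statements I would invoke the admissibility ranges from the previous theorem: a $1$-shock has $h_\rg \in (h_\lf, 2 h_\lf)$, hence $\jump{\tau} < 0$, while a $6$-shock has $h_\lf \in (h_\rg, 2 h_\rg)$, hence $\jump{\tau} > 0$. Since $m^2 = -\jump{\Pres}/\jump{\tau} > 0$, the pressure jump must carry the opposite sign to $\jump{\tau}$, which forces $\Pres_\rg > \Pres_\lf$ for the $1$-shock and $\Pres_\lf > \Pres_\rg$ for the $6$-shock. For the velocity ordering I would read the sign of $m$ off the Lax condition~\eqref{eq:laxshock}: for a $1$-shock, $\lambda_1(\conl) > S$ gives $u_\lf - S > \coefa_\lf > 0$, so $m > 0$ and $u_\lf - u_\rg = -m\jump{\tau} > 0$; for a $6$-shock the analogous inequality $S > \lambda_6(\conr)$ gives $u_\rg - S < -\coefa_\rg < 0$, so $m < 0$ and again $u_\lf - u_\rg = -m\jump{\tau} > 0$. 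This selects the positive square root in~\eqref{eq:shockulur} and yields $u_\rg < u_\lf$ in both cases.

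The algebra here is routine; the only genuine care is the bookkeeping of signs. The hard part will be correctly identifying which state is the pre-shock state in each case, since the sign of $m$ flips between the $1$- and $6$-shocks even though the velocity difference $u_\lf - u_\rg$ stays positive, together with the small point that $m \neq 0$ (otherwise the strict Lax inequality would be violated), which is what makes the pressure jumps strict.
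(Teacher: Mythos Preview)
Your argument is correct. Both proofs hinge on the continuity relation $m = h_\lf(u_\lf - S) = h_\rg(u_\rg - S)$ together with the Lax inequality to fix the sign of $m$, and on the admissibility ranges $h_\rg \in (h_\lf, 2h_\lf)$ (resp.\ $h_\lf \in (h_\rg, 2h_\rg)$) from the preceding theorem to fix the sign of $\jump{\tau}$. The routes diverge in how the pressure inequality and the formula~\eqref{eq:shockulur} are extracted. The paper obtains $\Pres_\rg > \Pres_\lf$ for the $1$-shock by plugging the explicit Hugoniot expression for $\R_{11}^\rg$ into $\Pres_\rg - \Pres_\lf$ and checking the sign of each term on the admissible range, and then derives~\eqref{eq:shockulur} separately by eliminating $S$ between the mass and $x$-momentum jump conditions to produce a quadratic in $u_\rg$, whose correct root is selected a posteriori. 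You instead isolate the single identity $m^2 = -\jump{\Pres}/\jump{\tau}$ from the momentum jump; this simultaneously yields the pressure ordering (since $m^2 > 0$ forces $\jump{\Pres}$ and $\jump{\tau}$ to have opposite signs) and, after combining with $u_\lf - u_\rg = -m\jump{\tau}$, the formula~\eqref{eq:shockulur} with the sign already determined. Your version is more economical and avoids both the explicit Hugoniot algebra and the quadratic; the paper's computation has the minor advantage of making the dependence of $\Pres_\rg - \Pres_\lf$ on $z = h_\rg/h_\lf$ explicit, which it reuses later in~\eqref{eq:dtotpsl}.
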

\begin{proof} \ \\
(1) The jump condition of the continuity equation, $h_{\lf}(u_{\lf}-S) = h_{\rg}(u_{\rg} - S)$, when applied to the 1-wave, gives
\[
u_{\rg} = \underbrace{\frac{h_{\lf}}{h_{\rg}}}_{\in (\half,1)} \underbrace{(u_{\lf} - S)}_{>0} + S \le u_{\lf} - S + S = u_{\lf}
\]
Thus the post-shock velocity $u_{\rg}$ is smaller than the pre-shock velocity $u_{\lf}$.
The total pressure is  defined as $\Pres = \R_{11} + \half g h^2$. Then, using the Hugoniot curve, we have
\[
\Pres_{\rg} -  \Pres_{\lf} = \frac{3(h_{\rg} - h_{\lf})}{2 h_{\lf} - h_{\rg}} \R_{11}^{\lf} + \half g \frac{h_{\lf} (h_{\rg}^2-4 h_{\rg} h_{\lf} + 3 h_{\lf}^2)}{h_{\rg} - 2 h_{\lf}}
\]
and since $h_{\rg} \in (h_{\lf}, 2 h_{\lf})$, both terms on the right of the above equation are positive, so that $\Pres_{\rg} > \Pres_{\lf}$. \\
\noindent
(2) In the context of a 6-shock, we have
\[
u_{\lf} = \underbrace{\frac{h_{\rg}}{h_{\lf}}}_{\in (\half,1)} \underbrace{(u_{\rg} - S)}_{<0} + S \ge u_{\rg} - S + S = u_{\rg}
\]
In this context, ``$\rg$" is pre-shock state and ``$\lf$" is post-shock state.
Similarly as for the 1-wave, we obtain that   $u_{\lf} < u_{\rg}$ and $\Pres_{\rg} > \Pres_{\lf}$.\\
(3) Dividing the jump conditions for continuity  and $x$-momentum equations, we get
\[
(\Pres_{\rg} + h_{\rg} u_{\rg}^2 - \Pres_{\lf} - h_{\lf} u_{\lf}^2)(h_{\rg} - h_{\lf}) = h_{\rg}^2 u_{\rg}^2 + h_{\lf}^2 u_{\lf}^2 - 2 h_{\lf} h_{\rg} u_{\lf} u_{\rg}
\]
Simplifying we obtain a quadratic equation
\[
u_{\rg}^2 - 2 u_{\lf} u_{\rg} + u_{\lf}^2 - \frac{(h_{\rg}-h_{\lf})(\Pres_{\rg} - \Pres_{\lf})}{h_{\rg} h_{\lf}} = 0
\]
whose solution is
\[
u_{\rg} = u_{\lf} \pm \sqrt{ \frac{(h_{\rg}-h_{\lf})(\Pres_{\rg} - \Pres_{\lf})}{h_{\rg} h_{\lf}} }
\]
If we pick the minus sign, then we satisfy the conditions in part (1) and (2) of the lemma which yields~\eqref{eq:shockulur}.
\end{proof}
%-------------------------------------------------------------------------------
%-------------------------------------------------------------------------------
\section{Exact solution of 1-D Riemann problem}
The Riemann problem is an initial value problem where the initial data is discontinuous at a single point.  The Riemann problem is to find $\con(t,x)$ solution of the SSW system~\eqref{eq:ssw1d}, with the following initial data
\begin{equation} \label{eq:rp1d}
  \con(t=0,x) =  \left \{
    \begin{array}{rcl}
         \con_{\lf} & \text { if } & x<0\\[3mm]
         \con_{\rg} & \text { if } & x>0
    \end{array}
    \right .
\end{equation}
%-------------------------------------------------------------------------------
\begin{lemma}
The solution of the Riemann problem with states $\con_{\lf},
\con_{\rg}$ gives rise to four intermediate states denoted by
$\con_{*\lf}, \con_{**\lf}, \con_{**\rg}, \con_{*\rg}$ which satisfy
the following ten relations, see Figure~\ref{SSW_RPm_Waves}.
\[
u_{*\lf} = u_{**\lf} = u_{**\rg} = u_{*\rg}, \qquad
\Pres_{*\lf} =\Pres_{**\lf} = \Pres_{**\rg} = \Pres_{*\rg}
\]
\[ h_{*\lf} = h_{**\lf}, \quad h_{**\rg} = h_{*\rg},
\quad v_{**\lf} = v_{**\rg}, \quad \R_{12}^{**\lf} = \R_{12}^{**\rg}.
\]
Using the definition of the total
pressure, a consequence the previous relations is that $\R_{11}^{*\lf}
= \R_{11}^{**\lf}$ and  $\R_{11}^{**\rg} = \R_{11}^{*\rg}$.
\end{lemma}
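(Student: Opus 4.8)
The plan is to read off all ten relations directly from the Riemann invariants of the three linearly degenerate waves, since the four intermediate states are precisely the states separated by these waves. Referring to Figure~\ref{SSW_RP_Waves}, the ordering from left to right is $\con_{*\lf}$, then the 2-shear wave, then $\con_{**\lf}$, then the contact wave, then $\con_{**\rg}$, then the 5-shear wave, then $\con_{*\rg}$; the genuinely non-linear 1-wave and 6-wave sit at the two extremities, connecting $\con_\lf$ to $\con_{*\lf}$ and $\con_{*\rg}$ to $\con_\rg$, and therefore play no role in the relations among the four starred states. This is what will let the same set of relations hold whether the outer waves are shocks or rarefactions.

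First I would invoke the shear-wave theorem. Across the 2-shear wave, $h$, $u$ and $\p_{11}$ are Riemann invariants, which immediately gives $h_{*\lf}=h_{**\lf}$ and $u_{*\lf}=u_{**\lf}$; by the remark following that theorem the total pressure $\Pres=\frac{gh^2}{2}+\R_{11}$ is likewise invariant (since $\R_{11}=h\p_{11}$ and both factors are conserved), so $\Pres_{*\lf}=\Pres_{**\lf}$. The identical argument applied to the 5-shear wave yields $h_{**\rg}=h_{*\rg}$, $u_{**\rg}=u_{*\rg}$ and $\Pres_{**\rg}=\Pres_{*\rg}$. Next I would use the contact-wave theorem, whose Riemann invariants are $u$, $v$, $\R_{12}$ and $\Pres$. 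Applied across the contact wave separating $\con_{**\lf}$ and $\con_{**\rg}$, this produces at once $u_{**\lf}=u_{**\rg}$, $v_{**\lf}=v_{**\rg}$, $\R_{12}^{**\lf}=\R_{12}^{**\rg}$ and $\Pres_{**\lf}=\Pres_{**\rg}$.

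It then remains to chain these local statements. The three velocity equalities combine to give $u_{*\lf}=u_{**\lf}=u_{**\rg}=u_{*\rg}$, and the three total-pressure equalities combine to give $\Pres_{*\lf}=\Pres_{**\lf}=\Pres_{**\rg}=\Pres_{*\rg}$; together with $h_{*\lf}=h_{**\lf}$, $h_{**\rg}=h_{*\rg}$, $v_{**\lf}=v_{**\rg}$ and $\R_{12}^{**\lf}=\R_{12}^{**\rg}$ these exhaust the ten stated relations. The final consequence follows by solving $\Pres=\frac{gh^2}{2}+\R_{11}$ for $\R_{11}$: since $h$ and $\Pres$ each agree on the two sides of a shear wave, one obtains $\R_{11}^{*\lf}=\R_{11}^{**\lf}$ and $\R_{11}^{**\rg}=\R_{11}^{*\rg}$.

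There is no genuine analytic obstacle here; the lemma is essentially a bookkeeping statement assembled from results already proven. The only points requiring care are to match each pair of adjacent starred states with the correct linearly degenerate wave, and to recall that total-pressure invariance across the shear waves is a \emph{derived} fact (from invariance of $h$ and $\p_{11}$) rather than part of the original invariant list in the shear-wave theorem. I would therefore state explicitly that the 1- and 6-waves, being confined to the outer regions, cannot affect any of the ten relations, so that the conclusion is independent of the shock/rarefaction nature of those waves.
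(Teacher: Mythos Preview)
Your argument is correct and is exactly the natural one: the paper does not supply an explicit proof for this lemma, but the relations follow directly from the Riemann invariants of the shear and contact waves established in Sections~\ref{sec:jumps} and~\ref{sec:jumpc}, which is precisely what you do. Your bookkeeping is accurate, including the point that total-pressure invariance across the shear waves is a consequence of the invariance of $h$ and $\p_{11}$ rather than a listed invariant.
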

%\textcolor{red}{ USING primitive variables, PUT here something similar to the table 2 of the JCP paper. }.
%-------------------------------------------------------------------------------
 \begin{figure}[tbp]
 \begin{center}
  \begin{tikzpicture}[scale=3.80]
 \coordinate (X0) at (   0.00000000     ,   0.00000000     );
 \coordinate (Xm) at (  -1.40129846E-45 ,   0.00000000     );
 \coordinate (Xp) at (   1.40129846E-45 ,   0.00000000     );
 \coordinate (Xn) at (   0.00000000     ,   1.40129846E-45 );
 \coordinate (Xd001) at (  -1.61687970     ,   0.00000000     );
 \coordinate (Xu001) at (  -1.61687970     ,   1.45000005     );
 \coordinate (Xd002) at (  -1.56445384     ,   0.00000000     );
 \coordinate (Xu002) at (  -1.56445384     ,   1.45000005     );
 \coordinate (Xd003) at (  -1.51500678     ,   0.00000000     );
 \coordinate (Xu003) at (  -1.51500678     ,   1.45000005     );
 \coordinate (Xd004) at (  -1.46828258     ,   0.00000000     );
 \coordinate (Xu004) at (  -1.46828258     ,   1.45000005     );
 \coordinate (Xd005) at (  -1.42405391     ,   0.00000000     );
 \coordinate (Xu005) at (  -1.42405391     ,   1.45000005     );
 \coordinate (Xd006) at (  -1.38211858     ,   0.00000000     );
 \coordinate (Xu006) at (  -1.38211858     ,   1.45000005     );
 \coordinate (Xd007) at (  -1.34229493     ,   0.00000000     );
 \coordinate (Xu007) at (  -1.34229493     ,   1.45000005     );
 \coordinate (Xd008) at (  -1.30442047     ,   0.00000000     );
 \coordinate (Xu008) at (  -1.30442047     ,   1.45000005     );
 \coordinate (Xd009) at (  -1.26834857     ,   0.00000000     );
 \coordinate (Xu009) at (  -1.26834857     ,   1.45000005     );
 \coordinate (Xd010) at (  -1.23394716     ,   0.00000000     );
 \coordinate (Xu010) at (  -1.23394716     ,   1.45000005     );
 \coordinate (Xd011) at (  -1.20109665     ,   0.00000000     );
 \coordinate (Xu011) at (  -1.20109665     ,   1.45000005     );
 \coordinate (Xd012) at (  -1.16968858     ,   0.00000000     );
 \coordinate (Xu012) at (  -1.16968858     ,   1.45000005     );
 \coordinate (Xd013) at (  -1.13962424     ,   0.00000000     );
 \coordinate (Xu013) at (  -1.13962424     ,   1.45000005     );
 \coordinate (Xd014) at (  -1.11081374     ,   0.00000000     );
 \coordinate (Xu014) at (  -1.11081374     ,   1.45000005     );
 \coordinate (Xd015) at (  -1.08317518     ,   0.00000000     );
 \coordinate (Xu015) at (  -1.08317518     ,   1.45000005     );
 \coordinate (Xd016) at (  -1.05663335     ,   0.00000000     );
 \coordinate (Xu016) at (  -1.05663335     ,   1.45000005     );
 \coordinate (Xd017) at (  -1.03111947     ,   0.00000000     );
 \coordinate (Xu017) at (  -1.03111947     ,   1.45000005     );
 \coordinate (Xd018) at (  -1.00657034     ,   0.00000000     );
 \coordinate (Xu018) at (  -1.00657034     ,   1.45000005     );
 \coordinate (Xd019) at ( -0.982927799     ,   0.00000000     );
 \coordinate (Xu019) at ( -0.982927799     ,   1.45000005     );
 \coordinate (Xd020) at ( -0.960138083     ,   0.00000000     );
 \coordinate (Xu020) at ( -0.960138083     ,   1.45000005     );
 \coordinate (Xd021) at ( -0.938151836     ,   0.00000000     );
 \coordinate (Xu021) at ( -0.938151836     ,   1.45000005     );
 \coordinate (Xd022) at ( -0.916923344     ,   0.00000000     );
 \coordinate (Xu022) at ( -0.916923344     ,   1.45000005     );
 \coordinate (Xd023) at ( -0.896410108     ,   0.00000000     );
 \coordinate (Xu023) at ( -0.896410108     ,   1.45000005     );
 \coordinate (Xd024) at ( -0.876572669     ,   0.00000000     );
 \coordinate (Xu024) at ( -0.876572669     ,   1.45000005     );
 \coordinate (Xd025) at ( -0.857374907     ,   0.00000000     );
 \coordinate (Xu025) at ( -0.857374907     ,   1.45000005     );
 \coordinate (Xd026) at ( -0.322138071     ,   0.00000000     );
 \coordinate (Xu026) at ( -0.322138071     ,   1.45000005     );
 \coordinate (Xd027) at (   4.08823341E-02 ,   0.00000000     );
 \coordinate (Xu027) at (   4.08823341E-02 ,   1.66750002     );
 \coordinate (Xd028) at (  0.344088137     ,   0.00000000     );
 \coordinate (Xu028) at (  0.344088137     ,   1.45000005     );
 \coordinate (Xd029) at (  0.933582962     ,   0.00000000     );
 \coordinate (Xu029) at (  0.933582962     ,   1.45000005     );
 \coordinate (Xd030) at (  0.949876368     ,   0.00000000     );
 \coordinate (Xu030) at (  0.949876368     ,   1.45000005     );
 \coordinate (Xd031) at (  0.966609538     ,   0.00000000     );
 \coordinate (Xu031) at (  0.966609538     ,   1.45000005     );
 \coordinate (Xd032) at (  0.983802259     ,   0.00000000     );
 \coordinate (Xu032) at (  0.983802259     ,   1.45000005     );
 \coordinate (Xd033) at (   1.00147581     ,   0.00000000     );
 \coordinate (Xu033) at (   1.00147581     ,   1.45000005     );
 \coordinate (Xd034) at (   1.01965249     ,   0.00000000     );
 \coordinate (Xu034) at (   1.01965249     ,   1.45000005     );
 \coordinate (Xd035) at (   1.03835630     ,   0.00000000     );
 \coordinate (Xu035) at (   1.03835630     ,   1.45000005     );
 \coordinate (Xd036) at (   1.05761254     ,   0.00000000     );
 \coordinate (Xu036) at (   1.05761254     ,   1.45000005     );
 \coordinate (Xd037) at (   1.07744837     ,   0.00000000     );
 \coordinate (Xu037) at (   1.07744837     ,   1.45000005     );
 \coordinate (Xd038) at (   1.09789252     ,   0.00000000     );
 \coordinate (Xu038) at (   1.09789252     ,   1.45000005     );
 \coordinate (Xd039) at (   1.11897564     ,   0.00000000     );
 \coordinate (Xu039) at (   1.11897564     ,   1.45000005     );
 \coordinate (Xd040) at (   1.14073050     ,   0.00000000     );
 \coordinate (Xu040) at (   1.14073050     ,   1.45000005     );
 \coordinate (Xd041) at (   1.16319227     ,   0.00000000     );
 \coordinate (Xu041) at (   1.16319227     ,   1.45000005     );
 \coordinate (Xd042) at (   1.18639874     ,   0.00000000     );
 \coordinate (Xu042) at (   1.18639874     ,   1.45000005     );
 \coordinate (Xd043) at (   1.21038997     ,   0.00000000     );
 \coordinate (Xu043) at (   1.21038997     ,   1.45000005     );
 \coordinate (Xd044) at (   1.23520935     ,   0.00000000     );
 \coordinate (Xu044) at (   1.23520935     ,   1.45000005     );
 \coordinate (Xd045) at (   1.26090300     ,   0.00000000     );
 \coordinate (Xu045) at (   1.26090300     ,   1.45000005     );
 \coordinate (Xd046) at (   1.28752148     ,   0.00000000     );
 \coordinate (Xu046) at (   1.28752148     ,   1.45000005     );
 \coordinate (Xd047) at (   1.31511819     ,   0.00000000     );
 \coordinate (Xu047) at (   1.31511819     ,   1.45000005     );
 \coordinate (Xd048) at (   1.34375131     ,   0.00000000     );
 \coordinate (Xu048) at (   1.34375131     ,   1.45000005     );
 \coordinate (Xd049) at (   1.37348390     ,   0.00000000     );
 \coordinate (Xu049) at (   1.37348390     ,   1.45000005     );
 \coordinate (Xd050) at (   1.40438342     ,   0.00000000     );
 \coordinate (Xu050) at (   1.40438342     ,   1.45000005     );
 \coordinate (Xd051) at (   1.43652391     ,   0.00000000     );
 \coordinate (Xu051) at (   1.43652391     ,   1.45000005     );
 \coordinate (Xd052) at (   1.46998525     ,   0.00000000     );
 \coordinate (Xu052) at (   1.46998525     ,   1.45000005     );
 \coordinate (Xd053) at (   1.50485456     ,   0.00000000     );
 \coordinate (Xu053) at (   1.50485456     ,   1.45000005     );
 \coordinate (Xw001) at (  -1.73006141     ,   0.00000000     );
 \coordinate (Xw002) at ( -0.589756489     ,   0.00000000     );
 \coordinate (Xw003) at ( -0.140627861     ,   0.00000000     );
 \coordinate (Xw004) at (  0.192485243     ,   0.00000000     );
 \coordinate (Xw005) at (  0.638835549     ,   0.00000000     );
 \coordinate (Xw006) at (   1.61019444     ,   0.00000000     );
 \draw[red,line width=0.8mm,dashed]  (Xd001) -- (Xu001) node[near end,sloped,above] {\pmb{$\xi = u_{\lf} - \coefa_{\lf}$}};
 \draw[blue,line width=0.2mm,dashed]  (Xd002) -- (Xu002);
 \draw[blue,line width=0.2mm,dashed]  (Xd003) -- (Xu003);
 \draw[blue,line width=0.2mm,dashed]  (Xd004) -- (Xu004);
 \draw[blue,line width=0.2mm,dashed]  (Xd005) -- (Xu005);
 \draw[blue,line width=0.2mm,dashed]  (Xd006) -- (Xu006);
 \draw[blue,line width=0.2mm,dashed]  (Xd007) -- (Xu007);
 \draw[blue,line width=0.2mm,dashed]  (Xd008) -- (Xu008);
 \draw[blue,line width=0.2mm,dashed]  (Xd009) -- (Xu009);
 \draw[blue,line width=0.2mm,dashed]  (Xd010) -- (Xu010);
 \draw[blue,line width=0.2mm,dashed]  (Xd011) -- (Xu011);
 \draw[magenta,line width=0.8mm]  (Xd012) -- (Xu012) node[near end,sloped,above] {};
 \draw (Xu012) node[anchor=south] {    $ \begin{array}{c} \\ \text{ Rarefaction } \\ \text{ or Shock }\\\pmb{\xi = S_{\lf}}\end{array}   $};
 \draw[blue,line width=0.2mm,dashed]  (Xd013) -- (Xu013);
 \draw[blue,line width=0.2mm,dashed]  (Xd014) -- (Xu014);
 \draw[blue,line width=0.2mm,dashed]  (Xd015) -- (Xu015);
 \draw[blue,line width=0.2mm,dashed]  (Xd016) -- (Xu016);
 \draw[blue,line width=0.2mm,dashed]  (Xd017) -- (Xu017);
 \draw[blue,line width=0.2mm,dashed]  (Xd018) -- (Xu018);
 \draw[blue,line width=0.2mm,dashed]  (Xd019) -- (Xu019);
 \draw[blue,line width=0.2mm,dashed]  (Xd020) -- (Xu020);
 \draw[blue,line width=0.2mm,dashed]  (Xd021) -- (Xu021);
 \draw[blue,line width=0.2mm,dashed]  (Xd022) -- (Xu022);
 \draw[blue,line width=0.2mm,dashed]  (Xd023) -- (Xu023);
 \draw[blue,line width=0.2mm,dashed]  (Xd024) -- (Xu024);
 \draw[red,line width=0.8mm, dashed]  (Xd025) -- (Xu025) node[near end,sloped,below] {\pmb{$\xi = u_{*} - \coefa_{*\lf}$}};
 \draw[blue,line width=0.8mm]  (Xd026) -- (Xu026) node[near end,sloped,above] {};
 \draw (Xu026) node[anchor=south] {   $ \begin{array}{c} \\ \text{ Shear }\\  \pmb{\xi = u_{*} - \coefb_{*\lf}}\end{array} $};
 \draw[line width=0.8mm]  (Xd027) -- (Xu027) node[near end,sloped,above] {};
 \draw (Xu027) node[anchor=south] {$     \begin{array}{c} \text{Contact}\\\pmb{\xi = u_{*}} \end{array}$} ;
 \draw[blue,line width=0.8mm]  (Xd028) -- (Xu028) node[near end,sloped,below] {};
 \draw (Xu028) node[anchor=south] {  $ \begin{array}{c} \\ \text{ Shear }\\  \pmb{\xi = u_{*} +\coefb_{*\rg}}\end{array} $};
 \draw[red,line width=0.8mm,dashed]  (Xd029) -- (Xu029) node[near end,sloped,above] {\pmb{$\xi = u_{*} + \coefa_{*\rg}$}};
 \draw[blue,line width=0.2mm,dashed]  (Xd030) -- (Xu030);
 \draw[blue,line width=0.2mm,dashed]  (Xd031) -- (Xu031);
 \draw[blue,line width=0.2mm,dashed]  (Xd032) -- (Xu032);
 \draw[blue,line width=0.2mm,dashed]  (Xd033) -- (Xu033);
 \draw[blue,line width=0.2mm,dashed]  (Xd034) -- (Xu034);
 \draw[blue,line width=0.2mm,dashed]  (Xd035) -- (Xu035);
 \draw[blue,line width=0.2mm,dashed]  (Xd036) -- (Xu036);
 \draw[blue,line width=0.2mm,dashed]  (Xd037) -- (Xu037);
 \draw[blue,line width=0.2mm,dashed]  (Xd038) -- (Xu038);
 \draw[blue,line width=0.2mm,dashed]  (Xd039) -- (Xu039);
 \draw[magenta,line width=0.8mm]  (Xd040) -- (Xu040) node[near end,sloped,below] {};
 \draw (Xu040) node[anchor=south] {    $ \begin{array}{c}  \text{ Rarefaction } \\ \text{ or Shock }\\ \pmb{\xi = S_{\rg}} \end{array}   $};
 \draw[blue,line width=0.2mm,dashed]  (Xd041) -- (Xu041);
 \draw[blue,line width=0.2mm,dashed]  (Xd042) -- (Xu042);
 \draw[blue,line width=0.2mm,dashed]  (Xd043) -- (Xu043);
 \draw[blue,line width=0.2mm,dashed]  (Xd044) -- (Xu044);
 \draw[blue,line width=0.2mm,dashed]  (Xd045) -- (Xu045);
 \draw[blue,line width=0.2mm,dashed]  (Xd046) -- (Xu046);
 \draw[blue,line width=0.2mm,dashed]  (Xd047) -- (Xu047);
 \draw[blue,line width=0.2mm,dashed]  (Xd048) -- (Xu048);
 \draw[blue,line width=0.2mm,dashed]  (Xd049) -- (Xu049);
 \draw[blue,line width=0.2mm,dashed]  (Xd050) -- (Xu050);
 \draw[blue,line width=0.2mm,dashed]  (Xd051) -- (Xu051);
 \draw[blue,line width=0.2mm,dashed]  (Xd052) -- (Xu052);
 \draw[red,line width=0.8mm, dashed]  (Xd053) -- (Xu053) node[near end,sloped,below] {\pmb{$\xi = u_{\rg} + \coefa_{\rg}$}};
 \draw (Xw001) node[anchor=south east] {$\Large   \begin{array}{c} h_{\lf}\\[2mm] u_{\lf}\\[2mm] v_{\lf}\\[2mm] \Pres_{\lf}\\[2mm]   \R_{12}^{\lf} \\[2mm] \R_{22}^{\lf}  \end{array} $};
 \draw (Xw002) node[anchor=south west] {$\!\!\!\!\!\!\Large   \begin{array}{c} \\ h_{*\lf}\\[2mm] u_{*}\\[2mm] v_{*\lf}\\[2mm] \Pres_{*}\\[2mm]   \R_{12}^{*\lf} \\[2mm] \R_{22}^{*\lf}  \end{array} $\!};
 \draw (Xw003) node[anchor=south] {$\Large   \begin{array}{c} h_{*\lf}\\[2mm] u_{*}\\[2mm] v_{**}\\[2mm] \Pres_{*}\\[2mm]   \R_{12}^{**} \\[2mm] \R_{22}^{**\lf}  \end{array} $};
 \draw (Xw004) node[anchor=south] {$\Large   \begin{array}{c} h_{*\rg}\\[2mm] u_{*}\\[2mm] v_{**}\\[2mm] \Pres_{*}\\[2mm]   \R_{12}^{**} \\[2mm] \R_{22}^{**\rg}  \end{array} $};
 \draw (Xw005) node[anchor=south east] {$\Large   \begin{array}{c} h_{*\rg}\\[2mm] u_{*}\\[2mm] v_{*\rg}\\[2mm] \Pres_{*}\\[2mm]   \R_{12}^{*\rg} \\[2mm] \R_{22}^{*\rg}  \end{array} \!\!\!\!\!\!$};
 \draw (Xw006) node[anchor=south west] {$\Large   \begin{array}{c} h_{\rg}\\[2mm] u_{\rg}\\[2mm] v_{\rg}\\[2mm] \Pres_{\rg}\\[2mm]   \R_{12}^{\rg} \\[2mm] \R_{22}^{\rg}  \end{array} $};
   \end{tikzpicture}
   \caption{Shear Shallow Water (SSW) model: Wave structure of the 1-D Riemann problem. Usefull set  of variables in the intermediate states for the computation of  the analytical solution}
\label{SSW_RPm_Waves}
\end{center}
\end{figure}
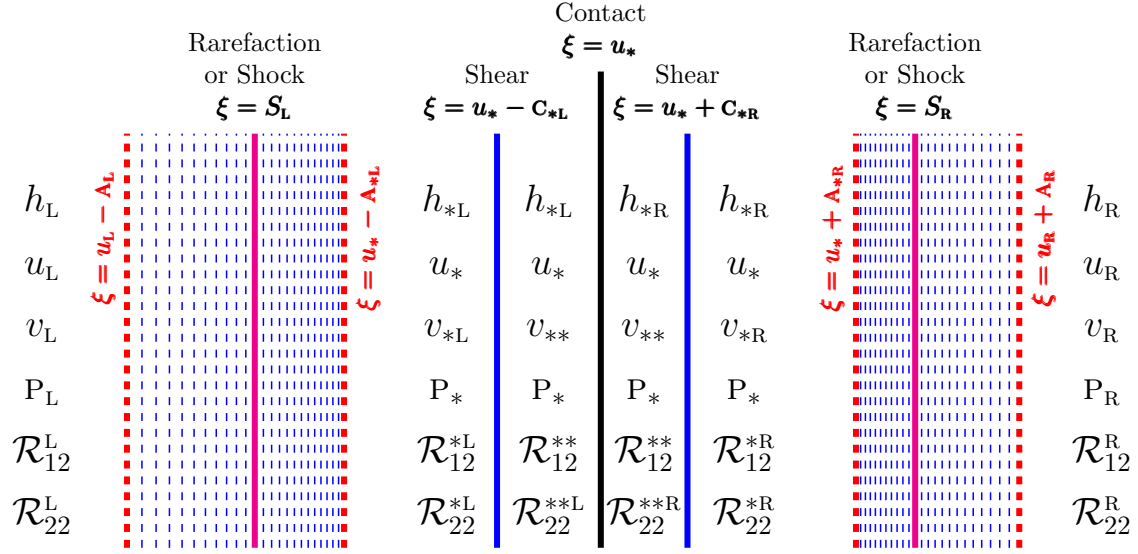

%-------------------------------------------------------------------------------
The solution is obtained by using the constancy of total pressure ($\Pres_*$) and normal velocity ($u_*$) inside the Riemann fan. If the 1-wave is a rarefaction, then $h_{*\lf} \le h_{\lf}$ while if it is a shock, then $h_{*\lf} \in (h_{\lf}, 2h_{\lf})$. Similarly, if the 6-wave is a rarefaction then $h_{*\rg} < h_{\rg}$, while if it is a shock, then $h_{*\rg} \in (h_{\rg}, 2h_{\rg})$. The total pressure in the first intermediate state can be written as
\begin{equation}
\Pres_{*\lf} =  % \R_{11}^{*\lf} + \half g h_{*\lf}^2 =
\left \{
\begin{array}{lr}
\left( \frac{h_{*\lf}}{{h_{\lf}}}\right)^3 \R_{11}^{\lf} + \half g h_{*\lf}^2 &\text{ for a 1-rarefaction : }  h_{*\lf} \le h_{\lf} \\[3mm]
\frac{1}{2 h_{\lf} - h_{*\lf}} \left[ (2 h_{*\lf} - h_{\lf}) \R_{11}^{\lf} - \frac{g (h_{\lf} - h_{*\lf})^3}{2} \right]+ \half g h_{*\lf}^2&
\text{for a 1-shock : }  h_{*\lf} > h_{\lf}
\end{array} \right .
\label{eq:totpsl}
\end{equation}
The velocity is given by
\[
u_{*\lf} = \left \{
\begin{array}{lr}
u_{\lf}  -  \left[ \rule{0mm}{4mm} \Afun(h_{*\lf},c_{\lf}) -\Afun(h_{\lf}, c_{\lf}) \right] & \text{ for a 1-rarefaction : }  h_{*\lf} \le h_{\lf} \\[3mm]
u_{\lf} - \sqrt{ \frac{(h_{*\lf}-h_{\lf})(P_{*\lf} - P_{\lf})}{h_{*\lf} h_{\lf}}}& \text{ for a 1-shock : }  h_{*\lf} > h_{\lf}
\end{array} \right .
\]
For the 6-wave, and given right state, we have
\[
\Pres_{*\rg} =  \left \{
\begin{array}{lr}
\left( \frac{h_{*\lf}}{{h_{\rg}}}\right)^3 \R_{11}^{\rg} + \half g h_{*\rg}^2 &\text{ for a 6-rarefaction : }  h_{*\rg} \le h_{\rg} \\[3mm]
\frac{1}{2 h_{\rg} - h_{*\rg}} \left[ (2 h_{*\rg} - h_{\rg}) \R_{11}^{\rg} - \frac{g (h_{\rg} - h_{*\rg})^3}{2} \right] + \half g h_{*\rg}^2&
\text{for a  6-shock : }  h_{*\rg} > h_{\rg}
\end{array} \right .
\]
and
\[
u_{*\rg} = \left \{
\begin{array}{lr}
u_{\rg}  +\left[ \rule{0mm}{4mm}  \Afun(h_{*\rg},c_{\rg})  - \Afun(h_{\rg}, c_{\rg})  \right] & \text{ for a 6-rarefaction : }  h_{*\rg} \le h_{\rg} \\[3mm]
u_{\rg} + \sqrt{ \frac{(h_{*\rg}-h_{\rg})(P_{*\rg} - P_{\rg})}{h_{*\rg} h_{\rg}}}&
 \text{for a  6-shock : }  h_{*\rg} > h_{\rg}
\end{array} \right .\]
%where the rarefaction case is obtained from a Riemann invariant and the shock case is given by~\eqref{eq:shockouur}.
We now want to determine $z_{\lf} = \frac{h_{*\lf}}{h_{\lf}}$ and $z_{\rg} = \frac{h_{*\rg}}{h_{\rg}}$ such that the total pressure and the velocity obtained from the 1-wave matches with those obtained from the 6-wave:
\[
\Pres_{*\lf}  - \Pres_{*\rg}  = 0 \qquad \text{ and } \qquad u_{*\lf} - u_{*\rg} = 0
\]
We define the functions $f(z; h, \R_{11}) $ for the total pressure and $g_\pm(z; h, u, \R_{11}) $ for the velocity as
\[
f(z; h, \R_{11}) = \begin{cases}
z^3\R_{11} + \half g z^2 h^2 & 0 < z \le 1 \\[10pt]
\frac{2 z - 1}{2 - z} \R_{11} + \half g h^2 \frac{(z - 1)^3}{2 - z} + \half g z^2 h^2 & 1 < z < 2
\end{cases}
\]
and
\[
g_\pm(z; h, u, \R_{11}) = \begin{cases}
u \pm [\Afun(z h, c) - \Afun(h,c)] & 0 < z \le 1 \\[10pt]
u  \pm \sqrt{ \frac{(z-1)[f(z;h,\R_{11}) - \R_{11} - \half g h^2 ]}{z h} } & 1 < z < 2
\end{cases}
\]
where $c = \R_{11}/h^3$. The problem can now be stated as:
\begin{equation}
  \text{ find $\quad z_{\lf}, z_{\rg} \in (0,2)\quad $ such that} \qquad
  \left \{
  \begin{array}{rcl}
    F(z_{\lf},z_{\rg}) &=& 0\\
    G(z_{\lf},z_{\rg}) &=& 0
  \end{array}
  \right .
  \label{eq:HugoniotRP}
\end{equation}
where
\begin{equation}\label{eq:FG}
\begin{aligned}
F(z_1,z_2) ~=~& f(z_1; h_{\lf}, \R_{11}^{\lf}) - f(z_2; h_{\rg}, \R_{11}^{\rg})  \\
G(z_1,z_2) ~=~& g_-(z_1; h_{\lf}, u_{\lf}, \R_{11}^{\lf}) - g_+(z_2; h_{\rg}, u_{\rg}, \R_{11}^{\rg})
\end{aligned}
\end{equation}
If the solution is such that $z_{\lf} \in (0,1)$ then the 1-wave is a rarefaction, and otherwise if $z_{\lf} \in (1,2)$, then it is a 1-shock.
 Similar interpretation applies to the 6-wave. The roots can be obtained by a Newton method as described in Appendix \ref{sec:root}.

We can numerically investigate the above functions $F,G$ by plotting contours of their level sets. For a given Riemann data of dam break problem from Section~\ref{sec:dambreak}, we plot contours of $F,G$ and also plot their zero contour lines. The solution is at the intersection of the zero contour lines of the two functions. In the Figure~\ref{fig:dambreak}, the bold solid lines are the zero level curves of $F,G$ and we see that they intersect at a unique point, which is approximately
\[
z_{\lf} = 0.731428410320821, \qquad z_{\rg} = 1.4177231168358784
\]
Hence the 1-wave is a 1-rarefaction and the 6-wave is a 6-shock.  We observe that the level curves of $F,G$ have a monotonic behaviour which implies that they intersect at a unique point and we now prove this behaviour in the general case.

\begin{figure}
\begin{center}
\includegraphics[width=0.60\textwidth]{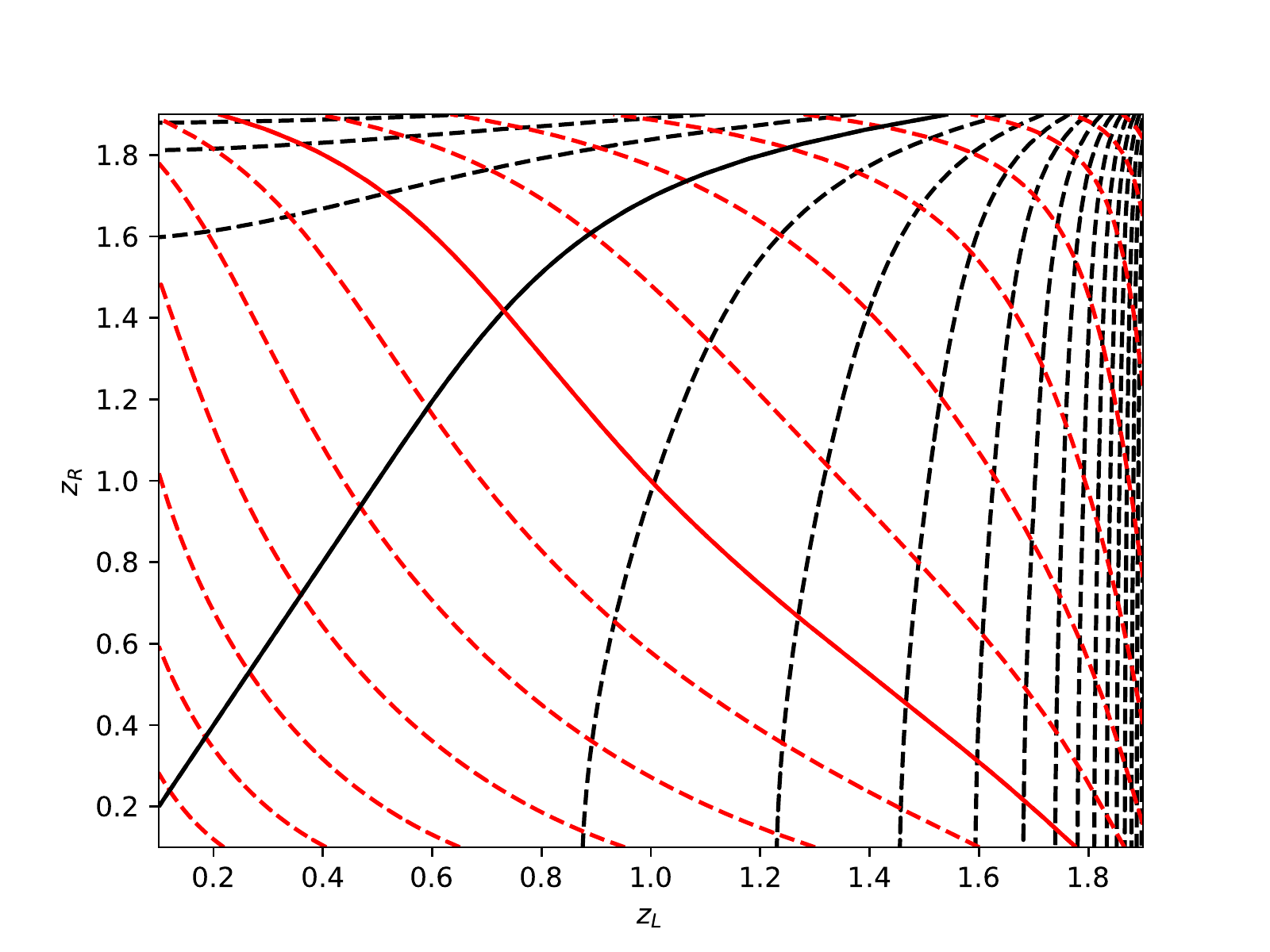}
\caption{Contours of $F$ (black) and $G$ (red) for dam break
  problem. Solid lines are  where the functions are zero. Intersection of the solid lines gives the desired $z_{\lf}$ and $z_{\rg}$.}
\label{fig:dambreak}
\end{center}
\end{figure}
%-------------------------------------------------------------------------------
\Old{ \clr{green}{
\paragraph{Delete this}
\[
g_\pm(z; h, u, \R_{11}) = \begin{cases}
u \pm \int_{h}^{zh} \frac{1}{\xi} \sqrt{ g \xi + 3 \xi^2 \R_{11} / h^3} \ud \xi & 0 < z \le 1 \\[10pt]
u  \pm \sqrt{ \frac{(z-1)[f(z;h,\R_{11}) - \R_{11} - \half g h^2 ]}{z h} } & 1 < z < 2
\end{cases}
\]
The minus sign corresponds to 1-rarefaction and the plus sign to 6-rarefaction. We can write the integrals by a change of variable as
\[
g_\pm(z; h, u, \R_{11}) = \begin{cases}
u \pm \int_{1}^{z} \frac{1}{\xi} \sqrt{ g h \xi + 3 \xi^2 \R_{11} / h} \ud \xi & 0 < z \le 1 \\[10pt]
u  \pm \sqrt{ \frac{(z-1)[f(z;h,\R_{11}) - \R_{11} - \half g h^2 ]}{z h} } & 1 < z < 2
\end{cases}
\]
}
}
%-------------------------------------------------------------------------------
\begin{theorem}
Assume that the two states in the Riemann problem are positive
($h_\lf, h_\rg > 0$). If
\begin{equation}
u_{\rg} - u_{\lf} < \Afun(h_{\lf},c_{\lf}) + \Afun(h_{\rg},c_{\rg})
\label{eq:riemcond}
\end{equation}
where $\Afun(h,c)$ is given by~\eqref{eq:Afun}, then  there exists a unique solution $(z_{\lf},z_{\rg}) \in (0,2) \times (0,2)$ such that
\begin{equation}\label{eq:FGeq}
F(z_{\lf},z_{\rg}) = 0\qquad \text{ and } \qquad G(z_{\lf},z_{\rg}) = 0
\end{equation}
where $F,G$ are given by~\eqref{eq:FG}. In this context, the Riemann
problem has a unique, positive solution.
%\BNK{ 
%\begin{equation}
%\text{If } \qquad u_{\rg} - u_{\lf} \geq \Afun(h_{\lf},c_{\lf}) + \Afun(h_{\rg},c_{\rg})
%\label{eq:riemcond}
%\end{equation}
%the solution of the Riemann problem connect the left state to a vacuum
%by a 1-rarefaction and the right state to a vacuum by a
%6-rarefaction. Moreover, when $u_{\rg} - u_{\lf} =
%\Afun(h_{\lf},c_{\lf}) + \Afun(h_{\rg},c_{\rg})$  the vacuum state
%reduces to a single point}{} 
\end{theorem}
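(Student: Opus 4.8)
The plan is to decouple the two equations by exploiting the fact that $F=0$ merely expresses that the two outer waves produce a common total pressure. First I would establish that, for fixed left data, the map $z \mapsto f(z; h_\lf, \R_{11}^\lf)$ is a continuous, strictly increasing bijection from $(0,2)$ onto $(0,\infty)$. On the rarefaction branch $z \in (0,1]$ this is immediate, since $\partial_z f = 3 z^2 \R_{11} + g z h^2 > 0$ and $f \to 0$ as $z \to 0^+$; on the shock branch $z \in (1,2)$ the monotonicity follows from the already-established fact that $\R_{11}(\tau;\tau_\lf,\R_{11}^\lf)$ is decreasing on $(\half\tau_\lf, \tau_\lf)$ (so $\R_{11}^{*}$ increases with $z$) together with the explicit $\half g z^2 h^2$ term, and one checks $f \to +\infty$ as $z \to 2^-$. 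The two branches match continuously at $z=1$, where $f = \R_{11} + \half g h^2$. The same statements hold for the right data.

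Because $f$ is a bijection onto $(0,\infty)$, the equation $F=0$ is equivalent to prescribing a common value $\Pres_* = f(z_\lf;\cdot) = f(z_\rg;\cdot) \in (0,\infty)$; inverting $f$ then yields strictly increasing, continuous functions $z_\lf = z_\lf(\Pres_*)$ and $z_\rg = z_\rg(\Pres_*)$ that trace out the zero level set of $F$. Next I would record the monotonicity of the velocity functions: $g_-(\cdot; h_\lf, u_\lf, \R_{11}^\lf)$ is strictly decreasing in $z$ and $g_+(\cdot; h_\rg, u_\rg, \R_{11}^\rg)$ is strictly increasing in $z$. On the rarefaction branch this follows from $\Afun(zh,c)$ being increasing in $z$ (its integrand $\frac{1}{h}\sqrt{gh+3ch^2}$ is positive), and on the shock branch from the square-root term being an increasing function of $z$ through its dependence on $f$; once more the branches match continuously at $z=1$, where $g_\pm = u$.

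I would then substitute the parametrization into $G$ to define the single-variable function $\phi(\Pres_*) = g_-(z_\lf(\Pres_*);\cdot) - g_+(z_\rg(\Pres_*);\cdot)$ on $(0,\infty)$. Since $z_\lf, z_\rg$ increase with $\Pres_*$ while $g_-$ decreases and $g_+$ increases in their arguments, $\phi$ is continuous and strictly decreasing. The endpoint behaviour is the crux: as $\Pres_* \to 0^+$ we have $z_\lf, z_\rg \to 0^+$ and, using $\Afun(0,c)=0$,
\[
\phi(0^+) = \Afun(h_\lf,c_\lf) + \Afun(h_\rg,c_\rg) - (u_\rg - u_\lf),
\]
which is strictly positive precisely under hypothesis~\eqref{eq:riemcond}; as $\Pres_* \to \infty$ we have $z_\lf, z_\rg \to 2^-$, the shock square-root terms blow up, and $\phi \to -\infty$. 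By the intermediate value theorem together with strict monotonicity, $\phi$ has a unique zero $\Pres_*^{\star}$, and $(z_\lf(\Pres_*^{\star}), z_\rg(\Pres_*^{\star}))$ is the unique solution of~\eqref{eq:FGeq} in $(0,2)\times(0,2)$. Positivity of all the resulting states follows since $z_\lf, z_\rg > 0$ keep $h_{*\lf}, h_{*\rg} > 0$, and the remaining variables are recovered through the Riemann invariants and jump relations derived earlier.

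The main obstacle I anticipate is the rigorous verification of strict monotonicity of $f$ and $g_\pm$ on the shock branch $(1,2)$ and their $C^0$ matching with the rarefaction branch at $z=1$; in particular, tying the growth of $f$ and of the velocity square-root term back to the sign results already obtained for $\R_{11}(\tau;\cdot)$ requires care, as does confirming $f\to\infty$ as $z\to 2^-$ so that the parametrization $\Pres_*\mapsto(z_\lf,z_\rg)$ sweeps the entire admissible range. Once these monotonicity and limit facts are secured, the reduction to the scalar function $\phi$ and the intermediate value argument are routine.
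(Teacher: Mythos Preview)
Your proposal is correct and arrives at the same conclusion through a slightly different but equivalent route. The paper parametrises the zero set of $F$ by $z_\lf$ via the implicit function theorem, obtaining an increasing curve $\hat z_\rg(z_\lf)$ from $(0,0)$ to $(2,2)$ (which requires a small L'H\^opital argument to pin down the behaviour at the origin), does the same for $G$ to obtain a decreasing curve $\tilde z_\rg(z_\lf)$, and then argues that an increasing and a decreasing curve with the stated endpoints must meet once. You instead parametrise the $F=0$ set by the common total pressure $\Pres_*$, inverting the bijection $f:(0,2)\to(0,\infty)$ on each side, and collapse the problem to a single scalar function $\phi(\Pres_*)$ on which the intermediate value theorem acts directly. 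Your parametrisation is more physical and sidesteps the implicit function theorem and the derivative computation at the origin; the paper's two-curve picture is more geometric and makes the structure in Figure~\ref{fig:dambreak} transparent. The only point where your sketch is light is the strict monotonicity of $g_\pm$ on the shock branch: the dependence on $z$ is not solely through $f$, and the paper handles this by computing $\partial g_-/\partial z$ explicitly and checking the sign of $z^3-3z+6$ on $[1,2]$; you flag this as the main obstacle, and indeed that explicit check (or an equivalent one) is what is needed to close the argument.
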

%-------------------------------------------------------------------------------
\begin{proof} We want to show that the set of equations~\eqref{eq:FGeq} has a unique solution $(z_{\lf},z_{\rg}) \in (0,2) \times (0,2)$. Now
\[
\df{F}{z_{\lf}} = \df{}{z_{\lf}}f(z_{\lf};h_{\lf},\R_{11}^{\lf}) = \begin{cases}
3 z_{\lf}^2 \R_{11}^{\lf} + g z_{\lf} h_{\lf}^2, & 0 < z_{\lf} \le 1 \\
\frac{3}{(2 - z_{\lf})^2} \R_{11}^{\lf} + \half g h_{\lf}^2 \frac{z_{\lf}^2 - 4 z_{\lf} +5}{(2 - z_{\lf})^2}, & 1 \le z_{\lf} < 2
\end{cases}
\]
Hence $f(z; h, \R_{11})$ is an increasing function of $z \in (0,2)$ with $f(0; h, \R_{11})=0$ and $f(2; h, \R_{11}) = \infty$. Thus given any $z_\lf \in (0,2)$, the  equation $F(z_{\lf},z_{\rg}) = 0$ has a  unique solution $z_\rg \in (0,2)$ Now, since
\[
\df{F}{z_{\lf}} > 0, \qquad 
\df{F}{z_{\rg}} < 0, \qquad z_\lf, z_{\rg}\in (0,2)
\]
then by implicit function theorem, we have a  continuously differentiable function $z_{\rg} = \hz_{\rg}(z_{\lf})$, $z_{\lf} \in (0,2)$ such that $F(z_{\lf}, \hz_{\rg}(z_{\lf}))=0$. Moreover $F(0,0)=0$ so that $\hz_{\rg}(0)=0$. Now
\[
\dd{\hz_{\rg}}{z_{\lf}} = - \frac{\df{F}{z_{\lf}}}{\df{F}{z_{\rg}}} > 0, \qquad z_{\lf} \in (0,2)
\]
so that $\hz_{\rg}(z_{\lf})$ is an increasing function. Now
\[
\dd{\hz_{\rg}}{z_{\lf}}(0) = \lim_{z_{\lf} \to 0}\frac{3 z_{\lf}^2 \R_{11}^{\lf} + g z_{\lf} h_{\lf}^2}{3 \hz_{\rg}(z_{\lf})^2 \R_{11}^{\rg} + g \hz_{\rg}(z_{\lf}) h_{\rg}^2}
\]
which is of $0/0$ form. Applying L'Hopital rule, we get
\[
\dd{\hz_{\rg}}{z_{\lf}}(0) = \lim_{z_{\lf} \to 0}\frac{6 z_{\lf} \R_{11}^{\lf} + g h_{\lf}^2}{6 \hz_{\rg}(z_{\lf}) \dd{\hz_{\rg}}{z_{\lf}}(z_{\lf}) \R_{11}^{\rg} + g \dd{\hz_{\rg}}{z_{\lf}}(z_{\lf}) h_{\rg}^2} = \frac{h_{\lf}^2}{\dd{\hz_{\rg}}{z_{\lf}}(0) h_{\rg}^2} \quad\implies\quad \dd{\hz_{\rg}}{z_{\lf}}(0)  = \frac{h_{\lf}}{h_{\rg}} > 0
\]
As $z_{\lf} \to 2$, the first term of $F$ in~\eqref{eq:FG} which depends on $z_{\lf}$ goes to $\infty$ and this requires that $z_{\rg} \to 2$ also, i.e., $\hz_{\rg}(z_{\lf}) \to 2$. Moreover, $\hz_{\rg}(z_{\lf}) \ne 2$ for $z_{\lf} \in (0,2)$ since the second term in $F$ goes to $\infty$ as $z_{\rg} \to 2$. Hence the curve $(z_{\lf}, \hz_{\rg}(z_{\lf}))$ starts at $(0,0)$ and approaches $(2,2)$ in a monotonic way.

Now consider the function $G$ for which
\[
\df{G}{z_{\lf}} = \df{}{z_{\lf}}g_-(z_{\lf};h_{\lf},u_{\lf},\R_{11}^{\lf}) = \begin{cases}
- \frac{1}{z_{\lf}} \sqrt{g h_{\lf} z_{\lf} + 3 z_{\lf}^2 \R_{11}^{\lf}/h_{\lf}} & 0 < z_{\lf} \le 1\\[10pt]
- \frac{6 \R_{11}^{\lf} + \half g h_{\lf}^2 (z_{\lf}^3 - 3 z_{\lf} + 6)}{2[z_{\lf} (2 - z_{\lf})]^{3/2} h_{\lf}^{1/2} [3 \R_{11}^{\lf} + \half g h_{\lf}^2 (3-z_{\lf})]^{1/2} } & 1 \le z_{\lf} < 2
\end{cases}
\]
with a similar expression for $\df{G}{z_\rg}$. Hence
\[
\df{G}{z_{\lf}} < 0, \qquad \df{G}{z_{\rg}} < 0, \qquad z_\lf, z_{\rg} \in (0,2)
\]
Thus $g_-(z;h,u,\R_{11})$ is a decreasing function and $g_+(z;h,u,\R_{11})$ is an increasing function in $(0,2)$, see Figure~\ref{fig:rpsol}, and moreover
\[
g_-(0;h_\lf,u_\lf,\R_{11}^\lf) = u_\lf + \Afun(h_\lf,c_\lf), \qquad 
g_-(2;h_\lf,u_\lf,\R_{11}^\lf) = -\infty
\]
\[
g_+(0;h_\rg,u_\rg,\R_{11}^\rg) = u_\rg - \Afun(h_\rg,c_\rg), \qquad 
g_+(2;h_\rg,u_\rg,\R_{11}^\rg) = +\infty
\]
\begin{figure}
\begin{center}
\includegraphics[width=0.5\textwidth]{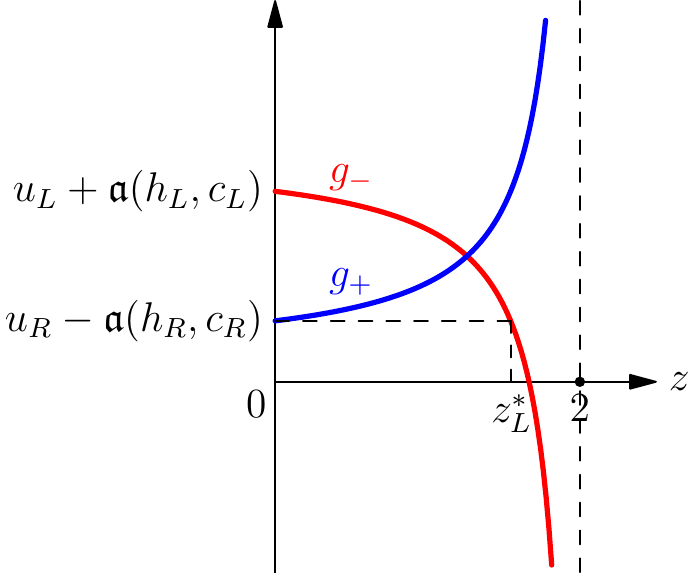}
\caption{Illustration of the functions $g_\pm$ under the condition~\eqref{eq:riemcond}}
\label{fig:rpsol}
\end{center}
\end{figure}
Under the assumption~\eqref{eq:riemcond}, we have $g_-(0;h_\lf,u_\lf,\R_{11}^\lf) > g_+(0;h_\rg,u_\rg,\R_{11}^\rg)$, and the equation $G(z_\lf, z_\rg)=0$ has a unique solution $z_\rg = \tilde z_\rg(z_\lf) \in [0,2)$ for all $z_\lf \in [0,z_\lf^*]$ with $\tilde z_\rg(z_\lf^*) = 0$ where $z_\lf^*$ satisfies $g_-(z_\lf^*; h_\lf, u_\lf, \R_{11}^\lf) = g_+(0;h_\rg,u_\rg,\R_{11}^\rg) = u_\rg - \Afun(h_\rg, c_\rg)$. By implicit function theorem, there is a  continuously differentiable function $z_{\rg} = \tz_{\rg}(z_{\lf})$, $z_{\lf} \in [0,z_\lf^*]$ such that $G(z_{\lf}, \tz_{\rg}(z_{\lf}))=0$. Now
\[
\dd{\tz_{\rg}}{z_{\lf}} = - \frac{\df{G}{z_{\lf}}}{\df{G}{z_{\rg}}} < 0, \qquad z_{\lf} \in [0,z_\lf^*]
\]
so that $\tilde z_{\rg}(z_{\lf})$ is a decreasing function for $z_{\lf} \in [0,z_\lf^*]$. 

We have shown that $\hat z_\rg : [0,2) \to [0,2)$ is increasing function with $\hat z_\rg(0) = 0$, $\lim_{z_\lf \to 2} \hat z_\rg(z_\lf) = 2$, and $\tilde z_\rg : [0,z_\lf^*] \to [0,2)$ is decreasing function with $\tilde z_\rg(0) \in (0,2)$, $\tilde z_\rg(z_\lf^*) = 0$, so they intersect at a unique point in $z_\lf \in (0,2)$ which is the desired solution.
\end{proof}

\subsection{Shock speed and jump conditions}
\label{sec:shockjump}
Suppose that the 1-wave is a 1-shock; then $h_{*\lf} = z_{\lf} h_{\lf}$ and using~\eqref{eq:r11shock}
\begin{equation}
\R_{11}^{*\lf} = \R_{11}(z_{\lf} h_{\lf}; h_{\lf}, \R_{11}^{\lf}) = \frac{2 z_{\lf} - 1}{2 - z_{\lf}} \R_{11}^{\lf} + \half g h_{\lf}^2 \frac{(z_{\lf} - 1)^3}{2 - z_{\lf}}
\label{eq:r11sl}
\end{equation}
while~\eqref{eq:shockulur} yields
\begin{equation}
u_* = u_{*\lf} = u_{\lf} - \sqrt{ \frac{(h_{*\lf}-h_{\lf})(\Pres_{*\lf} - \Pres_{\lf})}{h_{*\lf} h_{\lf}}}
\label{eq:ustar}
\end{equation}
The 1-shock speed can be  computed from the jump condition \eqref{eq:rh1}
\begin{eqnarray}
%\nonumber
S_{\lf} &=& \frac{\jump{h u}}{\jump{h}} = \avg{u} + \avg{h} \frac{\jump{u}}{\jump{h}}
% \\\nonumber
%&=& \half(u_{\lf} + u_*) + \half (h_{\lf} + h_{*\lf}) \frac{u_* - u_{\lf}}{h_{*\lf} -
%h_{\lf}}
%\\
\label{eq:shockspeed}
%&=&
= \frac{u_{\lf} + u_*}{2} + \left ( \frac{z_{\lf} + 1}{z_{\lf} - 1}\right )\frac{u_* - u_{\lf}}{2}
\end{eqnarray}
The jump conditions \eqref{eq:rh1}, \eqref{eq:rh2}, \eqref{eq:rh4}
have already been satisfied since they were used to determine the
Hugoniot curve. We can find $v_{*\lf}, \R_{12}^{*\lf}$ from
\eqref{eq:rh3}, \eqref{eq:rh5} which is a linear system of equations
\begin{equation}
  \left \{
\begin{array}{rcrcl}
h_{*\lf} (u_* - S_{\lf}) v_{*\lf} &+& \R_{12}^{*\lf} &=& a_1 \\[3mm]
\left( \half \R_{11}^{*\lf} + \half h_{*\lf} u_* (u_* - S_{\lf}) + \frac{1}{4} g h_{*\lf} (h_{*\lf} - h_{\lf})\right) v_{*\lf} &+& \left(u_* - \half S_{\lf}\right) \R_{12}^{*\lf} &=& a_2
\end{array}
\right .
\end{equation}
where
\begin{eqnarray*}
a_1 &=& h_{\lf} (u_{\lf} - S_{\lf})v_{\lf} + \R_{12}^{\lf}  \\
a_2 &=& (u_{\lf} - S_{\lf}) \E_{12}^{\lf} + \half (\R_{11}^{\lf} v_{\lf} + \R_{12}^{\lf} u_{\lf}) - \frac{1}{4} g h_{\lf} v_{\lf} (h_{*\lf} - h_{\lf})
\end{eqnarray*}
The determinant of the $2 \times 2$ matrix is
\[
Det = - \half \R_{11}^{*\lf} + \half h_{*\lf} (u_* - S_{\lf})^2 - \frac{1}{4} g h_{*\lf} (h_{*\lf} - h_{\lf})
\]
But using \eqref{eq:shockspeed} and \eqref{eq:ustar}
\[
u_* - S_{\lf} = - \frac{u_* - u_{\lf}}{z_{\lf}-1}  = \sqrt{\frac{(\Pres_{*\lf} - \Pres_{\lf})}{(z_{\lf} - 1) h_{*\lf}}}
\]
and hence,  using \eqref{eq:r11sl}, we get
\begin{eqnarray*}
  Det
  %&=&- \half \R_{11}^{*\lf} + \frac{(\Pres_{*\lf} - \Pres_{\lf})}{2(z_{\lf} - 1)}  - \frac{1}{4} g h_{*\lf} (h_{*\lf} - h_{\lf}) \\
  &=&
  \frac{2 - z_{\lf}}{2(z_{\lf}-1)} \R_{11}^{*\lf} - \frac{\R_{11}^{\lf} }{2(z_{\lf} - 1)} + \frac{1}{4} g h_{\lf}^2 (1 + 2 z_{\lf} - z_{\lf}^2)
%\\=&
=  R_{11}^{\lf} + \half g h_{\lf}^2%, \qquad \textrm{after}  \\ &=&
= \Pres_{\lf}
%\\&>&
 > 0
\end{eqnarray*}
and hence the $2 \times 2$ system has a unique solution. Once $v_{*\lf}, \R_{12}^{*\lf}$ have been determined, we can compute $\R_{22}^{*\lf}$ from \eqref{eq:rh6}
\[
\E_{22}^{*\lf} = \frac{1}{u_* - S_{\lf}}\left[ (u_{\lf} - S_{\lf}) \E_{22}^{\lf} - (\R_{12}^{*\lf} v_{*\lf} - \R_{12}^{\lf} v_{\lf})\right], \qquad \R_{22}^{*\lf} = 2 \E_{22}^{*\lf} - h_{*\lf} v_{*\lf}^2
\]
We have thus satisfied all the jump conditions and completely determined the $\con_{*\lf}$ state. The jump conditions for a 6-shock can be satisfied in a similar way to determine the $\con_{*\rg}$ state.
\paragraph{Shock speed.}
If the 1-wave is a shock, then $1 < z_L < 2$ and from~\eqref{eq:totpsl}
\begin{equation}
\Pres_{*\lf} - \Pres_{\lf} = \frac{z_{\lf} - 1}{2 - z_{\lf}} \left[ 3 \R_{11}^{\lf} + \half g h_{\lf}^2 (3 - z_{\lf})\right]
\label{eq:dtotpsl}
\end{equation}
and the shock speed is given by
\begin{eqnarray}
\nonumber
S_{\lf} &=& \frac{z_{\lf} u_* - u_{\lf}}{z_{\lf}-1} \qquad \textrm{from \eqref{eq:shockspeed}}\\
&=&  u_{\lf} - \sqrt{ \frac{z_{\lf}}{2 - z_{\lf}} \left[ 3 \p_{11}^{\lf} + \half g h_{\lf} (3 - z_{\lf})\right] }, \qquad \textrm{from \eqref{eq:ustar} and \eqref{eq:dtotpsl}}
\label{eq:1shockspeed}
\end{eqnarray}
Similarly, the speed of the 6-shock is given by
\[
S_{\rg} = u_{\rg} + \sqrt{ \frac{z_{\rg}}{2 - z_{\rg}} \left[ 3 \p_{11}^{\rg} + \half g h_{\rg} (3 - z_{\rg})\right] }
\]
%-------------------------------------------------------------------------------
\begin{remark}
In HLL-type solvers, it is necessary to have estimates of the slowest and fastest speeds arising in the solution of the Riemann problem. If the 1-wave is a shock, then we would like a lower bound $\tilde{S}_{\lf}$ on this speed
\[
S_{\lf} \ge \tilde{S}_{\lf} := u_{\lf} - \sup_{z \in (1,2)} \sqrt{ \frac{z}{2 - z} \left[ 3 \p_{11}^{\lf} + \half g h_{\lf} (3 - z)\right] }
\]
But the supremum is $\infty$ and we do not get a useful lower bound.
\end{remark}
%-------------------------------------------------------------------------------
\subsection{Resumed computation of the intermediate states.}
For a Riemann problem, the left ($\con_{\lf}$)  and the right
($\con_{\rg}$) states are input data.
\begin{itemize}
\item
For given $\con_{\lf}$  and $\con_{\rg}$, the system
\eqref{eq:HugoniotRP} is solved and  $z_{\lf},z_{\rg}$ are obtained. Therefore,
\[
\begin{array}{rcl}
h_{*\lf} &= &z_{\lf} h_{\lf}, \\
h_{*\rg} &= &z_{\rg} h_{\rg},
\end{array}
\qquad
\begin{array}{rcll}
u_{*}  &=& g_-(z_{\lf}; h_{\lf}, u_{\lf}, \R_{11}^{\lf})  &= g_+(z_{\rg}; h_{\rg}, u_{\rg}, \R_{11}^{\rg}),
\\
\Pres_{*}  &= & f(z_{\lf}; h_{\lf}, \R_{11}^{\lf}) &= f(z_{\rg}; h_{\rg}, \R_{11}^{\rg}).
\end{array}
\]
The variables $h$ and $\Pres$ are now defined for all
  intermediate states. Using the definition of the total
  pressure $\Pres= \frac{gh^2}{2} + \R_{11}$, we can get $\R_{11}^{*\lf}$
  and  $\R_{11}^{*\rg}$ .

  \item When $z_{\lf} \le 1$  the 1-wave is a rarefaction. The associated Riemann invariants are used to
    compute $v_{*\lf}$, $\R_{12}^{*\lf}$  and $\R_{12}^{*\lf}$. The internal
    structure of the rarefaction is obtained by integration of
    equations for the 1-wave integral curve. Similarly,
    when $z_{\rg} \le 1$, Riemann invariants for 6-rarefaction are  used to
    compute $v_{*\rg}$, $\R_{12}^{*\rg}$  and $\R_{12}^{*\rg}$.

  \item When $z_{\lf} > 1$  the 1-wave is a shock. Then, generalized jump conditions  are used to
    compute $S_{\lf}$,  $v_{*\lf}$, $\R_{12}^{*\lf}$  and
    $\R_{22}^{*\lf}$.
    Similarly, when $z_{\rg} > 1$, the generalized jump conditions  are used to
    compute $S_{\rg}$,  $v_{*\rg}$, $\R_{12}^{*\rg}$  and $\R_{22}^{*\rg}$.

  \item At this step, $\con_{*\lf}$ and $\con_{*\rg}$ are defined. Using
    the appropriate Riemann invariants of the 2-wave, we get $v_{**}$,
    $\R_{12}^{**}$  and $\R_{22}^{*\lf}$. The invariants for the 5-wave
    give $\R_{22}^{*\rg}$.
\end{itemize}
    The computation of  intermediate states is then completed.
%-------------------------------------------------------------------------------
\subsection{Single shock solution}\label{sec:singshock}
Given the left state $(h_\lf, u_\lf, v_\lf, \p_{11}^\lf, \p_{12}^\lf, \p_{22}^\lf)$, let us find a right state that is connected by a 1-shock. We will take a value of $z = h_\rg/h_\lf \in (1,2)$. Then $h_\rg = z h_\lf$ and from the Hugoniot curve, we obtain
\[
\R_{11}^\rg = (2 h_\rg - h_\lf) \R_{11}^\lf/(2 h_\lf - h_\rg) - \half g (h_\lf - h_\rg)^3/(2 h_\lf - h_\rg)
\]
Then the velocity and shock speed are given by~\eqref{eq:ustar},~\eqref{eq:1shockspeed}
\[
u_\rg = u_\lf - \sqrt{\frac{(h_\rg - h_\lf)(\Pres_\rg - \Pres_\lf)}{h_\lf h_\rg}}, \qquad
S = u_\lf - \sqrt{\frac{z}{2-z}\left(3 \p_{11}^\lf + \half g h_\lf (3 - z) \right)}
\]
where $\Pres_\lf, \Pres_\rg$ are the total pressures. The remaining quantities can be computed using the procedure in Section~\ref{sec:shockjump}.
%-------------------------------------------------------------------------------
\subsection{Vacuum states}
A vacuum state refers to a zero value of water depth $h$ and is also called a dry state. For classical shallow water model, Riemann problems with vacuum states can be solved with rarefaction waves~\cite{Toro2001b,Pares2019}. The velocity in the vacuum state is allowed to be non-zero which is not physically meaningful since there is no fluid in this state, but we seek a mathematically correct solution. For the SSW model, let us consider a left non-vacuum state ($h_\lf > 0$, $\R^\lf > 0$) and a right vacuum state. In the vacuum state we also assume that the Reynolds tensor $\p^\rg=0$ and hence also $\R^\rg=0$. Let us first try to connect the states by a simple jump discontinuity moving at speed $S$. The jump condition of the $h$ equation yields $0 - h_{\lf} u_{\lf} = S (0 - h_{\lf})$ so that the discontinuity speed is $S = u_{\lf}$. From the jump condition of the $x$ momentum equation we get $0 - (\R_{11}^\lf + h_\lf u_{\lf}^2 + \half g h_\lf^2) = S(0 - h_\lf u_\lf) = - h_\lf u_\lf^2$ so that $\R_{11}^\lf + \half g h_\lf^2 = 0$, which implies that there is no solution.

We now try to connect the two states by a 1-rarefaction wave and make use of the invariants shown in equation~\eqref{eq:rareinv1}. The second invariant yields $u_\rg = u_\lf + \Afun(h_\lf,c_\lf)$ and the sixth invariant yields $v_\rg = v_\lf + \frac{2\p_{12}^\lf}{g h_\lf + 2 \p_{11}^\lf} \Afun(h_\lf,c_\lf)$. Similarly, if the left state is a vacuum state and the right state is a non-vacuum state, they can be connected by a 6-rarefaction wave.

\begin{figure}
\centering
\begin{tabular}{cc}
\includegraphics[width=0.3\textwidth]{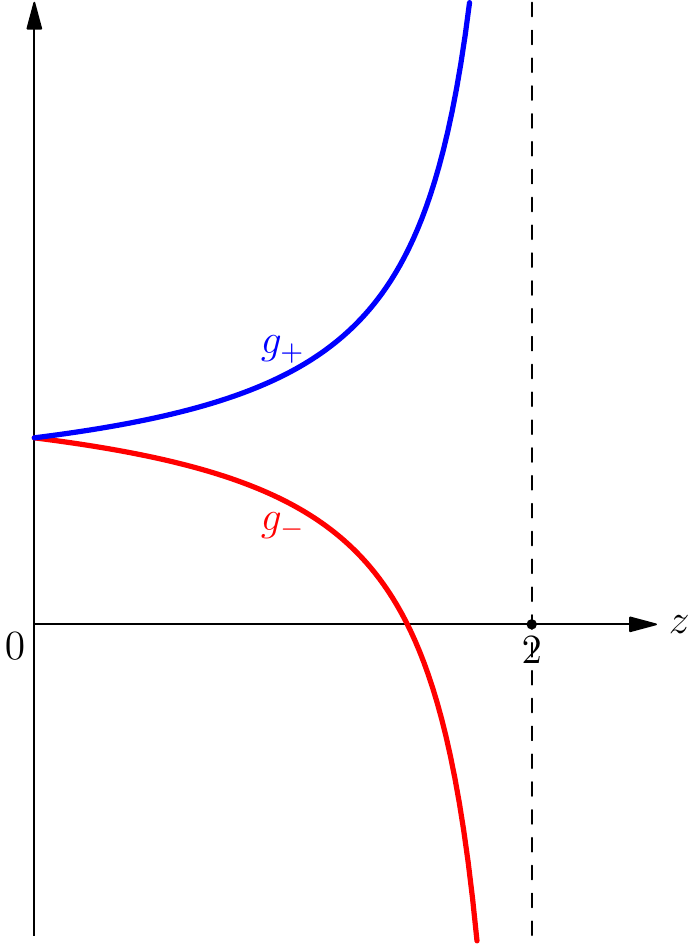} &
\includegraphics[width=0.3\textwidth]{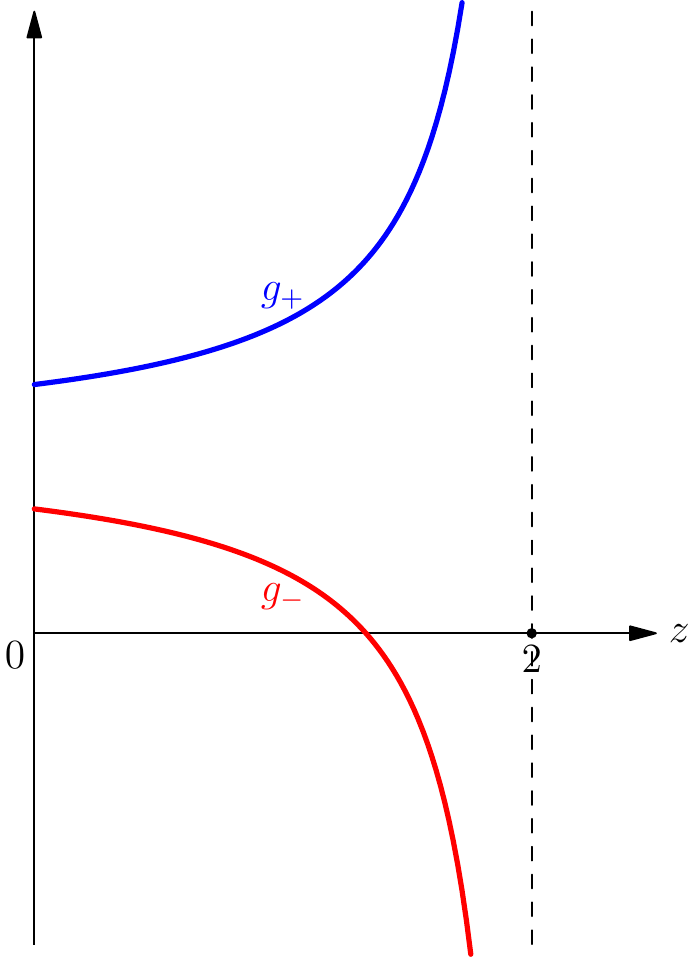} \\
(a) & (b) 
\end{tabular}
\caption{Illustration of the functions $g_\pm$ leading to intermediate vacuum state in the Riemann problem. (a) $u_{\rg} - u_{\lf} = \Afun(h_{\lf},c_{\lf}) + \Afun(h_{\rg},c_{\rg})$, (b) $u_{\rg} - u_{\lf} > \Afun(h_{\lf},c_{\lf}) + \Afun(h_{\rg},c_{\rg})$}
\label{fig:gmgp2}
\end{figure}
If an intermediate state is a vacuum state, say $h_{*\lf}=0$ then necessarily all the intermediate states in Figure~\ref{SSW_RPm_Waves} must be vacuum states, i.e., $h_{*\rg}=0$, since shear/contact waves cannot connect a vacuum state to a non-vacuum state. The constancy of $u$ in the intermediate states means that
\[
u_{*\lf} = u_\lf + \Afun(h_\lf, c_\lf) = u_\rg - \Afun(h_\rg, c_\rg) = u_{*\rg}
\]
i.e., we have equality in~\ref{eq:riemcond}. The functions $g_\pm$ in this case are shown in Figure~\ref{fig:gmgp2}a which shows that the solution of $G(z_\lf, z_\rg)=0$ is $z_\lf = z_\rg = 0$. On the other hand if $u_{\rg} - u_{\lf} > \Afun(h_{\lf},c_{\lf}) + \Afun(h_{\rg},c_{\rg})$ the functions $g_\pm$ are shown in Figure~\ref{fig:gmgp2}b and there is no solution to $G(z_\lf,z_\rg)=0$. But we can still construct a solution with a 1-rarefaction and 6-rarefaction with an intermediate vacuum state, but it will not be possible to find a proper solution that satisfies all the structure of the intermediate states as shown in Figure~\ref{SSW_RPm_Waves}, since $u_{*\lf} \ne u_{*\rg}$, see Figure~\ref{SSW_Vacuum_Waves}. However the momentum is constant and zero in the intermediate state which may be considered as a solution that satisfies all the jump conditions, but the velocity in the intermediate states is not well defined. In this sense, the solution of the Riemann problem can be extended to include vacuum states. We note that the solutions described in the next Theorem are admissible weak solutions, since they are continuous in $h$, $hv$, $E_{ij}$, variables and they reduce to smooth solutions in the intermediate regions (the 2 rarefaction waves and the vacuum states). We summarise the solution with vacuum states in the following theorem.
 \begin{figure}[tbp]
 \begin{center}
  \begin{tikzpicture}[scale=3.50]
 \coordinate (X0) at (   0.00000000     ,   0.00000000     );
 \coordinate (Xm) at (  -1.40129846E-45 ,   0.00000000     );
 \coordinate (Xp) at (   1.40129846E-45 ,   0.00000000     );
 \coordinate (Xn) at (   0.00000000     ,   1.40129846E-45 );
 \coordinate (Xd001) at (  -1.61687970     ,   0.00000000     );
 \coordinate (Xu001) at (  -1.61687970     ,   1.45000005     );
 \coordinate (Xd002) at (  -1.56445384     ,   0.00000000     );
 \coordinate (Xu002) at (  -1.56445384     ,   1.45000005     );
 \coordinate (Xd003) at (  -1.51500678     ,   0.00000000     );
 \coordinate (Xu003) at (  -1.51500678     ,   1.45000005     );
 \coordinate (Xd004) at (  -1.46828258     ,   0.00000000     );
 \coordinate (Xu004) at (  -1.46828258     ,   1.45000005     );
 \coordinate (Xd005) at (  -1.42405391     ,   0.00000000     );
 \coordinate (Xu005) at (  -1.42405391     ,   1.45000005     );
 \coordinate (Xd006) at (  -1.38211858     ,   0.00000000     );
 \coordinate (Xu006) at (  -1.38211858     ,   1.45000005     );
 \coordinate (Xd007) at (  -1.34229493     ,   0.00000000     );
 \coordinate (Xu007) at (  -1.34229493     ,   1.45000005     );
 \coordinate (Xd008) at (  -1.30442047     ,   0.00000000     );
 \coordinate (Xu008) at (  -1.30442047     ,   1.45000005     );
 \coordinate (Xd009) at (  -1.26834857     ,   0.00000000     );
 \coordinate (Xu009) at (  -1.26834857     ,   1.45000005     );
 \coordinate (Xd010) at (  -1.23394716     ,   0.00000000     );
 \coordinate (Xu010) at (  -1.23394716     ,   1.45000005     );
 \coordinate (Xd011) at (  -1.20109665     ,   0.00000000     );
 \coordinate (Xu011) at (  -1.20109665     ,   1.45000005     );
 \coordinate (Xd012) at (  -1.16968858     ,   0.00000000     );
 \coordinate (Xu012) at (  -1.16968858     ,   1.45000005     );
 \coordinate (Xd013) at (  -1.13962424     ,   0.00000000     );
 \coordinate (Xu013) at (  -1.13962424     ,   1.45000005     );
 \coordinate (Xd014) at (  -1.11081374     ,   0.00000000     );
 \coordinate (Xu014) at (  -1.11081374     ,   1.45000005     );
 \coordinate (Xd015) at (  -1.08317518     ,   0.00000000     );
 \coordinate (Xu015) at (  -1.08317518     ,   1.45000005     );
 \coordinate (Xd016) at (  -1.05663335     ,   0.00000000     );
 \coordinate (Xu016) at (  -1.05663335     ,   1.45000005     );
 \coordinate (Xd017) at (  -1.03111947     ,   0.00000000     );
 \coordinate (Xu017) at (  -1.03111947     ,   1.45000005     );
 \coordinate (Xd018) at (  -1.00657034     ,   0.00000000     );
 \coordinate (Xu018) at (  -1.00657034     ,   1.45000005     );
 \coordinate (Xd019) at ( -0.982927799     ,   0.00000000     );
 \coordinate (Xu019) at ( -0.982927799     ,   1.45000005     );
 \coordinate (Xd020) at ( -0.960138083     ,   0.00000000     );
 \coordinate (Xu020) at ( -0.960138083     ,   1.45000005     );
 \coordinate (Xd021) at ( -0.938151836     ,   0.00000000     );
 \coordinate (Xu021) at ( -0.938151836     ,   1.45000005     );
 \coordinate (Xd022) at ( -0.916923344     ,   0.00000000     );
 \coordinate (Xu022) at ( -0.916923344     ,   1.45000005     );
 \coordinate (Xd023) at ( -0.896410108     ,   0.00000000     );
 \coordinate (Xu023) at ( -0.896410108     ,   1.45000005     );
 \coordinate (Xd024) at ( -0.876572669     ,   0.00000000     );
 \coordinate (Xu024) at ( -0.876572669     ,   1.45000005     );
 \coordinate (Xd025) at ( -0.857374907     ,   0.00000000     );
 \coordinate (Xu025) at ( -0.857374907     ,   1.45000005     );
 \coordinate (Xd026) at ( -0.322138071     ,   0.00000000     );
 \coordinate (Xu026) at ( -0.322138071     ,   1.45000005     );
 \coordinate (Xd027) at (   4.08823341E-02 ,   0.00000000     );
% \coordinate (Xu027) at (   4.08823341E-02 ,   0.725000002     );
 \coordinate (Xu027) at (   4.08823341E-02 ,   1.45000002     );
 \coordinate (Xd028) at (  0.344088137     ,   0.00000000     );
 \coordinate (Xu028) at (  0.344088137     ,   1.45000005     );
 \coordinate (Xd029) at (  0.933582962     ,   0.00000000     );
 \coordinate (Xu029) at (  0.933582962     ,   1.45000005     );
 \coordinate (Xd030) at (  0.949876368     ,   0.00000000     );
 \coordinate (Xu030) at (  0.949876368     ,   1.45000005     );
 \coordinate (Xd031) at (  0.966609538     ,   0.00000000     );
 \coordinate (Xu031) at (  0.966609538     ,   1.45000005     );
 \coordinate (Xd032) at (  0.983802259     ,   0.00000000     );
 \coordinate (Xu032) at (  0.983802259     ,   1.45000005     );
 \coordinate (Xd033) at (   1.00147581     ,   0.00000000     );
 \coordinate (Xu033) at (   1.00147581     ,   1.45000005     );
 \coordinate (Xd034) at (   1.01965249     ,   0.00000000     );
 \coordinate (Xu034) at (   1.01965249     ,   1.45000005     );
 \coordinate (Xd035) at (   1.03835630     ,   0.00000000     );
 \coordinate (Xu035) at (   1.03835630     ,   1.45000005     );
 \coordinate (Xd036) at (   1.05761254     ,   0.00000000     );
 \coordinate (Xu036) at (   1.05761254     ,   1.45000005     );
 \coordinate (Xd037) at (   1.07744837     ,   0.00000000     );
 \coordinate (Xu037) at (   1.07744837     ,   1.45000005     );
 \coordinate (Xd038) at (   1.09789252     ,   0.00000000     );
 \coordinate (Xu038) at (   1.09789252     ,   1.45000005     );
 \coordinate (Xd039) at (   1.11897564     ,   0.00000000     );
 \coordinate (Xu039) at (   1.11897564     ,   1.45000005     );
 \coordinate (Xd040) at (   1.14073050     ,   0.00000000     );
 \coordinate (Xu040) at (   1.14073050     ,   1.45000005     );
 \coordinate (Xd041) at (   1.16319227     ,   0.00000000     );
 \coordinate (Xu041) at (   1.16319227     ,   1.45000005     );
 \coordinate (Xd042) at (   1.18639874     ,   0.00000000     );
 \coordinate (Xu042) at (   1.18639874     ,   1.45000005     );
 \coordinate (Xd043) at (   1.21038997     ,   0.00000000     );
 \coordinate (Xu043) at (   1.21038997     ,   1.45000005     );
 \coordinate (Xd044) at (   1.23520935     ,   0.00000000     );
 \coordinate (Xu044) at (   1.23520935     ,   1.45000005     );
 \coordinate (Xd045) at (   1.26090300     ,   0.00000000     );
 \coordinate (Xu045) at (   1.26090300     ,   1.45000005     );
 \coordinate (Xd046) at (   1.28752148     ,   0.00000000     );
 \coordinate (Xu046) at (   1.28752148     ,   1.45000005     );
 \coordinate (Xd047) at (   1.31511819     ,   0.00000000     );
 \coordinate (Xu047) at (   1.31511819     ,   1.45000005     );
 \coordinate (Xd048) at (   1.34375131     ,   0.00000000     );
 \coordinate (Xu048) at (   1.34375131     ,   1.45000005     );
 \coordinate (Xd049) at (   1.37348390     ,   0.00000000     );
 \coordinate (Xu049) at (   1.37348390     ,   1.45000005     );
 \coordinate (Xd050) at (   1.40438342     ,   0.00000000     );
 \coordinate (Xu050) at (   1.40438342     ,   1.45000005     );
 \coordinate (Xd051) at (   1.43652391     ,   0.00000000     );
 \coordinate (Xu051) at (   1.43652391     ,   1.45000005     );
 \coordinate (Xd052) at (   1.46998525     ,   0.00000000     );
 \coordinate (Xu052) at (   1.46998525     ,   1.45000005     );
 \coordinate (Xd053) at (   1.50485456     ,   0.00000000     );
 \coordinate (Xu053) at (   1.50485456     ,   1.45000005     );
 \coordinate (Xw001) at (  -1.73006141     ,   0.00000000     );
 \coordinate (Xw002) at ( -0.589756489     ,   0.00000000     );
 \coordinate (Xw003) at ( -0.140627861     ,   0.00000000     );
 \coordinate (Xw004) at (  0.192485243     ,   0.00000000     );
 \coordinate (Xw005) at (  0.638835549     ,   0.00000000     );
 \coordinate (Xw006) at (   1.61019444     ,   0.00000000     );
 \draw[red,line width=0.8mm,dashed]  (Xd001) -- (Xu001) node[near end,sloped,above] {\pmb{$\xi = u_{\lf} - \coefa_{\lf}$}};
 \draw[blue,line width=0.2mm,dashed]  (Xd002) -- (Xu002);
 \draw[blue,line width=0.2mm,dashed]  (Xd003) -- (Xu003);
 \draw[blue,line width=0.2mm,dashed]  (Xd004) -- (Xu004);
 \draw[blue,line width=0.2mm,dashed]  (Xd005) -- (Xu005);
 \draw[blue,line width=0.2mm,dashed]  (Xd006) -- (Xu006);
 \draw[blue,line width=0.2mm,dashed]  (Xd007) -- (Xu007);
 \draw[blue,line width=0.2mm,dashed]  (Xd008) -- (Xu008);
 \draw[blue,line width=0.2mm,dashed]  (Xd009) -- (Xu009);
 \draw[blue,line width=0.2mm,dashed]  (Xd010) -- (Xu010);
 \draw[blue,line width=0.2mm,dashed]  (Xd011) -- (Xu011);
% \draw[magenta,line width=0.8mm]  (Xd012) -- (Xu012) node[near end,sloped,above] {};
 \draw (Xu012) node[anchor=south] {    $\!\!\!\!\! \begin{array}{c} \\ \text{  1-Rarefaction } \end{array}   $};
 \draw[blue,line width=0.2mm,dashed]  (Xd013) -- (Xu013);
 \draw[blue,line width=0.2mm,dashed]  (Xd014) -- (Xu014);
 \draw[blue,line width=0.2mm,dashed]  (Xd015) -- (Xu015);
 \draw[blue,line width=0.2mm,dashed]  (Xd016) -- (Xu016);
 \draw[blue,line width=0.2mm,dashed]  (Xd017) -- (Xu017);
 \draw[blue,line width=0.2mm,dashed]  (Xd018) -- (Xu018);
 \draw[blue,line width=0.2mm,dashed]  (Xd019) -- (Xu019);
 \draw[blue,line width=0.2mm,dashed]  (Xd020) -- (Xu020);
 \draw[blue,line width=0.2mm,dashed]  (Xd021) -- (Xu021);
 \draw[blue,line width=0.2mm,dashed]  (Xd022) -- (Xu022);
 \draw[blue,line width=0.2mm,dashed]  (Xd023) -- (Xu023);
 \draw[blue,line width=0.2mm,dashed]  (Xd024) -- (Xu024);
 \draw[red,line width=0.8mm, dashed]  (Xd025) -- (Xu025) node[near end,sloped,below] {\pmb{$\xi = u_{*\lf} $}};
% \draw[blue,line width=0.8mm]  (Xd026) -- (Xu026) node[near end,sloped,above] {};
% \draw (Xu026) node[anchor=south] {   $ \begin{array}{c} \\ \text{ Shear }\\  \pmb{\xi = u_{*} - \coefb_{*\lf}}\end{array} $};
% \draw[line width=0.8mm]  (Xd027) -- (Xu027) node[near end,sloped,above] {};
 \draw (Xu027) node[anchor=south] {Vacuum};
% \draw[blue,line width=0.8mm]  (Xd028) -- (Xu028) node[near end,sloped,below] {};
% \draw (Xu028) node[anchor=south] {  $ \begin{array}{c} \\ \text{ Shear }\\  \pmb{\xi = u_{*} +\coefb_{*\rg}}\end{array} $};
 \draw[red,line width=0.8mm,dashed]  (Xd029) -- (Xu029) node[near end,sloped,above] {\pmb{$\xi = u_{*\rg} $}};
 \draw[blue,line width=0.2mm,dashed]  (Xd030) -- (Xu030);
 \draw[blue,line width=0.2mm,dashed]  (Xd031) -- (Xu031);
 \draw[blue,line width=0.2mm,dashed]  (Xd032) -- (Xu032);
 \draw[blue,line width=0.2mm,dashed]  (Xd033) -- (Xu033);
 \draw[blue,line width=0.2mm,dashed]  (Xd034) -- (Xu034);
 \draw[blue,line width=0.2mm,dashed]  (Xd035) -- (Xu035);
 \draw[blue,line width=0.2mm,dashed]  (Xd036) -- (Xu036);
 \draw[blue,line width=0.2mm,dashed]  (Xd037) -- (Xu037);
 \draw[blue,line width=0.2mm,dashed]  (Xd038) -- (Xu038);
 \draw[blue,line width=0.2mm,dashed]  (Xd039) -- (Xu039);
% \draw[magenta,line width=0.8mm]  (Xd040) -- (Xu040) node[near end,sloped,below] {};
 \draw (Xu040) node[anchor=south] {    $\qquad \begin{array}{c}  \text{ 6-Rarefaction }  \end{array}   $};
 \draw[blue,line width=0.2mm,dashed]  (Xd041) -- (Xu041);
 \draw[blue,line width=0.2mm,dashed]  (Xd042) -- (Xu042);
 \draw[blue,line width=0.2mm,dashed]  (Xd043) -- (Xu043);
 \draw[blue,line width=0.2mm,dashed]  (Xd044) -- (Xu044);
 \draw[blue,line width=0.2mm,dashed]  (Xd045) -- (Xu045);
 \draw[blue,line width=0.2mm,dashed]  (Xd046) -- (Xu046);
 \draw[blue,line width=0.2mm,dashed]  (Xd047) -- (Xu047);
 \draw[blue,line width=0.2mm,dashed]  (Xd048) -- (Xu048);
 \draw[blue,line width=0.2mm,dashed]  (Xd049) -- (Xu049);
 \draw[blue,line width=0.2mm,dashed]  (Xd050) -- (Xu050);
 \draw[blue,line width=0.2mm,dashed]  (Xd051) -- (Xu051);
 \draw[blue,line width=0.2mm,dashed]  (Xd052) -- (Xu052);
 \draw[red,line width=0.8mm, dashed]  (Xd053) -- (Xu053) node[near end,sloped,below] {\pmb{$\xi = u_{\rg} + \coefa_{\rg}$}};
 \draw (Xw001) node[anchor=south east] {$\Large   \begin{array}{c} h_{\lf}\\[2mm] u_{\lf}\\[2mm] v_{\lf}\\[2mm] \Pres_{\lf}\\[2mm]   \R_{12}^{\lf} \\[2mm] \R_{22}^{\lf}  \end{array} $};
 \draw (Xw002) node[anchor=south west] {$\!\!\!\!\!\!\Large   \begin{array}{c} \\ h_{*\lf}=0\\[2mm] u_{*\lf}\\[2mm] v_{*\lf}\\[2mm] \Pres_{*}=0\\[2mm]   \R_{12}^{*\lf}=0 \\[2mm] \R_{22}^{*\lf}=0   \end{array} $\!};
% \draw (Xw003) node[anchor=south] {$\Large   \begin{array}{c} h_{*\lf}=0\\[2mm] u_{*}\\[2mm] v_{**}\\[2mm] \Pres_{*}=0\\[2mm]   \R_{12}^{**}=0 \\[2mm] \R_{22}^{**\lf}  \end{array} $};
% \draw (Xw004) node[anchor=south] {$\Large   \begin{array}{c} h_{*\rg}\\[2mm] u_{*}\\[2mm] v_{**}\\[2mm] \Pres_{*}\\[2mm]   \R_{12}^{**} \\[2mm] \R_{22}^{**\rg}  \end{array} $};
 \draw (Xw005) node[anchor=south east] {$\Large   \begin{array}{c} h_{*\rg}=0\\[2mm] u_{*\rg}\\[2mm] v_{*\rg}\\[2mm] \Pres_{*}=0\\[2mm]   \R_{12}^{*\rg}=0 \\[2mm] \R_{22}^{*\rg}=0  \end{array} \!\!\!\!\!\!$};
 \draw (Xw006) node[anchor=south west] {$\Large   \begin{array}{c} h_{\rg}\\[2mm] u_{\rg}\\[2mm] v_{\rg}\\[2mm] \Pres_{\rg}\\[2mm]   \R_{12}^{\rg} \\[2mm] \R_{22}^{\rg}  \end{array} $};
   \end{tikzpicture}
   \caption{Shear Shallow Water (SSW) model: Wave structure of the
     1-D Riemann problem in presence of vacuum, when $u_{\rg} -
     u_{\lf} \geq \Afun(h_{\lf},c_{\lf}) +
     \Afun(h_{\rg},c_{\rg})$.
 Formally, without giving it a physical meaning because the depth and momentum are zero, we can define the velocities of intermediate states as :
      $u_{*\lf}= u_{\lf} + \Afun(h_\lf,c_\lf)$,
     $v_{*\lf} =v_\lf + \frac{2\p_{12}^\lf}{g h_\lf + 2 \p_{11}^\lf} \Afun(h_\lf,c_\lf)$,
     $u_{*\rg}= u_{\rg} - \Afun(h_\rg,c_\rg)$ and 
     $v_{*\rg}  =v_\rg - \frac{2\p_{12}^\rg}{g h_\rg + 2 \p_{11}^\rg} \Afun(h_\rg,c_\rg)$
   }
\label{SSW_Vacuum_Waves}
\end{center}
\end{figure}
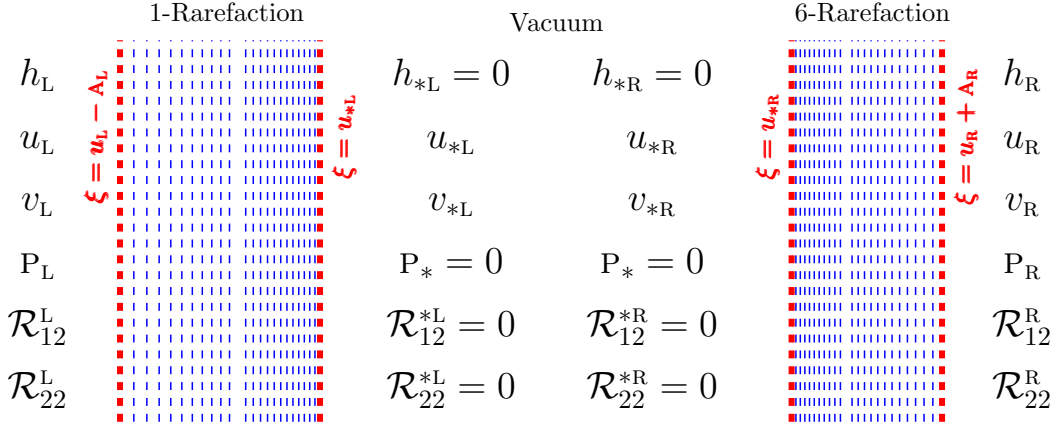

%-------------------------------------------------------------------------------
\begin{theorem}
(1)~If the left state is non-vacuum state and the right state is a vacuum state such that $u_\lf + \Afun(h_\lf, c_\lf) = u_\rg$ and $v_\rg = v_\lf + \frac{2\p_{12}^\lf}{g h_\lf + 2 \p_{11}^\lf} \Afun(h_\lf,c_\lf)$, then they can be connected by a 1-rarefaction wave.
(2)~If the left state is a vacuum state and the right state is a non-vacuum state such that $u_\lf = u_\rg - \Afun(h_\rg, c_\rg)$ and $v_{\lf} = v_\rg - \frac{2\p_{12}^\rg}{g h_\rg + 2 \p_{11}^\rg} \Afun(h_\rg,c_\rg)$, then they can be connected by a 6-rarefaction wave. 
(3)~If $u_{\rg} - u_{\lf} = \Afun(h_{\lf},c_{\lf}) + \Afun(h_{\rg},c_{\rg})$, then they can be connected with a 1-rarefaction and 6-rarefaction wave with an intermediate vacuum state and velocity $u_* = u_\lf + \Afun(h_\lf, c_\lf) = u_\rg - \Afun(h_\rg, c_\rg)$.  (4)~If $u_{\rg} - u_{\lf} > \Afun(h_{\lf},c_{\lf}) + \Afun(h_{\rg},c_{\rg})$, then they can be connected with a 1-rarefaction and 6-rarefaction wave with an intermediate vacuum state, see Figure~\ref{SSW_Vacuum_Waves}.
\end{theorem}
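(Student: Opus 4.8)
The plan is to construct each connection directly from the Riemann invariants of the genuinely nonlinear fields, using the elementary fact that $\Afun(0,c)=0$ for every fixed $c>0$, since both terms in~\eqref{eq:Afun} vanish as $h\to0$. For part~(1) the right state is vacuum, so $h_\rg=0<h_\lf$ and the $1$-rarefaction admissibility condition holds. I would integrate the integral curve of $\bm{r}_1$ from $\con_\lf$ as $h$ decreases to $0$; along it the invariants~\eqref{eq:rareinv1} are constant, in particular $c=\p_{11}/h^2=c_\lf$ and $\p_{12}/(gh+2\p_{11})$ stay fixed, which keeps $\p_{11}=c_\lf h^2>0$ and the enstrophy $\detP/h^2$ positive so that $\R>0$ all the way down to vacuum. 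Passing to the limit $h\to0^+$ in the second invariant $u+\Afun(h,c)$ gives $u_\rg=u_\lf+\Afun(h_\lf,c_\lf)$, and in the fifth invariant $v+\tfrac{2\p_{12}}{gh+2\p_{11}}\Afun(h,c)$ gives $v_\rg=v_\lf+\tfrac{2\p_{12}^\lf}{gh_\lf+2\p_{11}^\lf}\Afun(h_\lf,c_\lf)$, which are exactly the stated hypotheses, so the two states lie on a common $1$-rarefaction curve. Finally I would verify the Lax condition using the internal-structure relation $\ud\xi=-\tfrac{3g+12ch}{2\sqrt{gh+3ch^2}}\,\ud h$: the characteristic speed $\xi=\lambda_1$ increases monotonically as $h$ falls from $h_\lf$ to $0$ and terminates at the finite value $u_\lf+\Afun(h_\lf,c_\lf)$, so the fan is a genuine rarefaction.

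Part~(2) is the mirror image: now $h_\lf=0<h_\rg$, so the $6$-rarefaction admissibility $h_\lf<h_\rg$ holds, and taking $h\to0^+$ in the invariants~\eqref{eq:rareinv6} $u-\Afun(h,c)$ and $v-\tfrac{2\p_{12}}{gh+2\p_{11}}\Afun(h,c)$ yields $u_\lf=u_\rg-\Afun(h_\rg,c_\rg)$ and $v_\lf=v_\rg-\tfrac{2\p_{12}^\rg}{gh_\rg+2\p_{11}^\rg}\Afun(h_\rg,c_\rg)$, the two prescribed relations.

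For part~(3) I would glue a $1$-rarefaction descending from $\con_\lf$ to vacuum with a $6$-rarefaction ascending from vacuum to $\con_\rg$. By the computations in parts~(1)--(2) the left fan terminates at velocity $u_{*\lf}=u_\lf+\Afun(h_\lf,c_\lf)$ and the right fan begins at $u_{*\rg}=u_\rg-\Afun(h_\rg,c_\rg)$. The hypothesis $u_\rg-u_\lf=\Afun(h_\lf,c_\lf)+\Afun(h_\rg,c_\rg)$ is precisely the identity $u_{*\lf}=u_{*\rg}=:u_*$, so the two fans share a single intermediate velocity and the solution closes with a well-defined vacuum value $u_*$ between the waves, matching the structure of Figure~\ref{SSW_RPm_Waves}.

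Part~(4) uses the same pair of fans, but under the strict inequality one has $u_{*\lf}=u_\lf+\Afun(h_\lf,c_\lf)<u_\rg-\Afun(h_\rg,c_\rg)=u_{*\rg}$, so the velocities no longer match and a pointwise velocity in the vacuum zone is ambiguous. The step I expect to be the crux is justifying that this is still an admissible weak solution; I would argue at the level of the conserved variables, noting that throughout the intermediate region $h\equiv0$, $h\vel\equiv0$ and $\E_{ij}\equiv0$, so $\con$ is continuous (indeed identically zero) across the vacuum while the outer fans are smooth self-similar solutions. Every weak-form relation is then satisfied trivially, even though $\vel=(h\vel)/h$ is undefined there. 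This is consistent with the discussion preceding the theorem, and the monotone behaviour of $g_\pm$ displayed in Figure~\ref{fig:gmgp2} shows that $G(z_\lf,z_\rg)=0$ admits no non-vacuum root in this regime, so the vacuum construction is the only consistent solution.
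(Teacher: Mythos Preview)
Your proposal is correct and follows essentially the same route as the paper: the theorem is stated as a summary of the discussion in the vacuum-states subsection, where the paper likewise uses the Riemann invariants~\eqref{eq:rareinv1},~\eqref{eq:rareinv6} evaluated at $h\to0$ to obtain the relations in parts~(1)--(2), glues the two fans for part~(3) via $u_{*\lf}=u_{*\rg}$, and justifies part~(4) by observing that $h$, $h\vel$, $\E_{ij}$ vanish identically in the vacuum region so the construction is an admissible weak solution despite the velocity being undefined there. Your write-up is in fact slightly more explicit than the paper's in two respects: you spell out that $\Afun(0,c)=0$, and you verify the Lax ordering $\lambda_1$ monotone in $h$ via the internal-structure ODE, whereas the paper leaves both implicit.
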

%-------------------------------------------------------------------------------
\section{Brief description of path conservative schemes}
We refer the reader to \cite{Pares2006} for a good general introduction to the concept of path conservative numerical schemes for non-conservative systems, and to~\cite{Chandrashekar2020} for a discussion specific to the present model.  The Riemann problem is the building block of a finite volume method and this approach can be used for non-conservative systems also~\cite{Gosse2001,Pares2006}. The main idea is to split the {\em fluctuation} into two parts corresponding to left moving and right moving waves arising in the Riemann solution, where the fluctuation is defined as
\[
\D(\conl, \conr) = \int_0^1 \A(\Psi(\xi; \conl, \conr))  \dd{\Psi}{\xi}(\xi; \conl, \conr) \ud \xi = \D^-(\conl,\conr) + \D^+(\conl, \conr)
\]
The splitting of the fluctuation can be performed using a Roe-type Riemann solver or HLL-type Riemann solver, the latter being the approach taken in the present work and following~\cite{Chandrashekar2020}. HLL-type methods model the Riemann solution by simple waves and require estimation of the smallest and largest wave speed arising in the Riemann problem. Assume that there are $m$ simple waves in the approximate Riemann solution with $m-1$ intermediate states. Let us denote the wave speeds as $S_j$, $j=1,\ldots,m$ and the intermediate states as $\con_j^*$, $j=1,\ldots,m-1$ with $\con_0^* = \conl$ and $\con_m^* = \conr$. The fluctuation splitting is given by
\[
\D^\pm(\conl,\conr) = \sum_{j=1}^m S_j^\pm (\con^*_{j+1} - \con^*_j)
\]
where
\[
S^- = \min(0, S), \qquad S^+ = \max(0,S)
\]
The intermediate states are obtained by satisfying the Rankine-Hugoniot conditions across all the waves. The approximate Riemann solvers of different complexity based on the number of waves including in the model can be derived. In~\cite{Chandrashekar2020}, two wave HLL solver, three wave HLLC3 solver and five wave HLLC5 approximate Riemann solvers have been constructed by using the generalized jump conditions. The HLL solver contains only the slowest and fastest waves in its model; the HLLC3 solver also includes the contact wave while the HLLC5 solver includes all five waves.

Let us consider a partition of the domain into disjoint cells of size $\dx$.  Let $\con_j^n$ denote the approximation of the cell average value in the $j$'th cell at time $t=t_n$.  The first order scheme is given by
\[
\con_j^{n+1} = \con_j^n - \frac{\dt}{\dx} ( \D^{+,n}_\jmh + \D^{-,n}_\jph) + \dt \s(\con_j^{n+\theta}), \qquad
\D_\jph^{\pm,n} = \D^{\pm}(\con_j^n, \con_{j+1}^n)
\]
For $\theta=0$ we obtain an explicit scheme and for $\theta=1$ we obtain a semi-implicit scheme; however the coupling in the semi-implicit scheme is only local to the cell. An exact solution process for the semi-implicit scheme is explained in the Appendix of~\cite{Chandrashekar2020}. If the system is conservative, i.e., $\A = \f'(\con)$ for some $\f$, then the above scheme can be written in conservation form with some numerical flux function~\cite{Pares2006}. Such a scheme can be made higher order accurate using a MUSCL-Hancock approach as in~\cite{Chandrashekar2020} or using a method of lines approach combined with a high order Runge-Kutta scheme. The numerical computations used in this work are based on a MUSCL-Hancock approach as explained in~\cite{Chandrashekar2020}.
%-------------------------------------------------------------------------------
\subsection{Estimation of wave speeds}
The approximate Riemann solver requires an estimate of the slowest and fastest wave speeds which should enclose the exact wave speeds in order for the entropy condition to be satisfied. One commonly used method to estimate the wave speeds in the Riemann problem uses a combination of the left and right states and the Roe average state~\cite{Einfeldt1988}; following this idea we can use the following speed estimates
\[
S_{\lf}^{HLL} = \min\{ \lambda_1(\prim_{\lf}), \lambda_1(\bar{\prim})\}, \qquad S_{\rg}^{HLL} = \max\{ \lambda_6(\prim_{\rg}), \lambda_6(\bar{\prim})\}, \qquad \bar{\prim} = \half (\prim_{\lf} + \prim_{\rg})
\]
where $\prim$ represents the variables $(h,\vel,\R)$ and we use the arithmetic average instead of the Roe average. If $S_{\lf}^{ex}$, $S_{\rg}^{ex}$ denote the exact wave speeds, then we require that $S_{\lf}^{HLL} \le S_{\lf}^{ex}$ and $S_{\rg}^{HLL} \ge S_{\rg}^{ex}$, but this is not guaranteed to hold with the above estimates. As an example, consider the dam break problem from Section~\ref{sec:dambreak} for which the slowest and fastest speeds are
\[
S_{\lf}^{ex} = -0.44328320518603004, \qquad S_{\rg}^{ex} = 0.43554139386439333
\]
whereas the speed estimate obtained from the above formulae are
\[
S_{\lf}^{HLL} = -0.44328320518603004, \qquad S_{\rg}^{HLL} =  0.38399218742052554
\]
We see that fastest speed $S_{\rg}$ is very much under estimated and this may cause numerical problems like loss of positivity and violation of entropy condition. How to obtain better estimates of the slowest and fastest speeds without using the exact Riemann solution is an open question. In the present work we use a simple way to over-estimate the speeds by using both the states to estimate the speeds as follows
\begin{equation} \label{eq:speedapp}
S_{\lf}^{HLL*} = \min\{ \lambda_1(\prim_{\lf}), \lambda_1(\prim_{\rg}), \lambda_1(\bar{\prim})\}, \qquad S_{\rg}^{HLL*} = \max\{ \lambda_6(\prim_{\lf}), \lambda_6(\prim_{\rg}), \lambda_6(\bar{\prim})\}
\end{equation}
For the dam break problem, this yields
\[
S_{\lf}^{HLL*} = -0.44328320518603004, \qquad S_{\rg}^{HLL*} = 0.44328320518603004
\]
Now the fastest speeds is also estimated in such a way that the numerical Riemann fan bounds the exact Riemann fan. We use the above estimate in all the approximate Riemann solvers used in this study.
%-------------------------------------------------------------------------------
\section{Exact solutions compared  with approximate Riemann solvers}
In the next few sections, we compare the exact solutions with numerical solutions obtained with approximate Riemann solvers using a second order accurate MUSCL-Hancock scheme~\cite{Chandrashekar2020}. Unless stated otherwise, we use the speed estimates given by~\eqref{eq:speedapp} in all the test cases. We show results obtained from second order numerical scheme in most of the test cases since we do not observe any qualitative difference between first and second order results, but in some test cases, where significant differences are found, we show first order results also. In all the tests, the bottom topography is constant and the source term $\s$ is absent, since we want to study the purely hyperbolic problem.
%-------------------------------------------------------------------------------
\subsection{Dam break problem} \label{sec:dambreak}
 We consider here the test case used in \cite{Bhole2019,Gavrilyuk2018,Chandrashekar2020}. It is a Riemann problem where, initially, the velocity is zero every where, the stress tensor is constant and only the initial depth has a jump,
 \[
 h = \left \{ \begin{array}{rl}
 0.02, & x < 0.5 \\[3mm]
 0.01, & x > 0.5
  \end{array} \right ., \quad
  u = 0, \quad v=0,\quad
 \p_{11} =  10^{-4}, \quad \p_{12} =  0, \quad \p_{22} =  10^{-4}.
  \]
For this Riemann data, we can compute the associated analytical solution. Numerical approximations are performed with HLL and HLLC (3-waves and 5-waves) Riemann solvers (see \cite{Chandrashekar2020} for details). Figure~\ref{fig:DamB} shows that the exact and the approximate solutions are almost comparable, except for the shock front. The HLL and HLLC Riemann solvers are converging to the same limit.  However, in accordance with~\cite{Chandrashekar2020}, the numerical limit does not match with the exact solution as seen in Figure~\ref{fig:DamBF}.  This is probably related to the fact that $\p_{11}$ is too small; initially we have $\p_{11} =10^{-4}$ and $\coefb = \sqrt{\p_{11}} =10^{-2}$ . Indeed, as $\coefb$ goes to zero, the shear and the contact waves approach one another and they coincide in the limit of $\p_{11}=0$. The Riemann solvers used here are not designed to get the proper behaviour at this asymptotic case. The approximate Riemann solvers used here are not designed to strictly conserve the total energy~\eqref{eq:totE}. As shown in Theorem~\ref{lem:totE}, the jump condition of total energy equation is automatically satisfied by the jump conditions of the SSW model. The approximate Riemann solver is based on satisfying these jump conditions and we can expect approximate conservation in the numerical scheme also. To examine the conservation of total energy in the domain, we plot it as a function of time in Figure~\ref{fig:totEdb}, where the ratio of total energy at time $t$ to that at initial time is shown. We see that it is not strictly conserved by the numerical scheme but there is a dissipation of this energy, with the error at the final time being about 0.15\% on the coarse mesh. At the PDE level, the total energy is conserved for inviscid problems ($\Diss=0$).  At the discrete level, this property is satisfied if we solve the conservative form of the total energy, which is not the case here. Nevertheless, it is possible to strengthen this conservation law, either by using an augmented system~\cite{Gavrilyuk2018} or by redistributing the energy conservation defect on the pressure tensor as done in~\cite{Busto2021}.

A modified test case has been designed in order to keep $\p_{11}$ away from zero. The Riemann data is given by
\[
h = \left \{ \begin{array}{rl}
 0.02, & x < 0.5 \\[3mm]
 0.01, & x > 0.5
  \end{array} \right ., \quad
  u = 0, \quad v=0,\quad
  \p_{11} =  4 \times 10^{-2}, \quad \p_{12} =  0, \quad \p_{22} =  4 \times 10^{-2}.
\]
In this modified context the numerical solution does not contain any more defect in the shock front propagation with respect to the exact solution, even at low numerical resolution as shown in Figure~\ref{fig:DamBMM}. The convergence to the analytical solution is also observed in Figures~\ref{fig:DamBMMF} and \ref{fig:DamBMM}.  Thus it seems that we are facing here a lack of asymptotic preserving property of the numerical schemes when $\p_{11}$ goes to zero. At this asymptotic, the shock front seems to be not accurately resolved with the current schemes, when compared with the designed exact solution. Note that disagreement only occurs at the shock front and elsewhere the numerical approximations converge to the analytical solution. The convergence is observed even at the shock front when the value of $\p_{11}$ is not too small. Similar convergence is also observed if we use $\p_{11} = 4 \times 10^{-2}$, $\p_{12} = 0$ and $\p_{22} = 10^{-8}$.

We also test another  variant of the modified dam break problem, where  $\p_{12}$ is set to a small non zero value,
\[
h = \left \{ \begin{array}{rl}
 0.02, & x < 0.5 \\[3mm]
 0.01, & x > 0.5
  \end{array} \right ., \quad
  u = 0, \quad v=0,\quad
  \p_{11} =  4 \times 10^{-2}, \quad \p_{12} =  10^{-8}, \quad \p_{22} =  4\times 10^{-2}.
\]
The numerical approximation, even on a coarse mesh as shown in Figure~\ref{fig:DamBM}, fit very well with the designed exact solution. With this modification, the profile of $\p_{12}$ shows all the five waves of the SSW system. As expected, the intermediate waves are better resolved by the HLLC schemes.

As shown in Figure~\ref{fig:CDamBMM}, the mesh convergence is observed for the three Riemann solvers used, both for the dam break and for the modified dam break problems. Indeed, the different numerical approaches converge asymptotically to the same numerical solution, as the mesh becomes more and more refined. Nevertheless, for the initial dam break problem, the numerical solutions converge to a different solution than the one obtained analytically as seen in the left figure; with the HLL solution being slightly different from the HLLC solvers. On the other hand, for the modified dam break problem shown on the right, where the determinant of $\p$ is not as close to zero, the numerical solutions overlap closely with the analytical solution. However, there is still a small difference in the $\p_{11}$ values around the shock as shown in the inset figure. The convergence of the $L^1$ errors with respect to the exact solution are shown in Figure~\ref{fig:convdb} where we see that both test cases converge to a solution different from the exact solution. The modified dam break case converges to smaller errors but eventually the convergence stalls, which is expected since we have already observed this in Figure~\ref{fig:CDamBMM}.
%-------------------------------------------------------------------------------
\begin{figure}
\begin{center}
\includegraphics[width=0.49\textwidth]{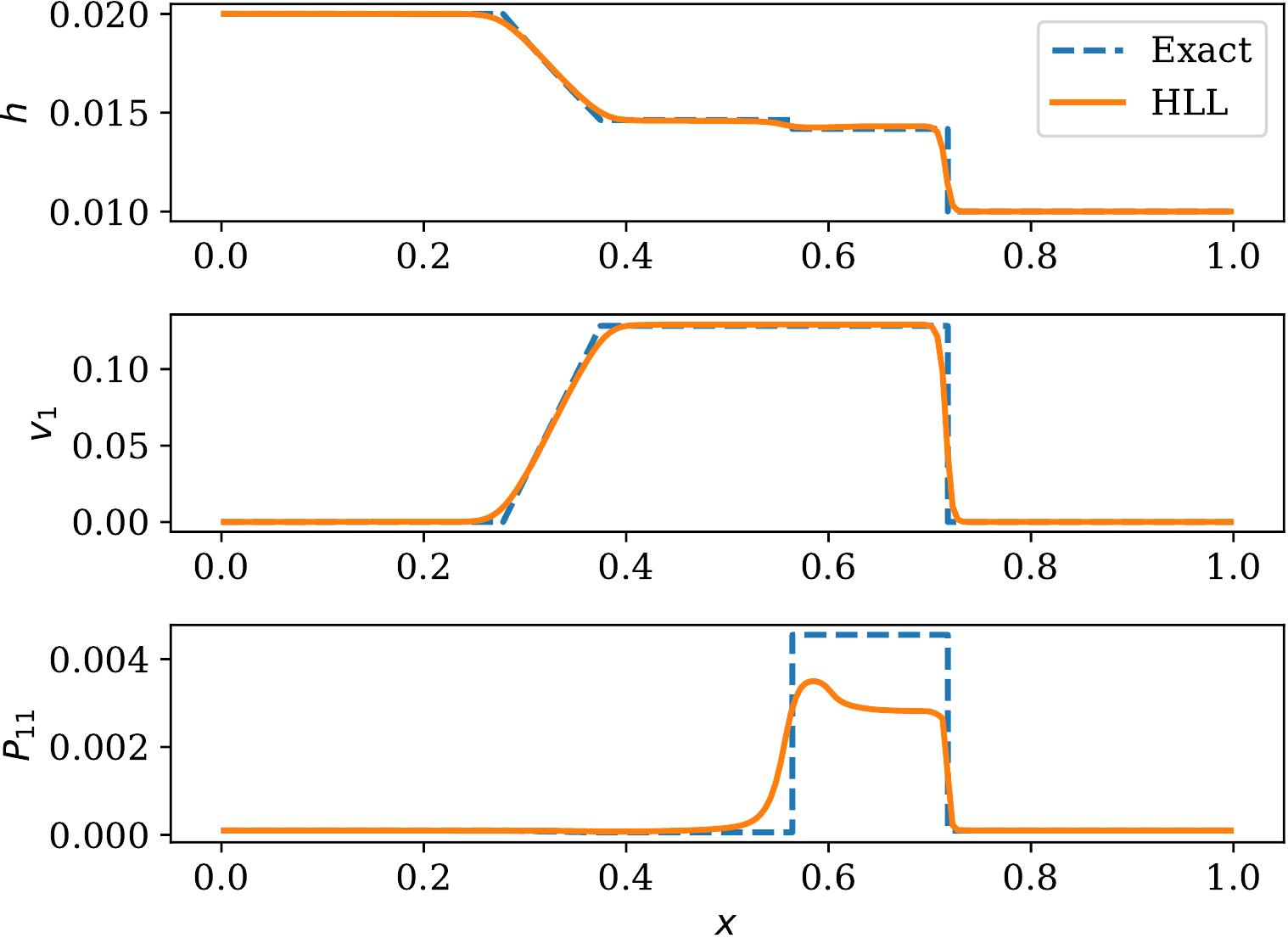}
\includegraphics[width=0.49\textwidth]{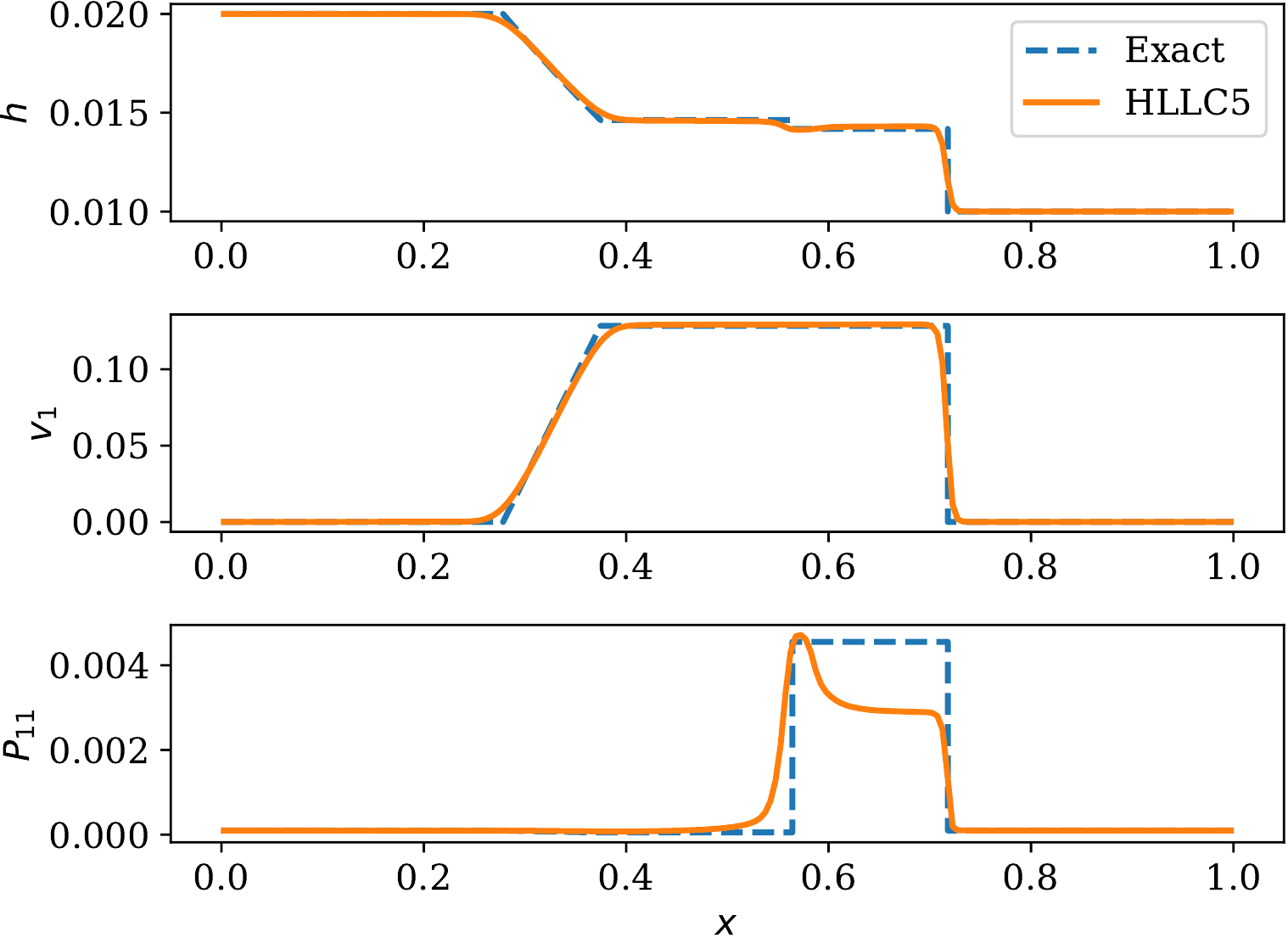}
\caption{Dam break test case with  200 cells and second order approximations. Comparison between exact and numerical solutions obtained with HLL (left) and HLLC5 (right) schemes.}
\label{fig:DamB}
\end{center}
\end{figure}
%-------------------------------------------------------------------------------
 \begin{figure}
\begin{center}
\includegraphics[width=0.49\textwidth]{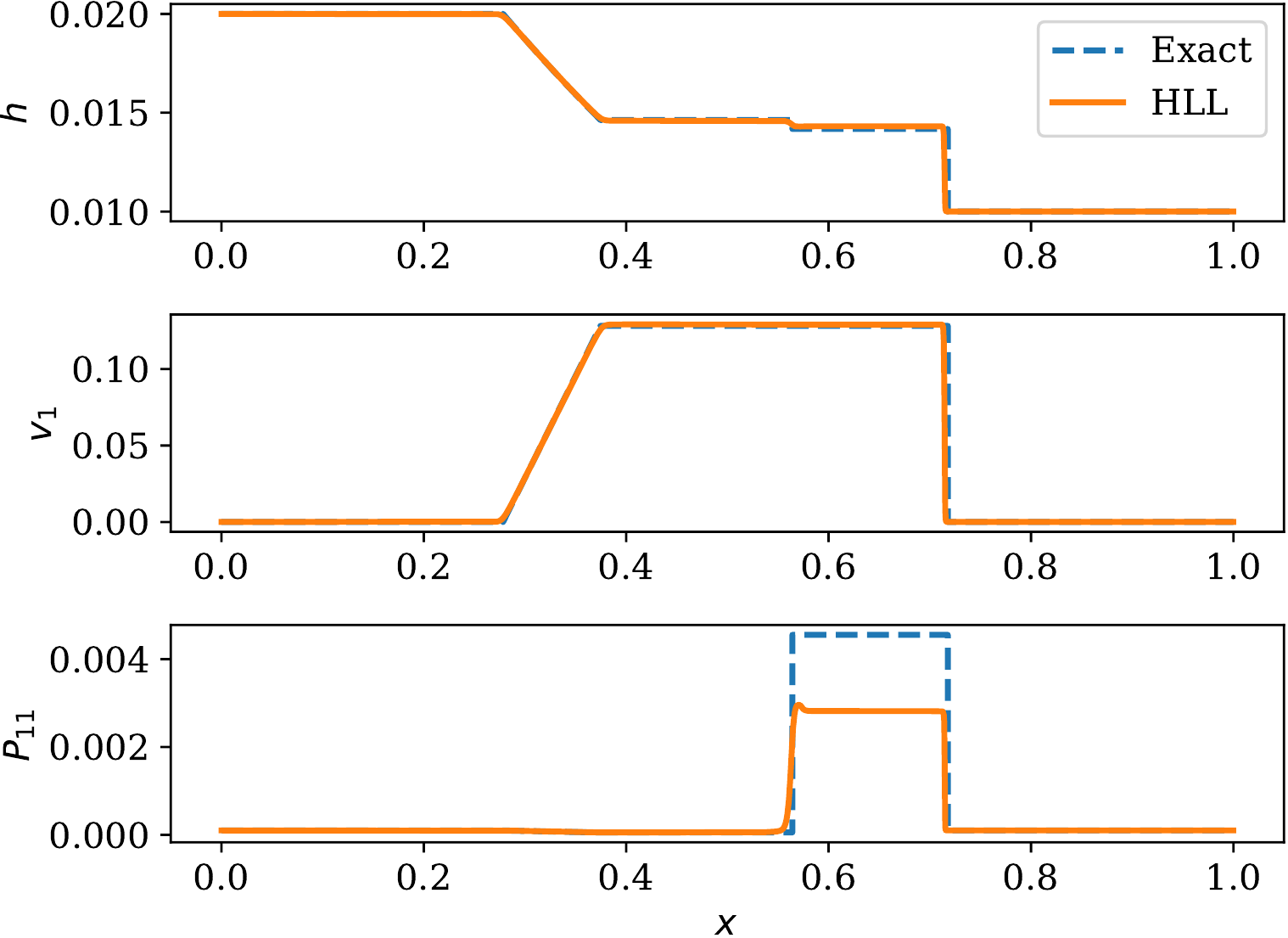}
\includegraphics[width=0.49\textwidth]{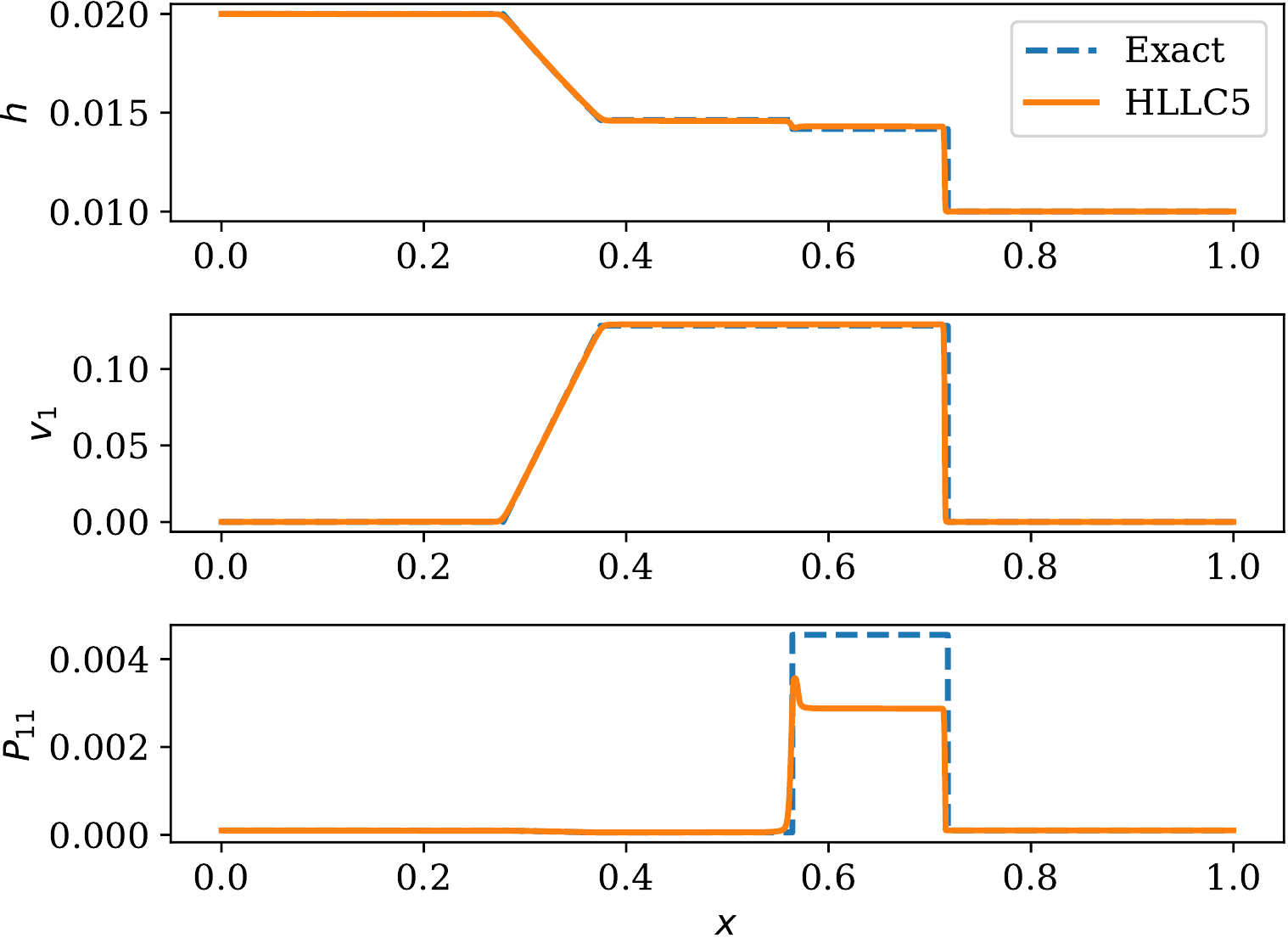}
\caption{Dam break test case with  2000 cells and second order approximations. Comparison between exact and numerical solutions obtained with HLL (left) and HLLC5 (right) schemes.}
\label{fig:DamBF}
\end{center}
\end{figure}
%-------------------------------------------------------------------------------
\begin{figure}
\begin{center}
\includegraphics[width=0.49\textwidth]{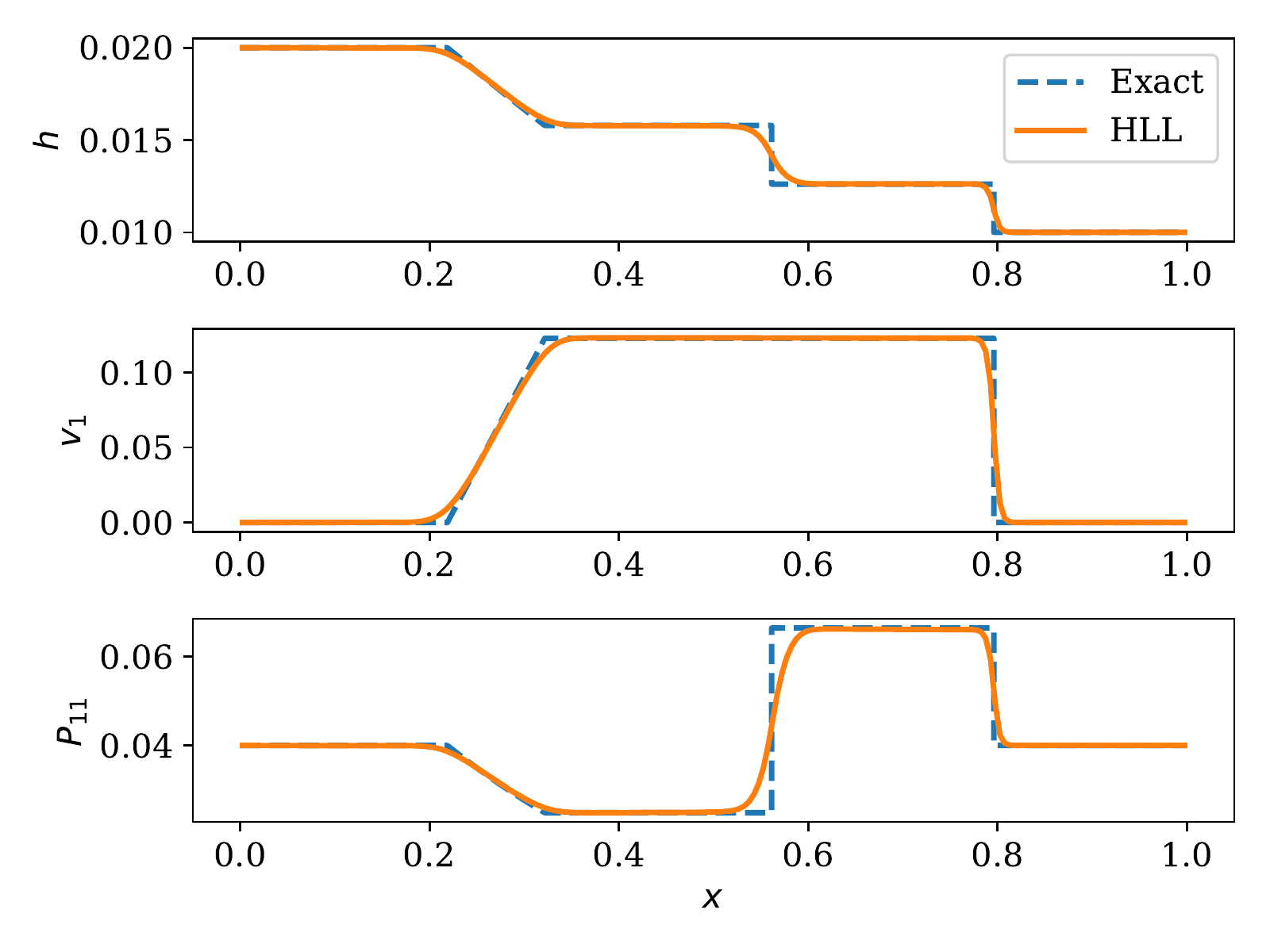}
\includegraphics[width=0.49\textwidth]{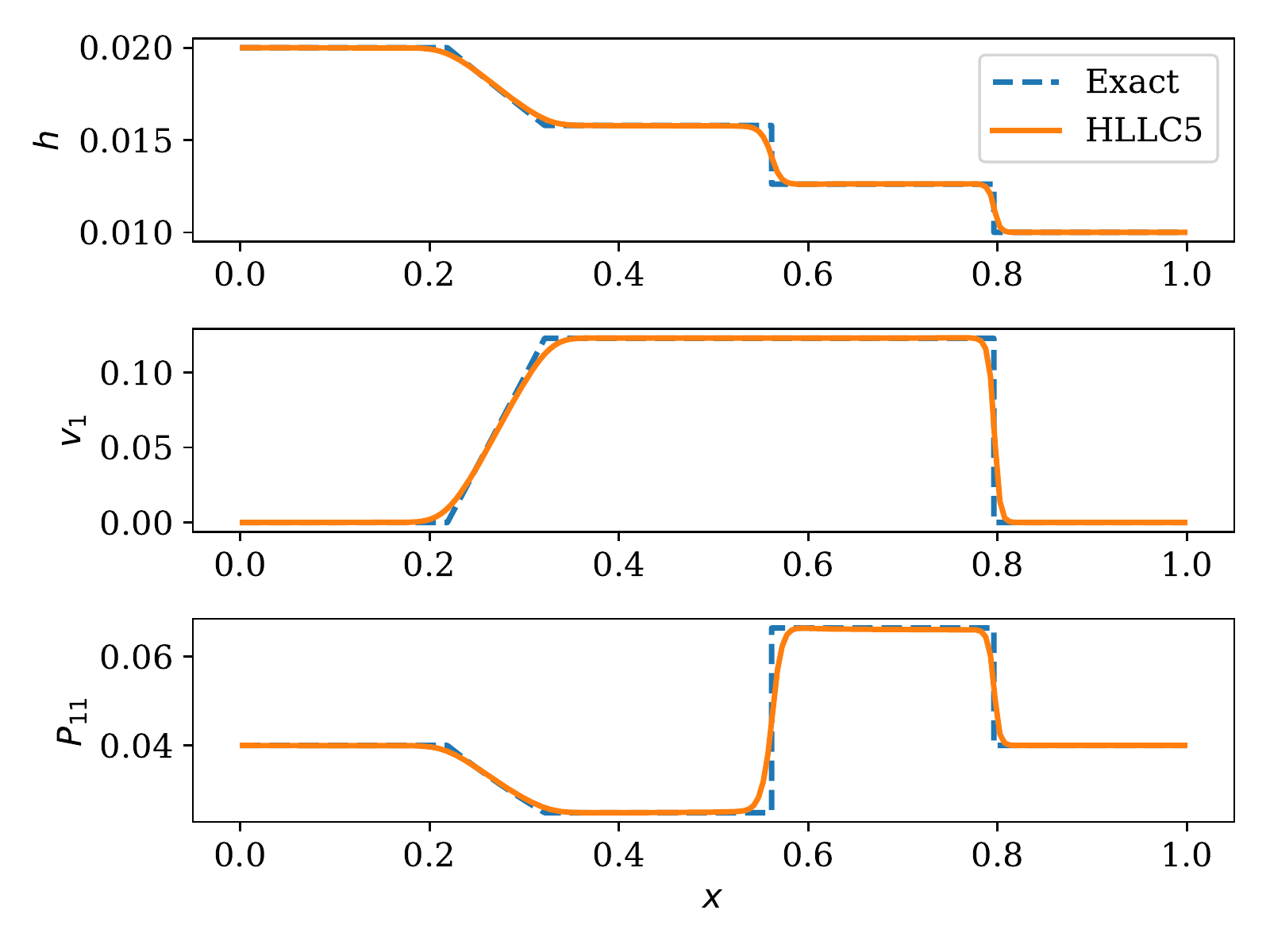}
\caption{Modified dam break  test case with 200 cells and second order approximations. Comparison between exact and numerical solutions obtained with HLL (left) and HLLC5 (right) schemes.}
\label{fig:DamBMM}
\end{center}
\end{figure}
%-------------------------------------------------------------------------------
\begin{figure}
\centering
\includegraphics[width=0.5\textwidth]{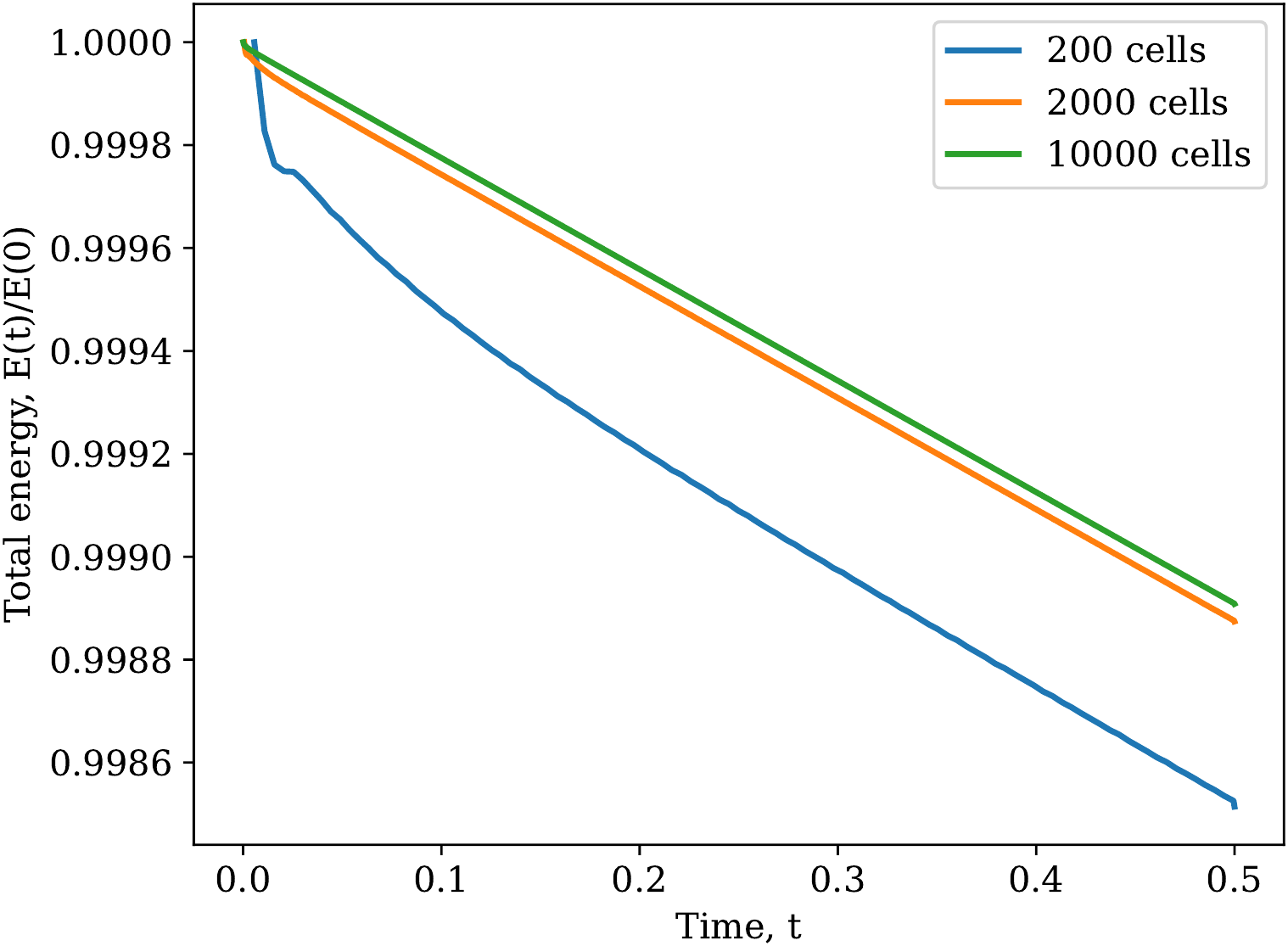}
\caption{Total energy in the domain as a function of time for dam break problem using HLLC5 scheme.}
\label{fig:totEdb}
\end{figure}
%-------------------------------------------------------------------------------
 \begin{figure}
\begin{center}
\includegraphics[width=0.49\textwidth]{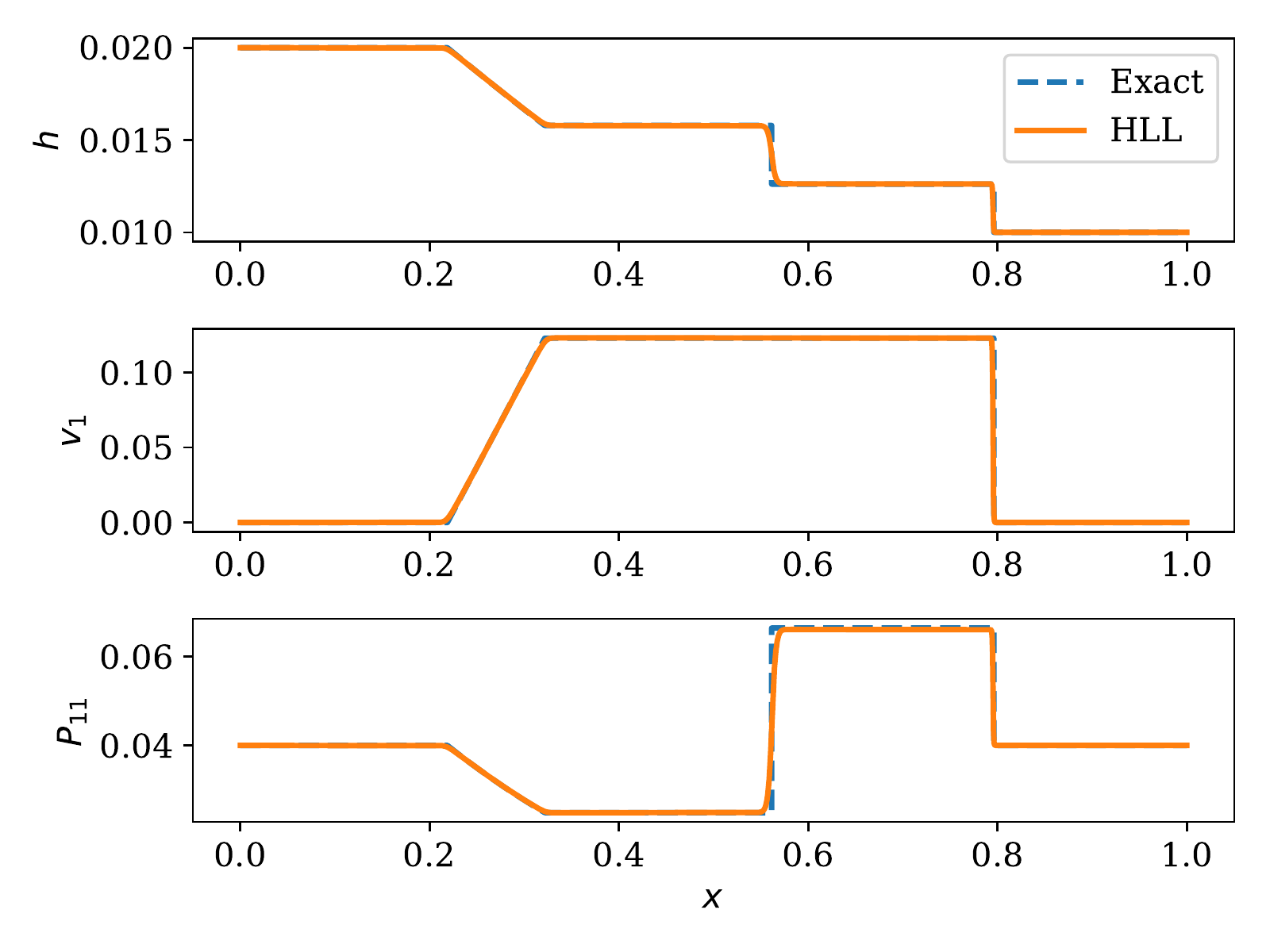}
\includegraphics[width=0.49\textwidth]{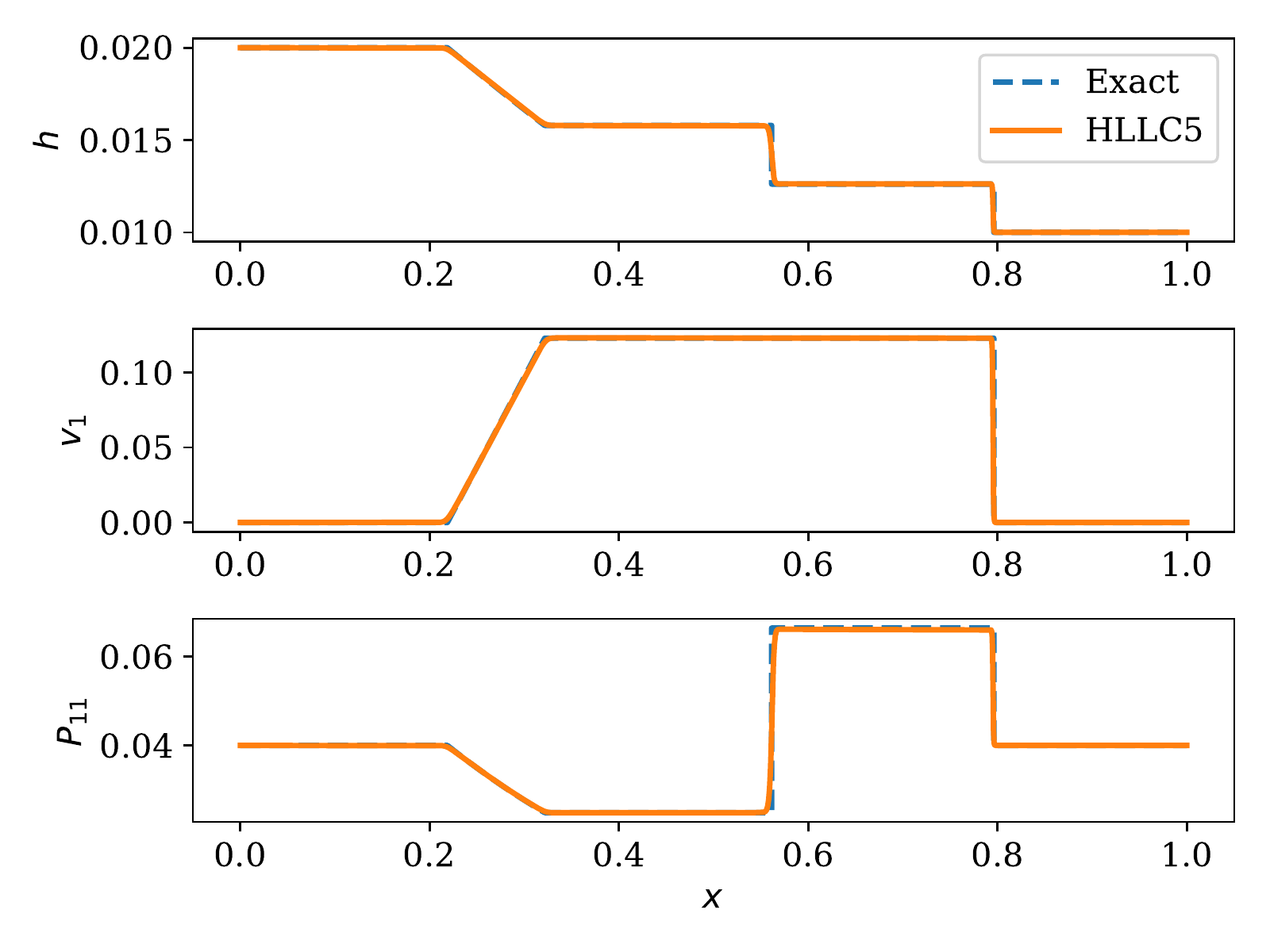}
\caption{Modified dam break test case with  2000 cells and second order approximations. Comparison between exact and numerical solutions obtained with HLL (left) and HLLC5 (right) schemes.}
\label{fig:DamBMMF}
\end{center}
\end{figure}

\begin{figure}
\begin{center}
\includegraphics[width=0.49\textwidth]{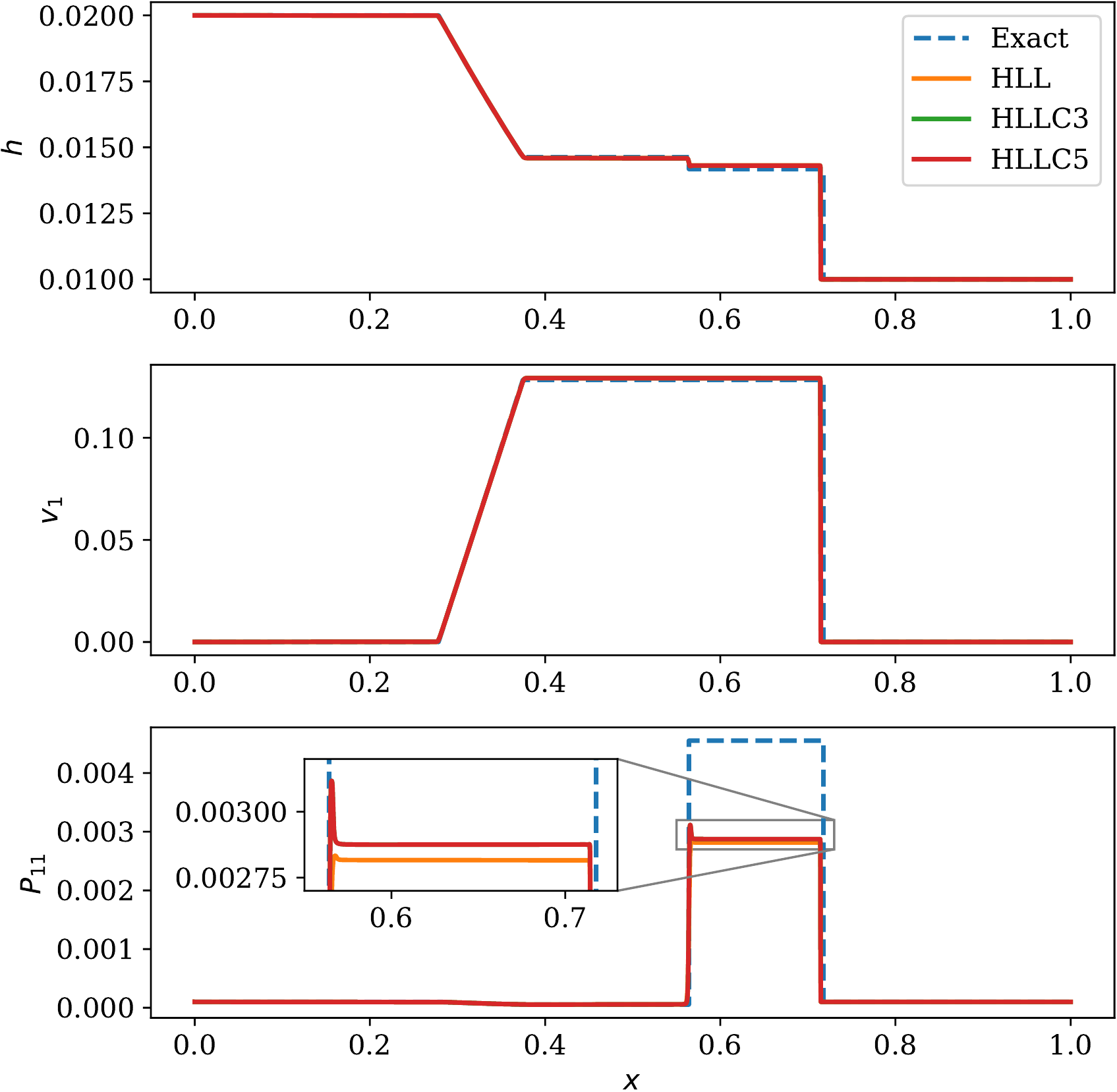}
\includegraphics[width=0.49\textwidth]{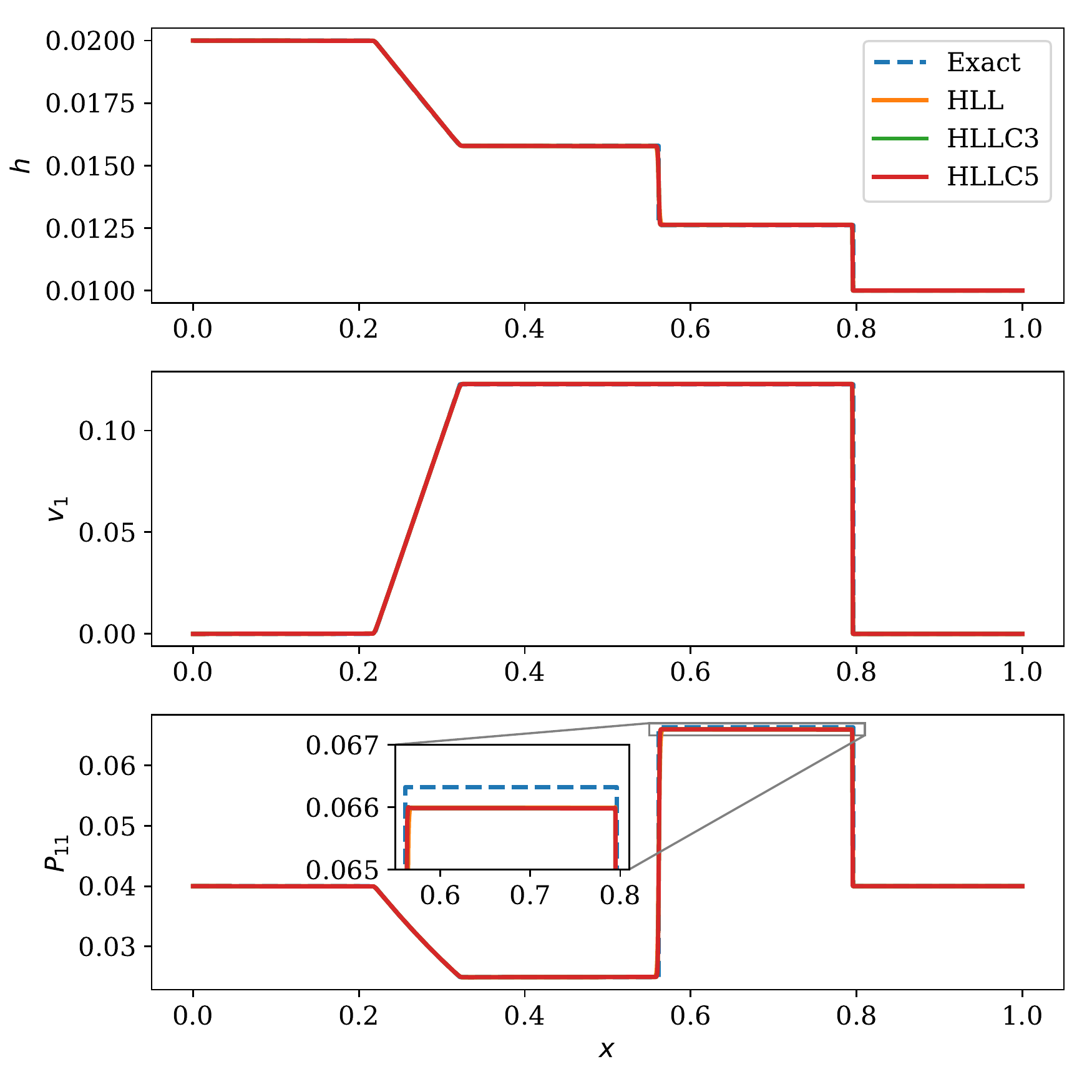}
\caption{Converged solutions for dam break (left) and  modified dam break (right) problems. Numerical solutions are shown with 10000 cells.}
\label{fig:CDamBMM}
\end{center}
\end{figure}

\begin{figure}
\centering
\includegraphics[width=0.9\textwidth]{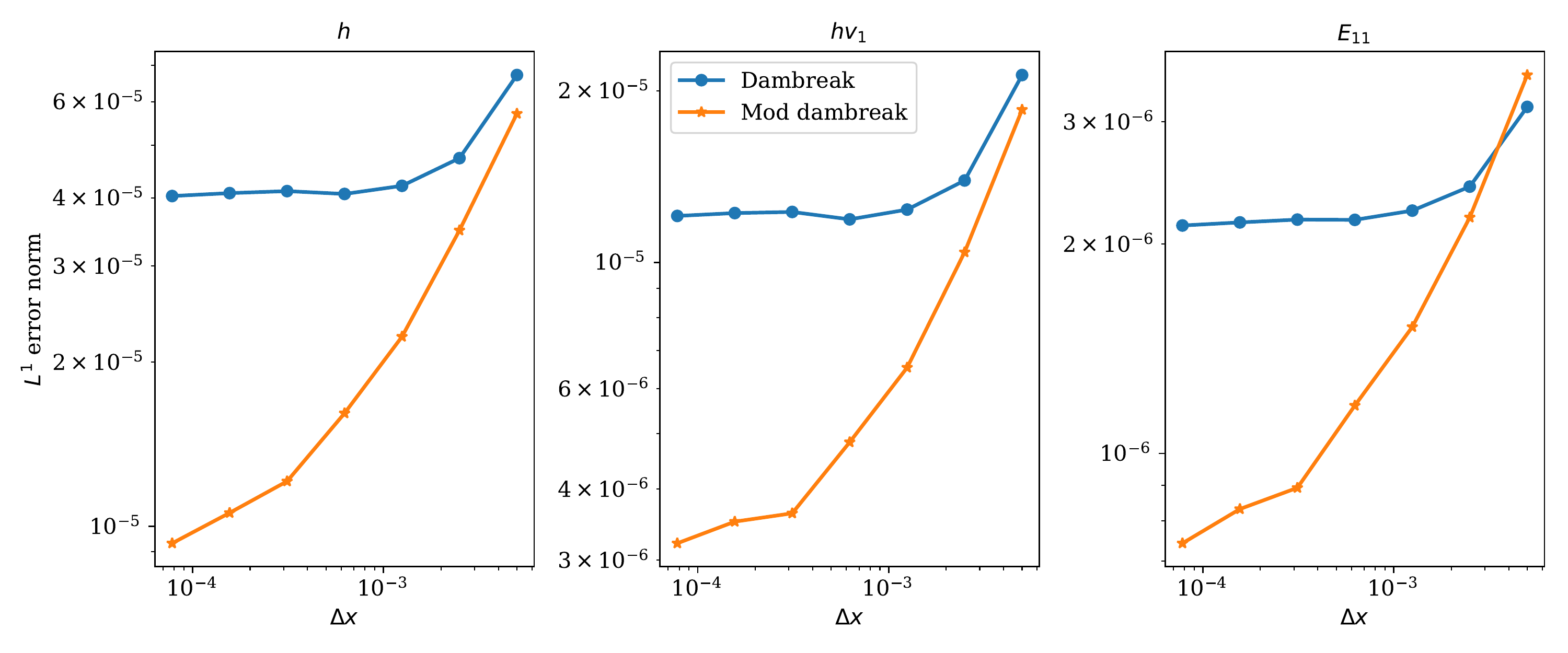}
\caption{Convergence of $L^1$ error norm for dambreak problems using HLLC5 scheme}
\label{fig:convdb}
\end{figure}
%-------------------------------------------------------------------------------
\begin{figure}
\begin{center}
\includegraphics[width=0.49\textwidth]{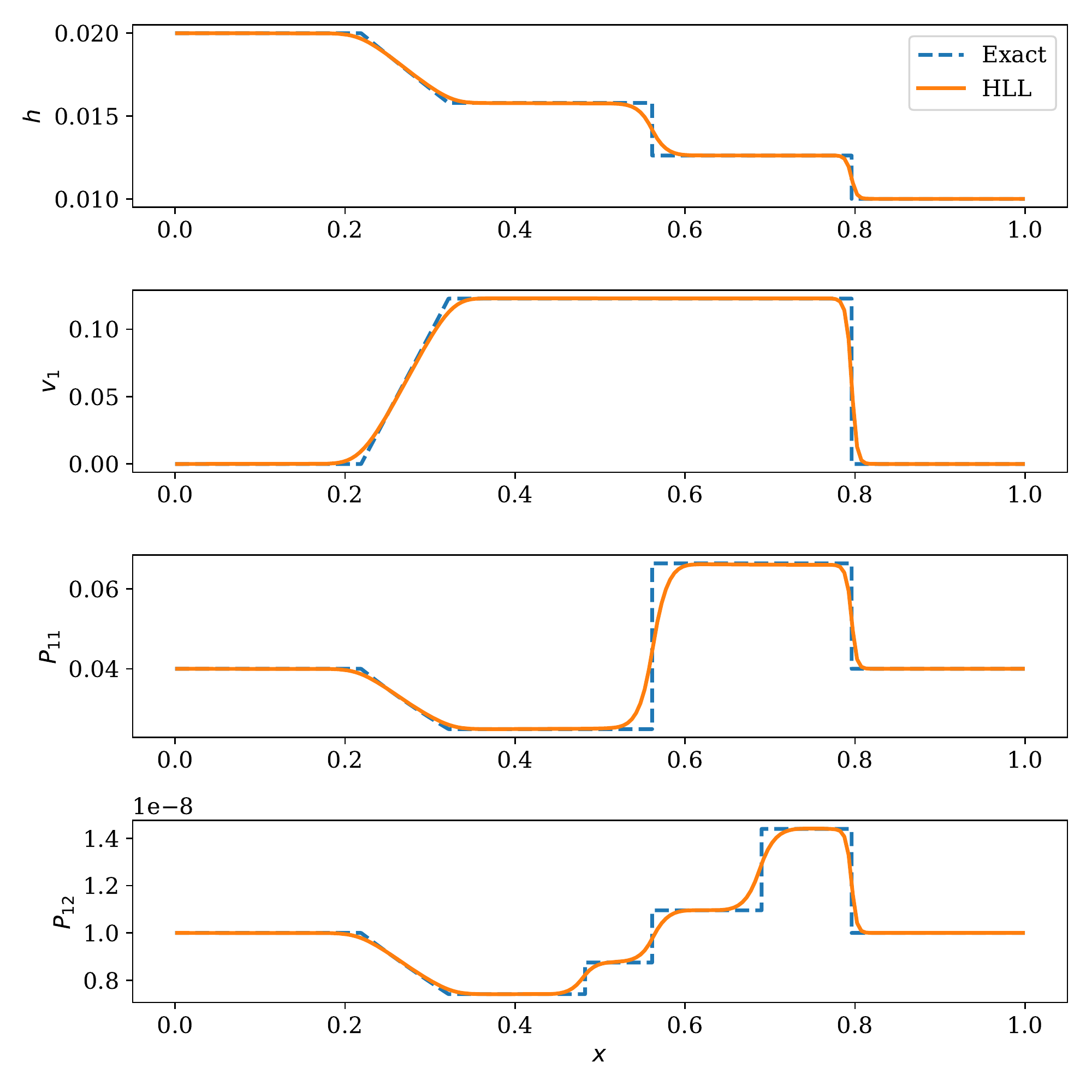}
\includegraphics[width=0.49\textwidth]{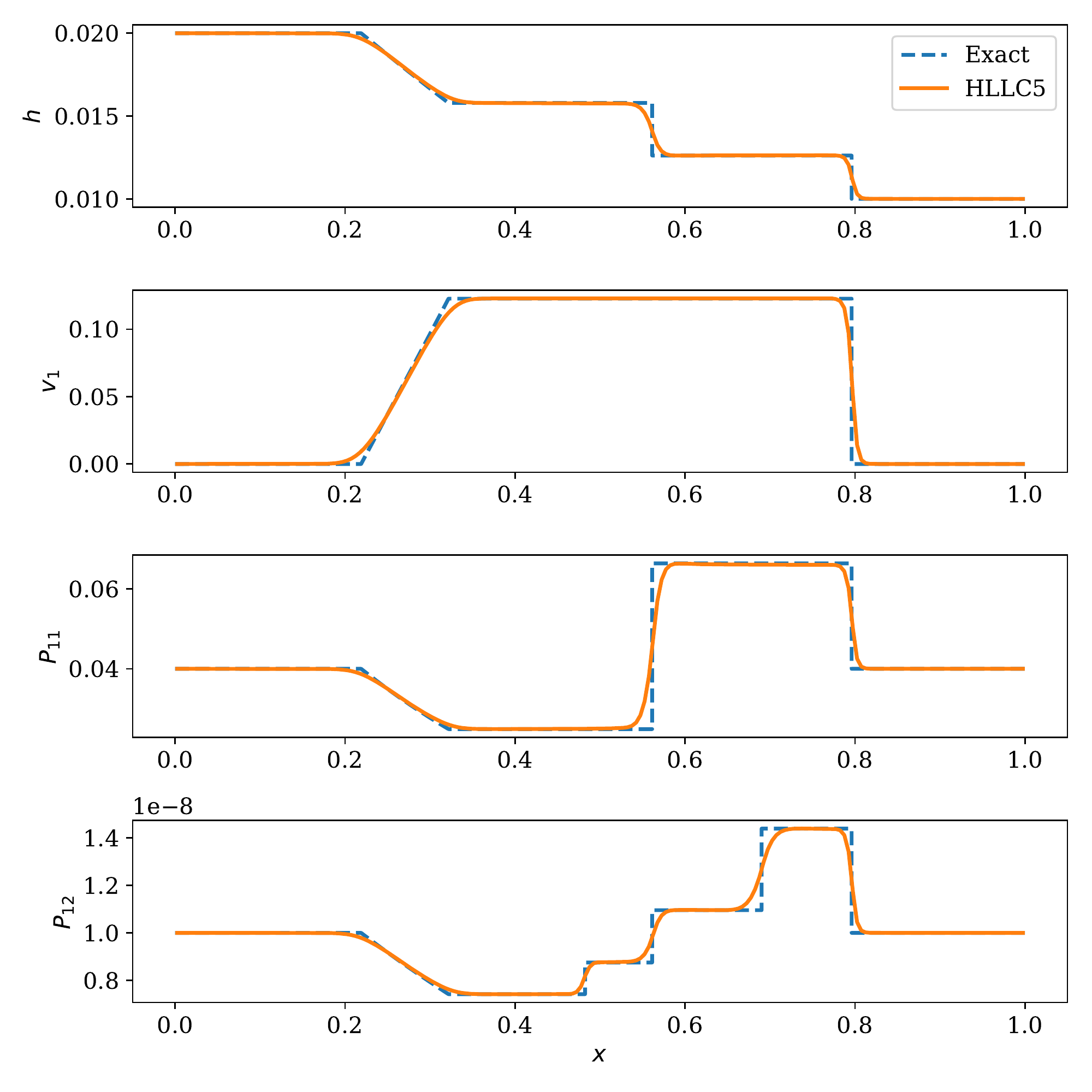}
\caption{Modified dam break with initially $\p_{12} = 10^{-8}$ in the
  entire domain. Mesh of 200 cells and second order
  approximations. Comparison between exact and numerical solutions
  obtained with HLL (left) and HLLC5 (right) schemes. }
\label{fig:DamBM}
\end{center}
\end{figure}
%----------------------------------------------------------------------------------
\subsection{Five waves dam break problem}
The initial condition for the Riemann problem is given in the following table,
%\begin{table}[h]
\begin{center}
\begin{tabular}{|c|c|c|c|c|c|c|}
\hline
 & $h$ & $v_1$ & $v_2$ & $\p_{11}$ & $\p_{12}$ & $\p_{22}$ \\
\hline
$x < 0.5$ & 0.01 & 0.1 & 0.2 & $4 \times 10^{-2}$ & $10^{-8}$ & $4 \times 10^{-2}$ \\
\hline
$x > 0.5$ & 0.02 & 0.1 & -0.2 &  $4 \times 10^{-2}$ & $10^{-8}$ & $4 \times 10^{-2}$ \\
\hline
\end{tabular}
\end{center}
%\caption{Riemann data for the five waves problem}
%\label{tab:five}
%\end{table}
The initial data is like a dam break problem but with some initial shear $v$ and a non-zero normal velocity $u$. The results are shown in Figure~\ref{fig:Five} and (\ref{fig:FiveF}) at time $t=0.5$ units. The solution shows five waves including 1-shock and 6-rarefaction wave. All the waves are captured by both Riemann solvers even on the coarse mesh of 200 cells. The numerical solution and the location of the waves agrees well with the exact solution, and the numerical results approach the exact solution on the finer mesh as seen in Figure~\ref{fig:FiveF}. The values of $\p$ used are larger as in the case of the modified dam break problem and this leads to good agreement between the numerical and exact solutions, which was observed in the previous dam break problem.
%-------------------------------------------------------------------------------
\begin{figure}
\begin{center}
\includegraphics[width=0.49\textwidth]{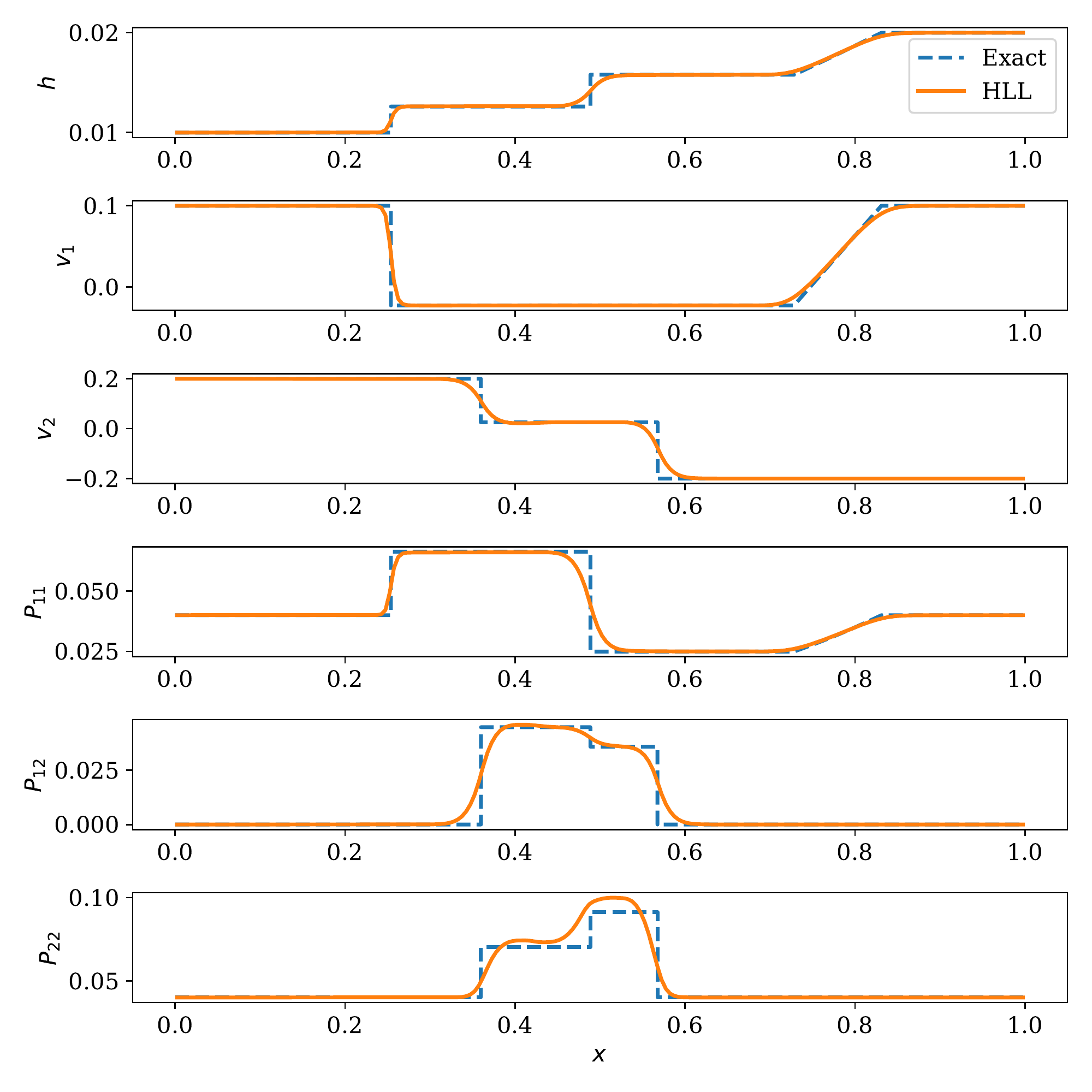}
\includegraphics[width=0.49\textwidth]{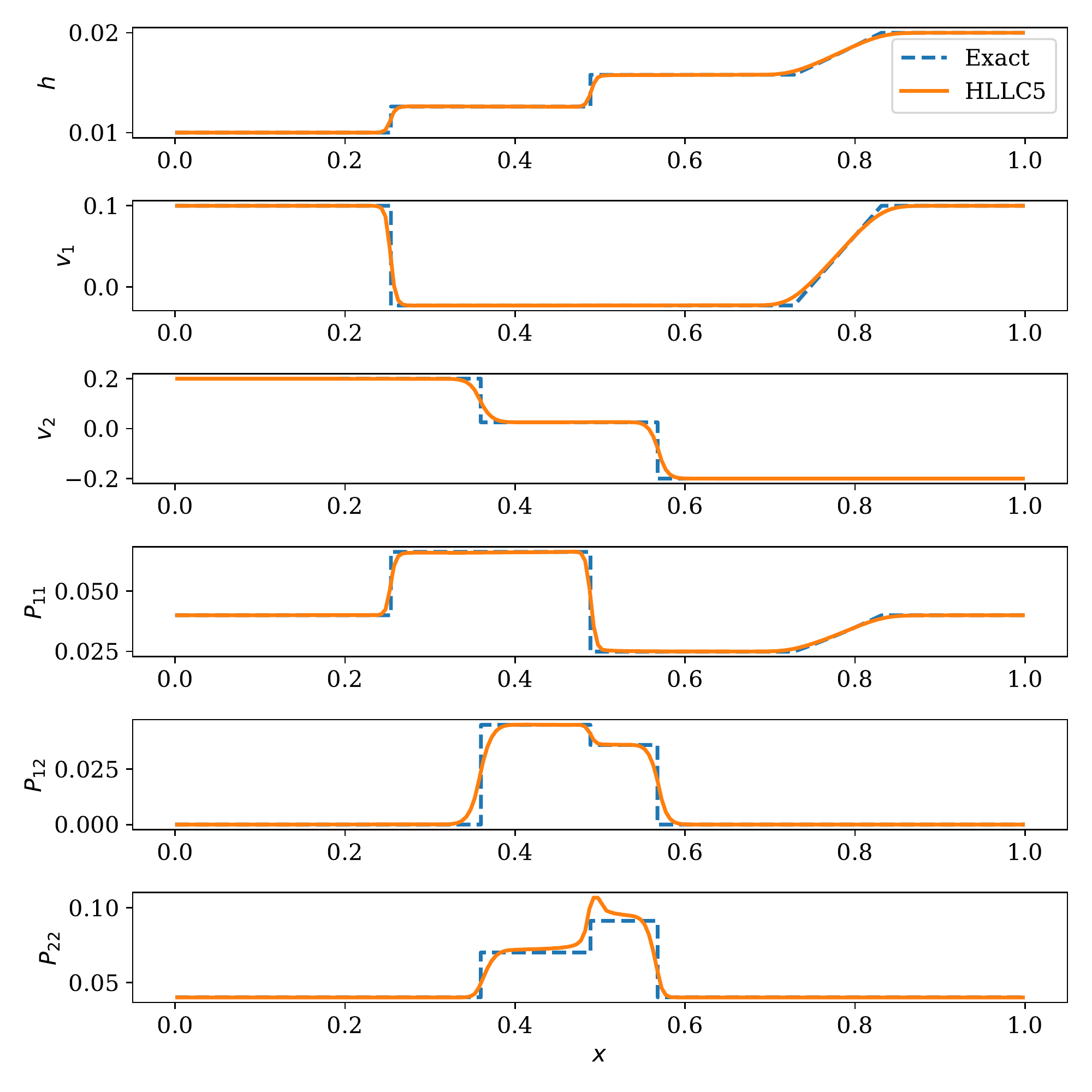}
\caption{Five waves  dam break  test case with 200 cells and second order approximations. Comparison between exact and numerical solutions obtained with HLL (left) and HLLC(right) schemes. }
\label{fig:Five}
\end{center}
\end{figure}
 %-------------------------------------------------------------------------------
 \begin{figure}
\begin{center}
\includegraphics[width=0.49\textwidth]{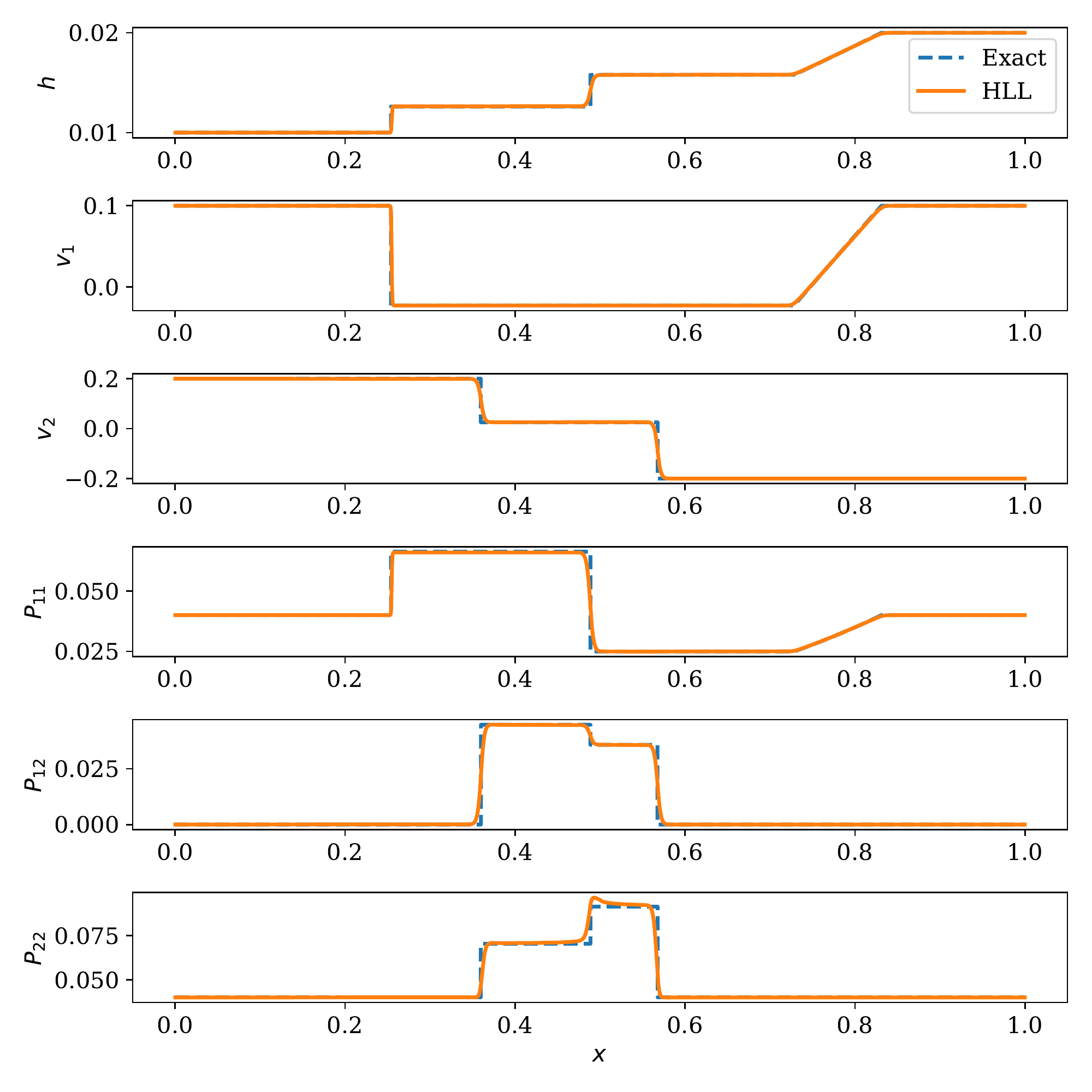}
\includegraphics[width=0.49\textwidth]{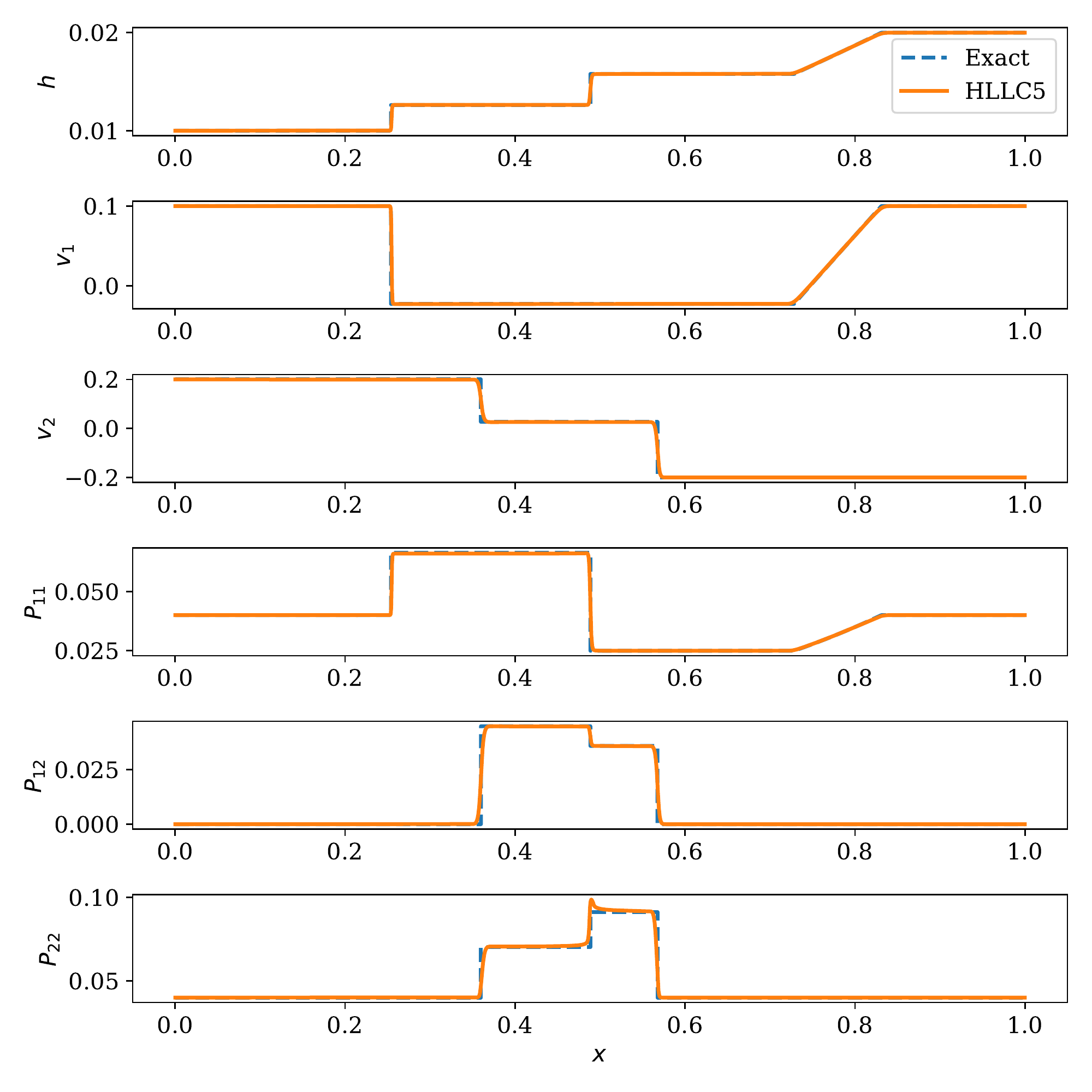}
\caption{Five waves dam break test case with  2000 cells and second order approximations. Comparison between exact and numerical solutions obtained with HLL (left) and HLLC(right) schemes. }
\label{fig:FiveF}
\end{center}
\end{figure}
 %-------------------------------------------------------------------------------
 \subsection{Shear waves problem}
The initial condition for the Riemann problem is given in the following table.
%\begin{table}[h]
\begin{center}
\begin{tabular}{|c|c|c|c|c|c|c|}
\hline
 & $h$ & $v_1$ & $v_2$ & $\p_{11}$ & $\p_{12}$ & $\p_{22}$ \\
\hline
$x < 0.5$ & 0.01 & 0.0 & 0.2 & $10^{-4}$ & $0.0$ & $10^{-4}$ \\
\hline
$x > 0.5$ & 0.01 & 0.0 & -0.2 &  $10^{-4}$ & $0.0$ & $10^{-4}$ \\
\hline
\end{tabular}
\end{center}
%\caption{Riemann data for the shear waves problem}
%\label{tab:shear}
%\end{table}
The result is shown in Figure~\ref{fig:Shear} at time $t=10$ on a mesh of 200 cells, where we see two shear waves in the solution. The numerical solution including the location
of the waves agrees well with the exact solution. The HLLC5 solver
gives a better resolution of the shear waves since they are included
in the approximate wave model. However, there are spurious spikes
found at the center in $\p_{22}$ where there is a stationary contact
discontinuity. This behavior is similar to what is usually observed with
numerical solution of some Riemann problems for the compressible Euler flows.
%-------------------------------------------------------------------------------
 \begin{figure}
\begin{center}
\includegraphics[width=0.49\textwidth]{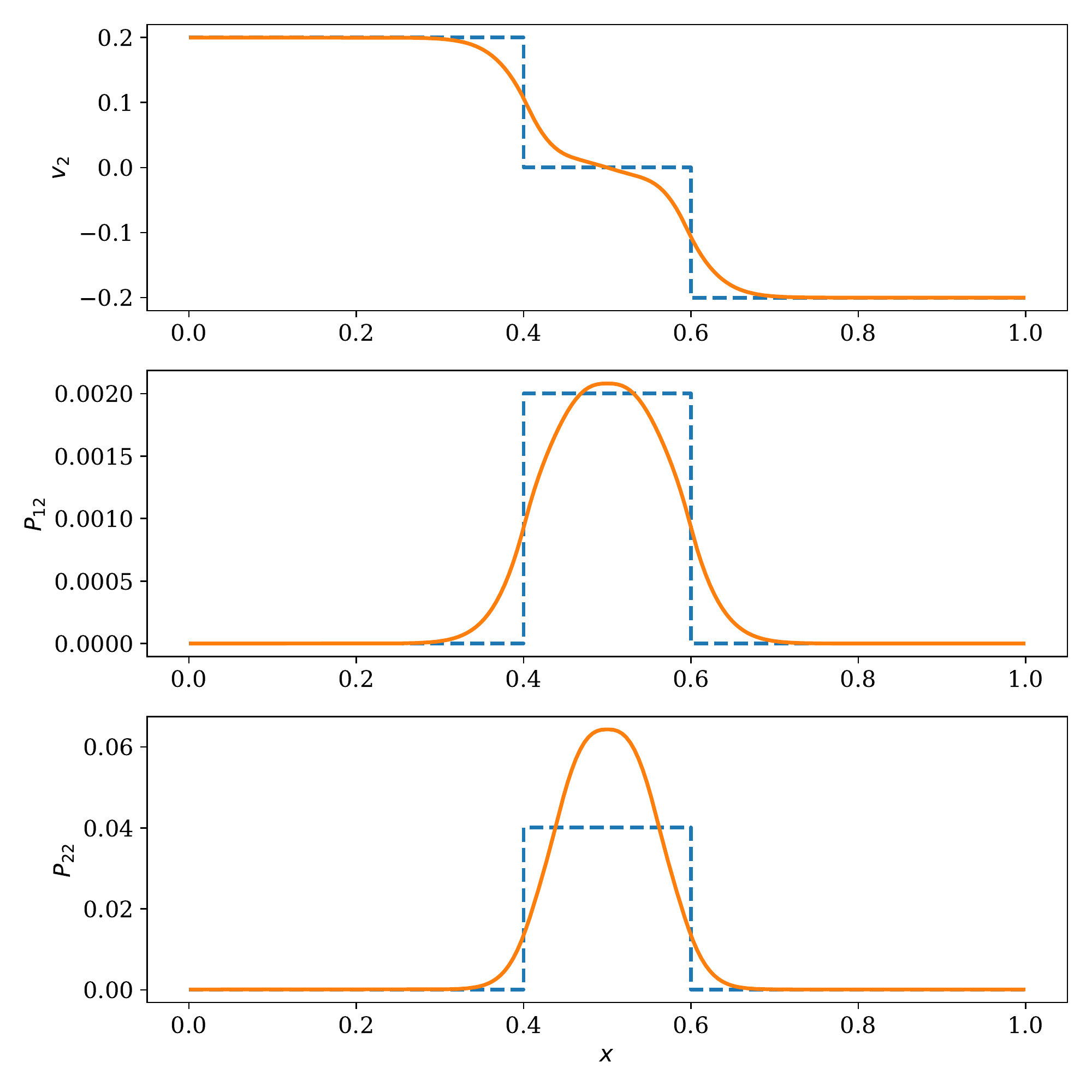}
\includegraphics[width=0.49\textwidth]{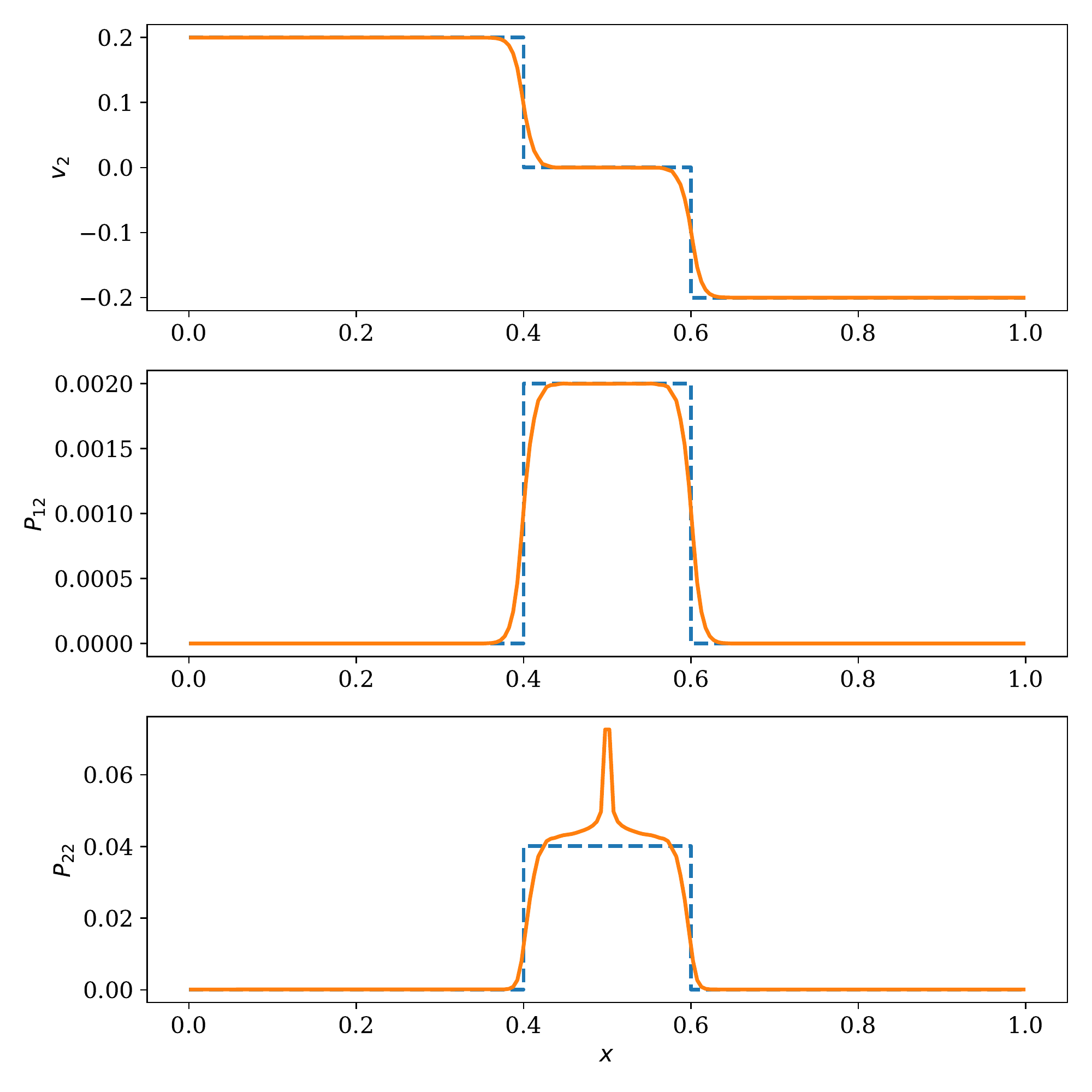}
\caption{Shear test case  with 200 cells and second order
  approximations. Comparison between exact and numerical solutions
  obtained with HLL (left) and HLLC5 (right) schemes. }
\label{fig:Shear}
\end{center}
\end{figure}
 %-------------------------------------------------------------------------------
 \subsection{Single shock wave problem}
In this test case, we use a Riemann data for which the exact solution
consists of a single shock wave, as described in
Section~\ref{sec:singshock}. The initial condition is given by
\begin{center}
\begin{tabular}{|c|c|c|c|c|c|c|}
\hline
 & $h$ & $v_1$ & $v_2$ & $\p_{11}$ & $\p_{12}$ & $\p_{22}$ \\
\hline
$x < 0.5$ & 0.02 & 0 & 0 & $10^{-4}$ & 0 & $10^{-4}$ \\
\hline
$x > 0.5$ & 0.03 & -0.22169799277395363 & 0 & 0.016616666666666658 & 0 & $10^{-4}$ \\
\hline
\end{tabular}
\end{center}
Figure~\ref{fig:singshock} shows the numerical solution obtained with the
HLLC5 solver on a mesh of 2000 cells. While the shock location matches
closely, we see that the numerical solutions exhibit an extra contact
wave which is not present in the exact solution. All solvers exhibit
this behavior and this is seen even under grid refinement. This
situation is similar to the dambreak problem where the solution of
$\p_{11}$ does not agree with the exact solution.

We next consider the same problem but solve it in a frame where the exact shock is stationary. The corresponding Riemann data is given by
\begin{center}
\begin{tabular}{|c|c|c|c|c|c|c|}
\hline
 & $h$ & $v_1$ & $v_2$ & $\p_{11}$ & $\p_{12}$ & $\p_{22}$ \\
\hline
$x < 0.5$ & 0.02 & 0.6650939783218609 & 0 & $10^{-4}$ & 0 & $10^{-4}$ \\
\hline
$x > 0.5$ & 0.03 & 0.44339598554790727  & 0 & 0.016616666666666658 & 0 & $10^{-4}$ \\
\hline
\end{tabular}
\end{center}
We solve this problem using the speed estimates given in~\eqref{eq:speedapp} and also using the exact speeds obtained from the exact Riemann solver. Figure~\ref{fig:singshock0} shows the two sets of results on a mesh of 2000 cells; with the approximate speeds, we see a similar wave pattern as in the moving shock case, but there are many dispersive waves seen between the shock and the contact, as seen in the bottom figure which shows a zoomed view of $\p_{11}$. When the exact speeds are used, as shown in the right of Figure~\ref{fig:singshock0}, we see a better agreement with the exact solution but there are still some extra waves present in the numerical solution. Figure~\ref{fig:singshock0b} shows the results obtained with a refined mesh of 10000 cells. The numerical solver based on approximated wave speeds behaves almost as a dispersive shock that is usually associated to modulated wave-train.  It seems that, as soon as the shock cannot be numerically resolved without any dissipation, the numerical solution can be different from the analytical one. In other words, the shocks obtained with dissipative numerical schemes and those obtained analytically with the same generalized jump conditions, do not perfectly coincide.  This problem of convergence failure has been analyzed in \cite{Castro2008a} using the modified equation.  It was shown that this non-intuitive behavior is due to numerical viscosity and/or numerical dispersion.  Therefore, as far as the numerical scheme involves some dissipation, they will converge to a solution that depend, not only on the chosen path family, but also and especially on the specific form of its dissipation terms, whereas the analytical solution will be determined only by the choice of the path family. This discrepancy between the numerical and analytical solutions is one of the peculiarities of non-conservative systems. The results we obtain here, plotted for example on Figures~\ref{fig:CDamBMM}, \ref{fig:singshock0}, \ref{fig:singshock0b} and \ref{fig:singshock1}, support the overall trend described in~\cite{Castro2008a}. The two first order results obtained with approximate and exact wave speeds almost coincide in Figure~\ref{fig:singshock1} and we cannot visually distinguish them. However, contrary to the first order accurate scheme in Figure~\ref{fig:singshock1}, when the second order method is used in Figures~\ref{fig:singshock0}, \eqref{fig:singshock0b}  and \eqref{fig:singshock1}, we can observe on the variable $\p_{11}$ a wave train, going to the right, generated at the location of the stationary shock. The structure of this wave train is different depending on whether the wave velocities used in the Riemann solver are exact or approximate. This suggests that, in this context, the numerical diffusion becomes residual and we probably observe here a behavior specific to numerical schemes whose modified equations are dominated by dispersion \cite{Gavrilyuk2020}.  This trend will be analyzed and quantified in future work.

\begin{comment}
  It seems that, as soon as the
  shock cannot be numerically resolved without interior points, the
  numerical path is always different to the analytical path. In other
  words, the shocks obtained with numerical schemes and those obtained
  analytically with the same generalized jump conditions, do not
  perfectly coincide. This is probably due to the fact that the
  numerical models introduce an additional approximation of the path
  through which the jump relations are generalized.  Since in the
  context of non-conservative equations the states connected by a
  shock depend on the path used to associate the states, it will be
  difficult to perfectly reconcile the analytical (when they exist)
  and numerical solutions. Even with a stationary shock, we reach the
  same conclusion where now the numerical solution seems to be a
  superposition of the exact solution with Dirac functions propagating
  at finite speed. Therefore, we can also reinterpret the numerical
  solution obtained for a moving shock as a superposition of the exact
  solution with a continuous train of Dirac waves. The presence of
  wave train indicates perhaps that the numerical approximation of this
  non conservative shock introduces a residual numerical dispersion
  that dominate the local dissipation. Mathematical investigation of
  this behavior is out of the scope of the present paper.
\end{comment}

\begin{figure}
\begin{center}
\includegraphics[width=0.49\textwidth]{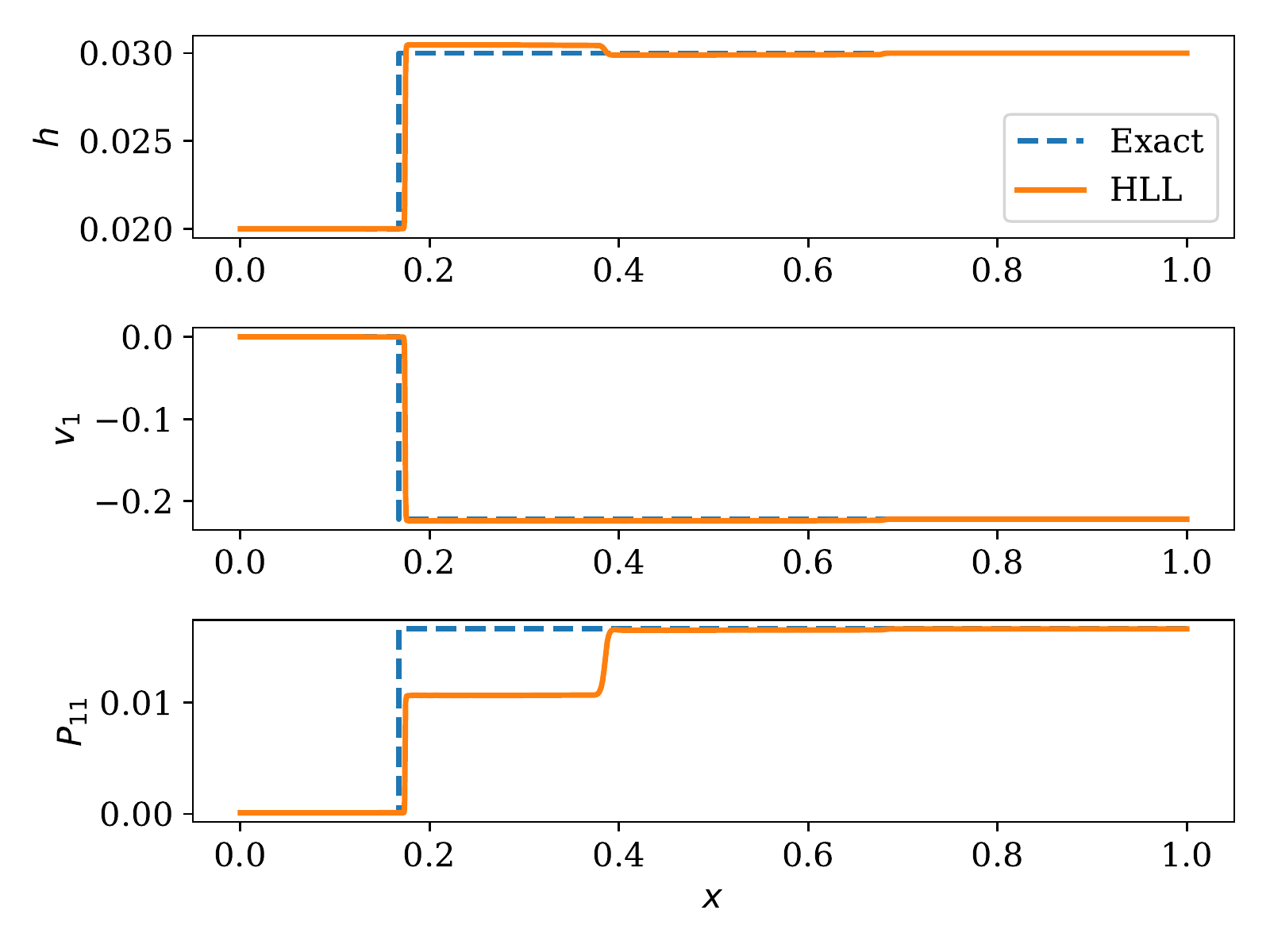}
\includegraphics[width=0.49\textwidth]{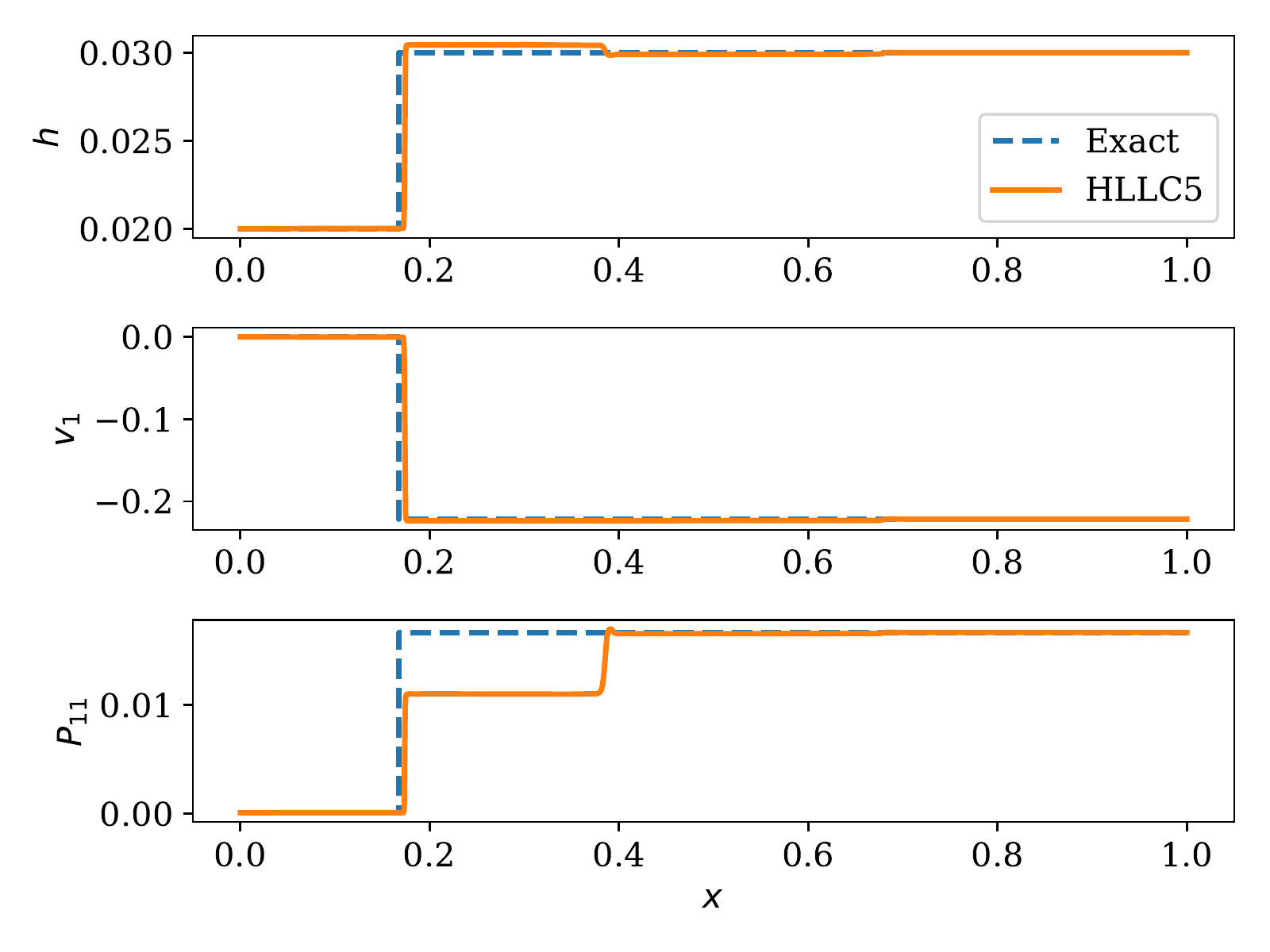}
\end{center}
\caption{Single moving shock test case on 2000 cells}
\label{fig:singshock}
\end{figure}

\begin{figure}
\begin{center}
\includegraphics[width=0.49\textwidth]{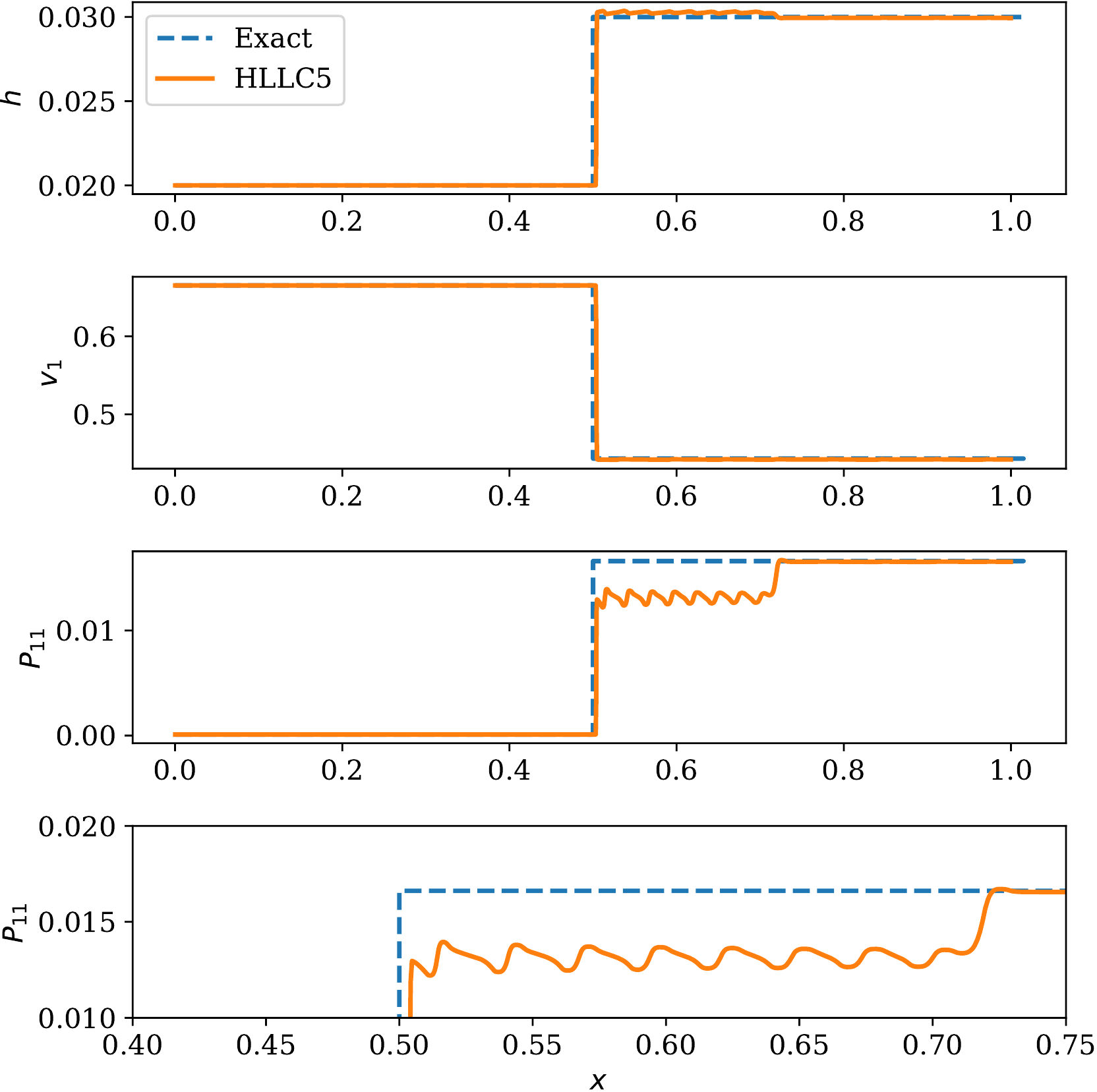}
\includegraphics[width=0.49\textwidth]{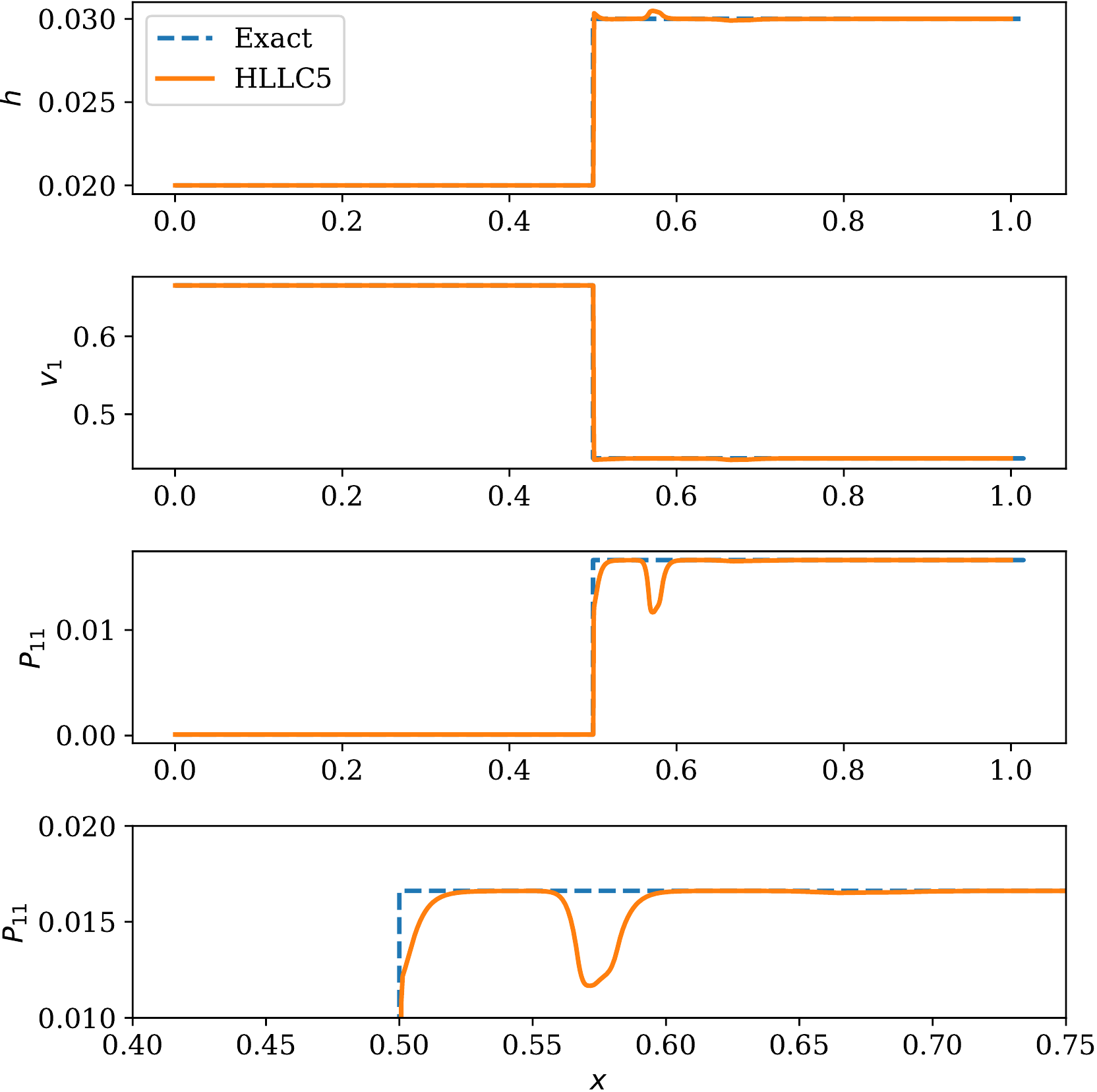}
\end{center}
\caption{Single stationary shock test case on 2000 cells using approximate speed (left) and exact speeds (right).}
\label{fig:singshock0}
\end{figure}

\begin{figure}
\begin{center}
\includegraphics[width=0.49\textwidth]{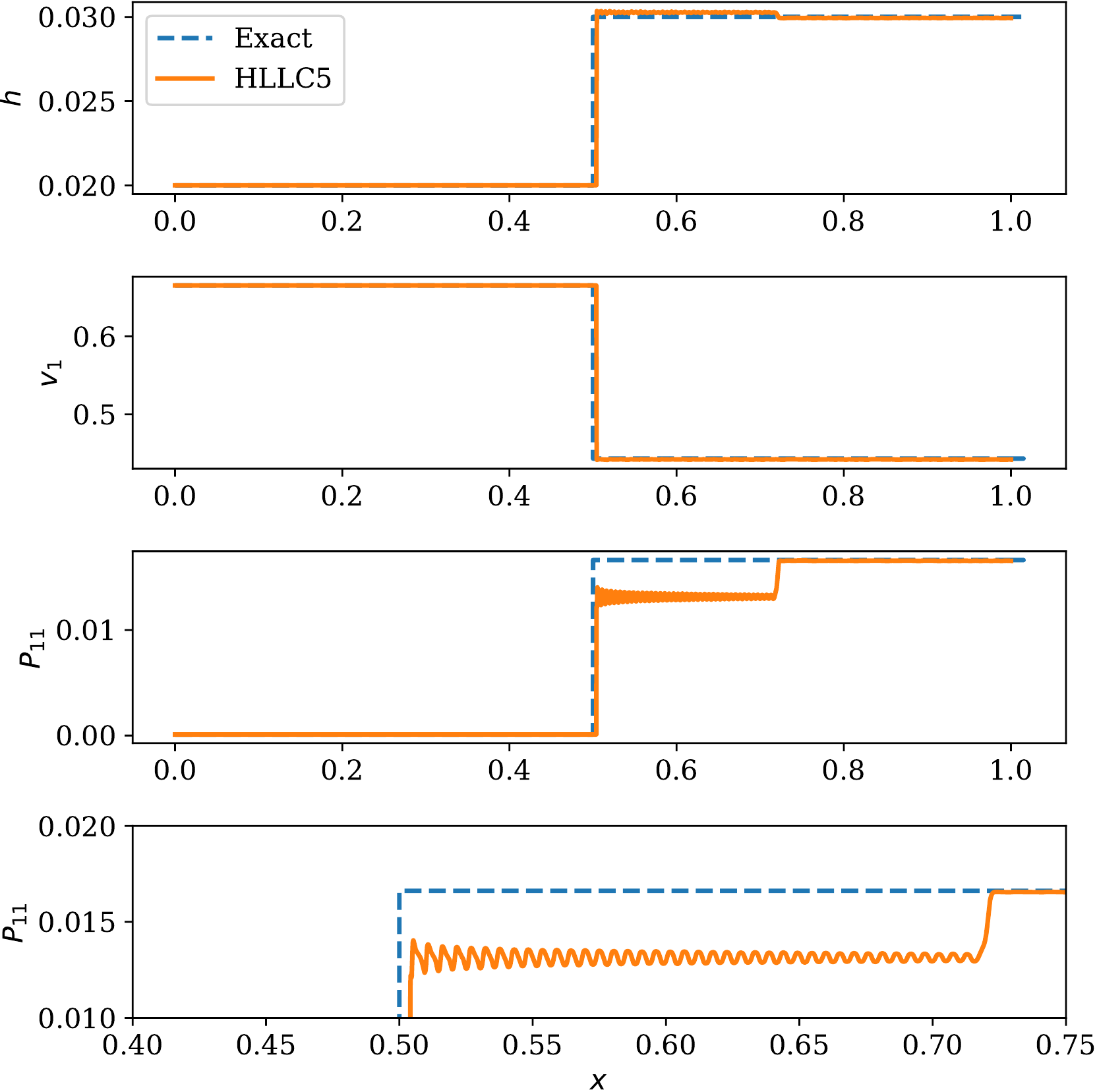}
\includegraphics[width=0.49\textwidth]{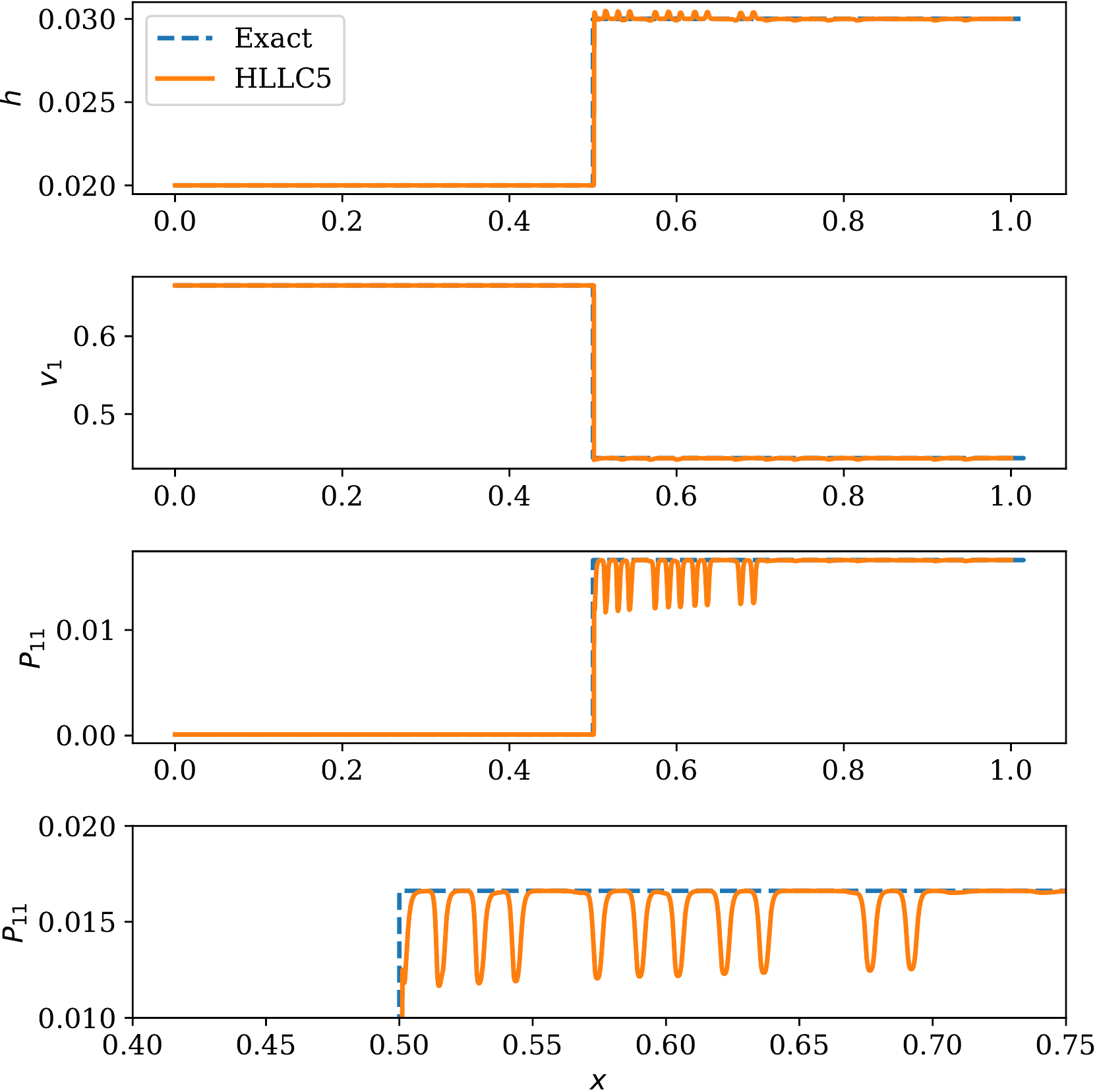}
\end{center}
\caption{Single stationary shock test case on 10000 cells using approximate speed (left) and exact speeds (right).}
\label{fig:singshock0b}
\end{figure}

\begin{figure}
\centering
\includegraphics[width=0.70\textwidth]{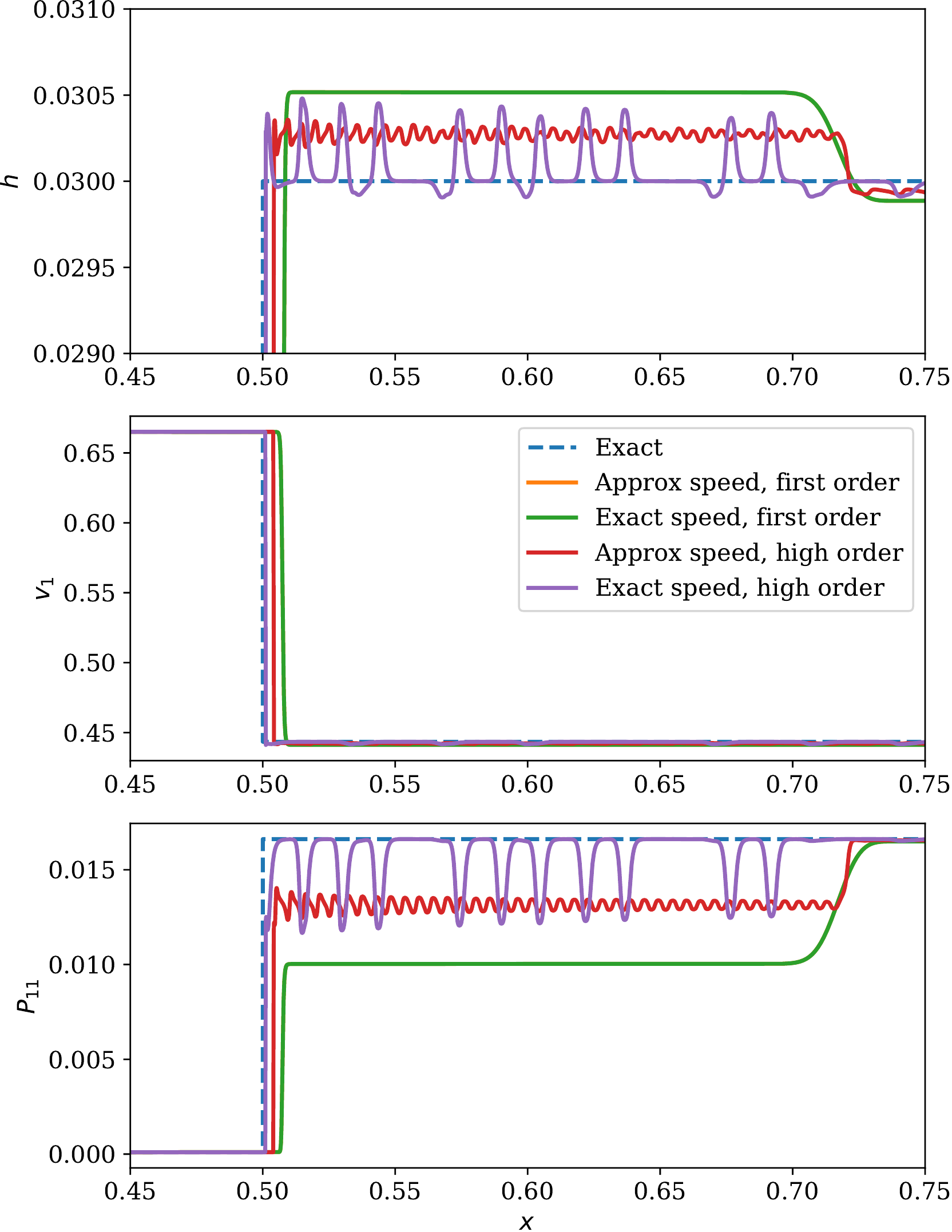}
\caption{Single stationary shock test case using 10000 cells; first and second order schemes using approximate and exact speeds.}
\label{fig:singshock1}
\end{figure}
%-------------------------------------------------------------------------------
\subsection{Single contact wave problem}
The Riemann data for this problem is given by
\begin{center}
\begin{tabular}{|c|c|c|c|c|c|c|}
\hline
 & $h$ & $v_1$ & $v_2$ & $\p_{11}$ & $\p_{12}$ & $\p_{22}$ \\
\hline
$x < 0.5$ & 0.02 & 0.1 & 0 & $10^{-4}$ & 0 & $10^{-4}$ \\
\hline
$x > 0.5$ & 0.01 & 0.1 & 0 & 0.014735 & 0 & $2 \times 10^{-4}$ \\
\hline
\end{tabular}
\end{center}
which gives rise to a single contact wave in the exact solution. Since
the water depth $h$ has a jump, the non-conservative terms are
non-zero in this case.  Figure~\ref{fig:singcont} shows the solution
at time $t=2.5$ obtained using the three approximate Riemann
solvers on a mesh of 2000 cells. The solution and the location of the
contact wave is captured well by all the numerical schemes. The HLL
solver introduces more numerical dissipation since it does not
explicitly model the contact wave, while both HLLC3 and HLLC5 solvers
include this wave in their model and give very similar
results. Contrary to the simple shock case, the numerical and
analytical contact discontinuity coincide perfectly, despite the
numerical approximation of the path (several points in the
numerical discontinuity). Indeed, the contact discontinuity is a
linearly degenerate wave and its associated states are defined
by the Riemann invariants that we have obtained explicitly and
independently of the path.
\begin{figure}
\begin{center}
\includegraphics[width=0.5\textwidth]{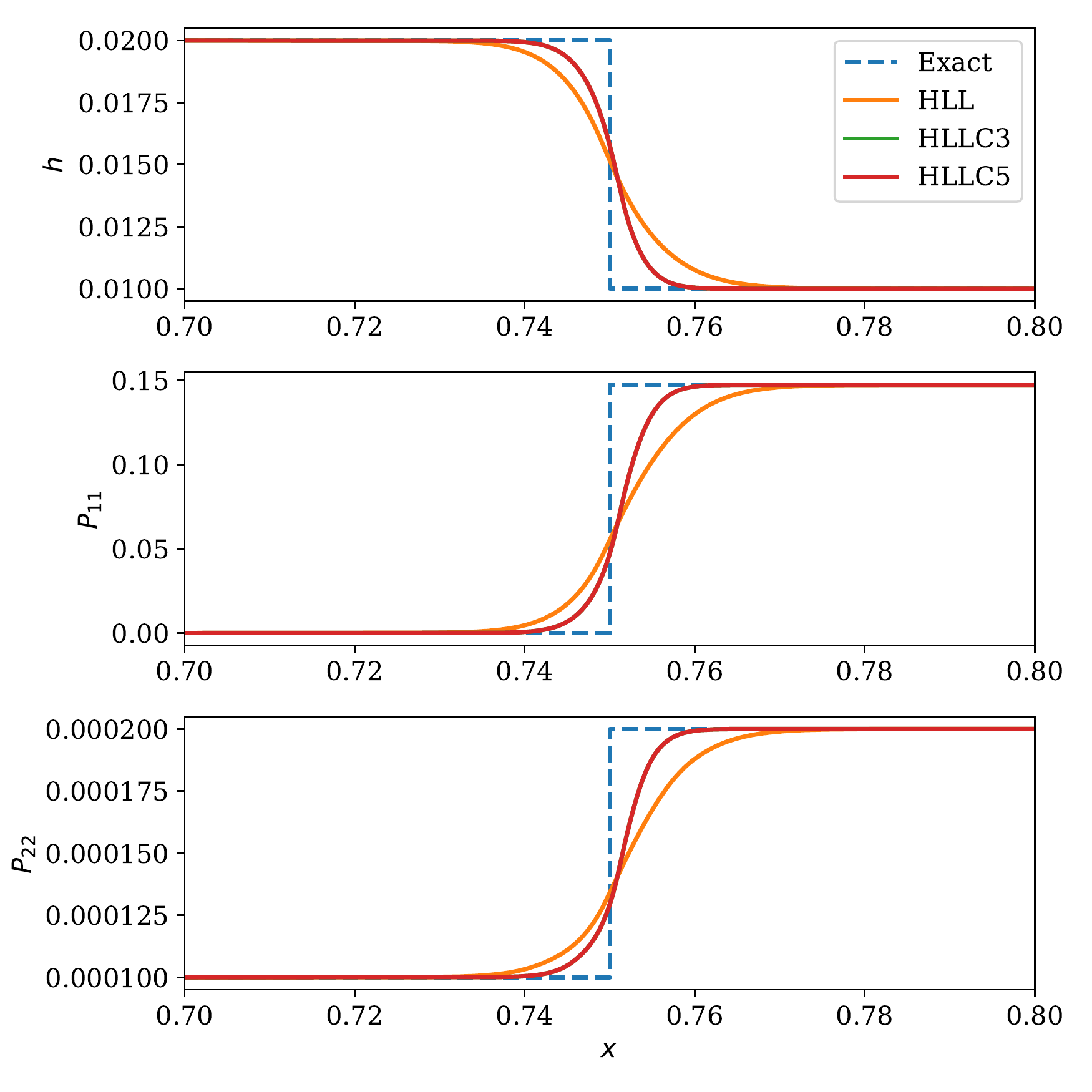}
\end{center}
\caption{Single contact test case on 2000 cells. Comparison of three different Riemann solvers with the exact solution.}
\label{fig:singcont}
\end{figure}
%-------------------------------------------------------------------------------
\section{Summary and conclusions}
We have derived the exact solution of the Riemann problem for the
non-conservative model of shear shallow water flows. The PDE is written in an almost conservative form that is very close to the 10-moment model for gas dynamics and admits a convex entropy function. The notion of
solution is based on path conservative approach for which a path has
to be assumed. In the numerical approaches, a linear path in the space
of conserved variables is usually assumed and we derive the exact
solution for this linear path. Several test cases are given and the
numerical results are compared with the exact solution. In some
problems as in dam break case, we see that the numerical solution of
$\p$ does not agree with the exact solution, though wave locations are
predicted correctly. When the stress levels $\p$ are not too small,
the agreement is much better as seen in the modified dam break
problem. In the case of single shock problem, the numerical solutions
produce an extra contact wave; the solutions depend sensitively on the
choice of the speed estimates used in the HLL solvers. When the exact
speeds are used for the stationary shock problem, there is better
agreement but we observe several other waves which may indicate that
the numerical strategies in this context are, locally around the shocks, dominated by
 dispersion rather than dissipation.

We must also remember that the exact solution depends on the choice of
the path and even when the path is fixed, the numerical shocks my be
different to the exact one. Nevertheless, apart from the shocks
profile for which significant differences are observed, for all other
waves (contact discontinuity, shear waves and rarefaction waves) the
numerical solution converges well to the exact solution. The
difficulties observed for the shock waves raise the problem of the
stability of the shocks in the framework of non-conservative hyperbolic
equations, for a given path (approximated Rankine Hugoniot
conditions). To give a solid explanation, it will probably be necessary
to carry out a thorough study of the stability of {\em non-conservative
shocks}, which is beyond the scope of this paper.
%-------------------------------------------------------------------------------
\section*{Acknowledgments}
The authors were supported by the French government, through the {\em {\sc uca-jedi} Investments in the Future} project managed by the National Research Agency (ANR) with the reference number ANR-15-IDEX-01. Praveen Chandrashekar's work is supported by the Department of Atomic Energy,  Government of India, under project no.~12-R\&D-TFR-5.01-0520. Boniface Nkonga's work is also supported by the INRIA associated Team AMFoDUC.  The authors thank the anonymous reviewers whose comments helped to improve the paper.
%-------------------------------------------------------------------------------
\bibliographystyle{siam}
\bibliography{bbibtex}
%-------------------------------------------------------------------------------
\appendix
%-------------------------------------------------------------------------------
\section{Numerical solution of root finding problem}
\label{sec:root}
The solution of \eqref{eq:HugoniotRP} is obtained numerically by applying a Newton method. The algorithm for the Newton method is as follows. Define $z=(z_{\lf},z_{\rg})$ and $H(z) = [F(z), G(z)]^\top$.  Set the tolerance $\epsilon = 10^{-6}$.  We start at the point $z=(1,1)$.
\begin{enumerate}
\item If $|F(z)| < \epsilon$ and $|G(z)| < \epsilon$, then stop.
\item Solve $H'(z) \Delta z = - H(z)$
\item Set $z_{\lf} = z_{\lf} + \frac{1}{2^n} \Delta z_{\lf}$ with smallest $n \in \{ 0,1,2,\ldots\}$ such that $z_{\lf} \in (0,2)$.
\item Set $z_{\rg} = z_{\rg} + \frac{1}{2^n} \Delta z_{\rg}$ with smallest $n \in \{ 0,1,2,\ldots\}$ such that $z_{\rg} \in (0,2)$.
\item Go to Step 1
\end{enumerate}
%-------------------------------------------------------------------------------
\end{document}